\newtheorem{Theorem}{Theorem}[section]
\newtheorem{Proposition}{Proposition}[section]
\newtheorem{Lemma}{Lemma}[section]
\newtheorem{Remark}{Remark}[section]
\newtheorem{Definition}{Definition}[section]
\newcommand{\f}{\frac}
\newcommand{\1}{\mathbbm 1}
\newcommand{\p}{\partial}
\def\R{\mathbb{R}}
\def\NN{\mathbb{N}}
\def\N{\mathbb{N}}
\def\eps{\varepsilon}
\def\ep{\varepsilon}
\newcommand{\comJuan}[1]{\textcolor{black}{#1}}
\begin{document}

\title{Long-time asymptotics for polymerization models}

\author{Juan Calvo\footnote{{Departamento de Matem\'atica Aplicada and Excellence Research Unit ``Modeling Nature'' (MNat), Universidad de Granada}, 18071 Granada, Spain. Email adress: juancalvo@ugr.es}
\and Marie Doumic\footnote{Sorbonne Universit\'{e}, Inria, Universit\'{e} Paris-Diderot, CNRS,  Laboratoire Jacques-Louis Lions, F-75005 Paris, France. Email adress: marie.doumic@inria.fr}
\and Beno\^\i t Perthame\footnote{Sorbonne Universit\'{e}, Universit\'{e} Paris-Diderot, CNRS, Inria, Laboratoire Jacques-Louis Lions, F-75005 Paris, France. Email adress: benoit.perthame@upmc.fr}
}

 \maketitle
\begin{abstract} This study is devoted to the long-term behavior of nucleation, growth and fragmentation equations, modeling the spontaneous formation and kinetics of large polymers in a spatially homogeneous and closed environment. Such models are, for instance, commonly used in the biophysical community in order to model \emph{in vitro} experiments of fibrillation. We investigate  the interplay between four  processes: nucleation, polymerization, depolymerization and fragmentation.
We first revisit the well-known Lifshitz-Slyozov model, which takes into account only polymerization and depolymerization, and we show that, when nucleation is included, the system goes to a trivial equilibrium: all polymers fragmentize, going back to very small polymers. Taking into account only polymerization and fragmentation, modeled by the classical growth-fragmentation equation, also leads the system to the same trivial equilibrium, whether or not nucleation is considered.  Finally, when taking  into account a depolymerization reaction term, { we prove the existence of}  a steady size-distribution of polymers, as soon as polymerization dominates depolymerization for large sizes whereas depolymerization dominates polymerization for smaller ones - a case which
 fits the classical assumptions for the Lifshitz-Slyozov equations, but complemented with fragmentation so that  "Ostwald ripening" does not happen.
\end{abstract}

\noindent {\bf AMS Class. No.}  35B40; 35R09; 35Q92; 82D60
\\
\noindent {\bf Keywords.}  Lifshitz-Slyozov, Becker-D\"oring, growth-fragmentation, nucleation, long-time behavior,  entropy inequality. 

\section*{Introduction}

\subsection*{Framework and model}
The formation of large aggregates, polymers or fibrils, out of monomeric units, is a phenomenon of key importance in many application areas, from amyloid diseases to industrial processes. 
When the average size of polymers or aggregates is very large, a size-continuous framework is relevant and this is our framework: in the following, we denote $u(t,x)$ the concentration of polymers of size $x>0$ at time $t,$ and $V(t)$ the concentration of monomers at time $t$.

Assuming a closed and space-homogeneous environment - which is for instance the case for \emph{in vitro} experiments~\cite{Radford} -  the total mass needs to be conserved, \emph{i.e.}, we enforce the following equality:
\begin{equation}\label{eq:mass}
V(t) +\int_0^\infty x u(t,x) dx =V(0) +\int_0^\infty x u(0,x) dx  :=M>0, \qquad  \forall t\geq 0.
\end{equation}
Here, we have denoted by $M$ the total mass of monomers, present either under the monomeric form - the concentration $V(t)$ - or within polymers.
Note that $V$ is not directly homogeneous to $u,$ rather to $x u(t,x)dx;$ see e.g.~\cite{Collet,Doumic2009,Prigent} for further explanations  on the relation to a physical quantity appearing more explicitly in discrete models.

To describe the kinetics of polymers and monomers in an environment where polymers are too dilute to interact (no coagulation~\cite{EscoMischler2}), one classically considers four main reactions~\cite{Bishop}.
\begin{enumerate}
\item {Nucleation} is the formation of polymers out of momomers, by the spontaneous aggregation of monomers into a first stable - very small - polymer, called the \emph{nucleus}. Generally occurring with a very low rate, this reaction is of key importance in experiments where there are initially only monomers, but  becomes negligible as soon as enough polymers are formed, so that the other reactions dominate.
\item Polymerization is the growth in size of polymers by monomer addition. It is called a second-order reaction since the law of mass action assumes that it depends on the product of the concentration of monomers $V(t)$ by the concentration of polymers $u(t,x).$
\item Depolymerization is the decay in size of polymers by monomer loss (first-order reaction),
\item Fragmentation is the breakage of polymers into smaller polymers (first-order reaction).
\end{enumerate}
In the present study, we address  the question of the long-time behavior of models combining some or all of these mechanisms, with one main question underlying our study: is there a stable distribution of polymers, or do they 
dissociate into monomers? To give a comprehensive view on the interplay between these four reactions without burdening the text with technical details, we simplified the assumptions on the coefficients, which are designed to 
represent typical features rather than to be optimal.

To take into account polymerization, depolymerization and fragmentation,
a framework equation satisfied by the concentration of polymers $u(t,x)$ is
\begin{equation}
\label{eq:frame}
\begin{array}{l}
\displaystyle \frac{\partial u}{\partial t} + \frac{\partial}{\partial x} \big((V(t)-d(x)) u \big)+ B(x)u(t,x) =2  \int_x^\infty B(y) \kappa (y,x) u(t,y)\, dy,
\\  \\
u(0,x)=u_0(x) \geq 0.
\end{array}\end{equation}
Here, we  assume, for the sake of simplicity, a constant polymerization rate taken equal to $1$,  the corresponding term in the equation is $\f{\p}{\p x} (V(t) u(t,x))$. The depolymerization rate is denoted $d(x)\geq 0,$ whereas $B(x)\geq 0$ is the total fragmentation rate, and $\kappa(y,x)$ the fragmentation kernel, \emph{i.e.}, the probability measure on $[0,y]$ for polymers of size $y$ to give rise to polymers of size $x\leq y.$ 

If $V(t)-d(0)>0,$ a boundary condition at $x=0$ is needed. Therefore, we state
\begin{equation}
\label{bcB}
\big(V(t) -d(0) \big)u(t,0)\1_{V(t)-d(0)>0}=\varepsilon V(t)^{i_0} \1_{V(t)-d(0)>0},\quad i_0\ge 1,
\end{equation}
where $\1$ denotes the Heaviside function, $\varepsilon=0$ in the absence of nucleation and $\varepsilon=1$ to model the nucleation reaction. In this last case,  the nucleation reaction rate is taken to be $1,$ and $i_0\in \N^*$ represents the size of the nucleus, see~\cite{Prigent}.

To complement the model~\eqref{eq:frame}-\eqref{bcB}, we can either use the mass conservation~\eqref{eq:mass} or equivalently - as soon as all terms may be defined in 
appropriate spaces - by the following equation for the concentration of monomers $V(t)$:
\begin{equation}
\label{VeqA}
\frac{dV}{dt}= - V(t) \int_{0}^\infty  u(t,x) \, dx +\int_{0}^\infty d(x)u(t,x)dx, \qquad V(0)= V_0 \geq 0.
\end{equation}

\subsection*{Link with other equations}

The framework~\eqref{eq:mass}--\eqref{VeqA} embeds { as particular cases several instances of} two well-known models, the Lifshitz-Slyozov system and the non-linear growth-fragmentation equation. We review them briefly { and explain why we need to combine both in a suitable way to get reasonable models for large aggregates formation.}

\paragraph{Link with the Lifshitz-Slyozov system.}
First, when $B\equiv 0$ and $\ep=0$, \emph{i.e.} in the absence of fragmentation and nucleation, the system \comJuan{resembles} the well-known Lifshitz-Slyozov system~\cite{LifshitzSlyozov_1961}. Traditionally designed to model phase transition, the assumptions on the polymerization rate (here taken equal to $1$ 
) 
and the depolymerization rate $d(x)$ are such that no boundary condition at $x=0$ { is} required, the flux at zero being always going outward. Moreover, one of the key assumptions for phase transition models is that for large sizes, { polymerization} dominates { depolymerization}, { whereas for small sizes it is the reverse,} leading to larger and larger particles in smaller and smaller number, a phenomenon called "Ostwald ripening", see e.g.~\cite{FL3,LifshitzSlyozov_1961,Niethammer}. 

{ The original model for phase transitions by Lifshitz and Slyozov considers a polymerization rate  proportional to $x^{\f{1}{3}}$ together with a constant depolymerization rate~\cite{LifshitzSlyozov_1961}. It has been noted that for more general rate functions the dynamics of the size distribution of the clusters/polymers is driven by the size-dependency of the ratio}
$$
\f{\text{polymerization rate 
}
}{{\text{depolymerization rate}}}(x).
$$
{Therefore, the assumption "polymerization dominates depolymerization for large sizes, depolymerization dominates polymerization for the small sizes" can be conveniently phrased in terms  of the above ratio being (strictly) decreasing as a function of the size $x$. Note that in our case of constant polymerization rate this reduces to}
  \begin{equation}\label{as:d:dec}
d\in {\cal C}^1 (\R_+) \text{ is strictly decreasing}.
\end{equation}

Under this { monotonicity} assumption, it may be proved that either all the polymers depolymerize and $V(t)$ tends to $M$ (this is the case if there are not enough polymers initially, \emph{i.e.} $M<d(0)$), or $V(t)$ tends to $0$, the quantity of polymers $\int_0^\infty u(t,x)dx$ also tends to $0$, and the average size tends to infinity: for large times, all particles tend to be aggregated into only one cluster of infinite size. For finer results, we refer for instance to~\cite{NP4,NP2,NV1}.

\paragraph{Link with the growth-fragmentation equation.} When $d(x)\equiv 0$, \emph{i.e.}, if we neglect the depolymerization reaction, Equation~\eqref{eq:frame} turns out to be a non-linear case of the well-known growth-fragmentation equation, { which has been notoriously used, among other applications: (i) to describe cell division, (ii) as the underlying structure for the  
 so-called "prion model".}
 
{ It is well known that for the linear growth-fragmentation equation (\emph{i.e.} when $V$ is taken constant and $d=0$ in~\eqref{eq:frame}), the size distribution converges to the dominant eigenvector  and the population grows exponentially, with a rate of growth given by the dominant eigenvalue. This can be shown \emph{e.g.}  by means of generalized relative entropy techniques; we refer the interested reader to \cite{BP}. When adding the coupling with the monomers concentration $V$, our model may be seen as a variant of the so-called  prion model, introduced in~\cite{Greer} and  studied in~\cite{CL2,Pruss}. However,} this prion model describes \emph{in vivo} systems: nucleation is not considered, and monomers are permanently produced and degraded. In this way  the system is not closed: the conservation of mass~\eqref{eq:mass} is replaced by the following mass balance law
\begin{equation}\label{eq:mass:prion}
\f{d}{dt}\left( V(t) +\int_0^\infty x u(t,x) dx \right)=-\gamma V +\lambda, \qquad t\geq 0,
\end{equation}
for two constants $\lambda>0$ and $\gamma>0$ which represent respectively the production and degradation rates of monomers.
In that case, it has been proved that a non-trivial equilibrium solution $(\bar V, U(x))$  to~\eqref{eq:frame}, \eqref{bcB}, \eqref{eq:mass:prion} may exist,  with $d\equiv 0$ and $\ep=0,$ and this state is attractive under some assumptions on the coefficients~\cite{Gabriel_2015,Pruss}. Note however that the problem of convergence towards the equilibrium for general coefficients remains open.  

{ \paragraph{Combined framework: new possibilities.}

The Lifshitz-Slyozov framework as described above does not fit well to the application field we have in mind: for instance, amyloid fibrils  remain numerous instead of clustering all together, and their size, though very large, remains finite. One possibility in order to remedy this situation is 
to change the assumption~\eqref{as:d:dec}. We shall analyze this below; more precisely, we will be concerned with 
%
 %
%
the following specific form of growth:}
\begin{equation}
\label{as:d:inc}
d\in{\cal C}^1(\R_+,\R_+), \quad {\exists}\;\alpha, \beta >0,\quad\quad 0<\alpha \le d'(x)\le \beta.
\end{equation}

{ Our analysis in Section \ref{sec:LS} will show that we do not get a satisfactory picture in this way. Another possibility to fix this situation is to add an extra reaction - nucleation or fragmentation - in the system, be it in combination with \eqref{as:d:dec} or \eqref{as:d:inc}. This leads eventually to the analysis of augmented versions of the growth-fragmentation framework (and more specifically that of the prion model) discussed right above. However,  the context of this paper is different than that of the standard prion model:  we assume mass conservation, that means $\lambda=\gamma=0$ in Equation~\eqref{eq:mass:prion}, which corresponds for instance to \emph{in vitro} experiments}. One can see in Equation~\eqref{VeqA} that in the absence of depolymerization the quantity of monomers can only decrease, being consumed by polymerization. But if $V(t)$ goes to zero, then the polymerization rate vanishes in Equation~\eqref{eq:frame}, so that for large times it is close to the pure fragmentation equation, for which it is well-known that $u(t,x)$ tends to a Dirac mass at $0$, all polymers  become infinitely small. { These qualitative considerations are made rigorous in Theorem~\ref{th:probB:asymp2:2}.}

{ Such "instability" of polymers, \emph{i.e.} the fact that their sizes, after having increased by nucleation and growth-fragmentation, would go back to dusts, }does not correspond to experimental observations \comJuan{\cite{Radford}}. { T}he prion model thus needs to be enriched by another reaction in order to obtain a steady distribution of polymers. { Our contribution in this paper is to augment the prion model with a depolymerization reaction, which can be thought of as a combination of both Lifshitz-Slyozov and growth-fragmentation frameworks.}

\

The paper is organized as follows. In the next section, we state our main results and give a rigorous meaning to the previous  qualitative considerations. The subsequent sections are used to prove these results.

\section{Main results}

\subsection{Notations and framework assumptions}
\label{sec:amr}
Throughout the paper, the domain for both the space and time variables is $\R_+:=[0,\infty)$. In case of possible ambiguity, subscripts are used  to denote the functional spaces and indicate to which variable $x$ or $t$ they refer to. For instance, $L_t^1$ is the space of Lebesgue integrable functions of time defined over $\R_+$ and similarly for $L_x^1$; note that we omit the base set, which by default is $\R_+$ unless otherwise stated. Similarly, standard Sobolev spaces are denoted as $W_t^{1,1},\, W_x^{1,2}$ and so forth. Sometimes we  use weighted spaces like $L^1 \big(\R_+, (1+x^2) dx \big)$ with obvious notations, or for instance $L^1 \big(\R_+, (1+x^2) dx \big)^+$ for the cone of nonnegative functions of $L^1 \big(\R_+, (1+x^2) dx \big)$. We also make use of spaces of point-wise defined functions, ${\cal C}(\R_+)$ for continuous functions, ${\cal C}^1(\R_+)$ for differentiable functions with continuous derivatives and ${\cal C}^1_b(\R_+)$ for differentiable functions with continuous and bounded derivative.

We recall that the p-Wasserstein distance between two probability measures $\mu, \nu$  (or two nonnegative bounded measures with the same total mass)  is defined as
$$
W_p(\mu,\nu):= \left(\inf_{\eta \in \Gamma (\mu,\nu)} \int_{(0,\infty)^2} |x-y|^pd\eta (x,y)\right)^{1/p}\,, \quad 1\le p <\infty,
$$
with $\Gamma (\mu,\nu)$ the set of measures on $(0,\infty)^2$ with marginals $\mu$ and $\nu$. Note that
$$
W_p(\mu,\nu) \le \left(\int_{(0,\infty)^2} |x-y|^p d\mu(x)d\nu(y)\right)^{1/p}.
$$
\noindent
We define the moments of $u(t,\cdot)\in L_x^1(0,\infty)$ as
\begin{equation}\label{def:rho}
M_{n}(t):= \frac{1}{n}\int_0^\infty |x|^n u(t,x)\, dx,\quad n>0, \qquad \rho(t):=\int_0^\infty u(t,x)dx.
\end{equation}
Notations $M$ for the total mass defined in~\eqref{eq:mass} and $\rho$ for the total number of polymers are used throughout the document (not to be confused with $M_n$).

\medskip

We notice that a boundary condition is needed for \eqref{eq:mass}--\eqref{eq:frame} to be well posed only when $V(t) >d(0)$; we have stated~\eqref{bcB} in a way that makes sense even when no boundary condition is needed. This also means that a minimal number of monomers is needed for nucleation reactions to take place under our current formulation. We thus require the initial number of monomers to be large enough so that we avoid trivial dynamics,
\begin{equation}
\label{V_init}
V_0>d(0) \geq 0.
\end{equation}
Under Assumption~\eqref{as:d:inc}, there is no loss of generality in assuming that \eqref{V_init} holds, as the following proposition shows.
\begin{Proposition}
\label{pr:marginal_cases}
Let $(V,u)\in {\cal C}_b^1(\R_+)\times {\cal C}\left(\R_+,L^1\left((1+x^2)dx\right)\right)$ be any nonnegative solution of ~\eqref{eq:mass} and \eqref{VeqA} such that the initial datum verifies $V_0 <d(0)$. Assume that $d\in {\cal C}^1(\R_+)^+$ satisfies \eqref{as:d:inc}. Then the following statements hold true:
\begin{enumerate}
\item If $M>d(0)$  there is a $t^*\in (0,\infty)$ such that $V(t^*)=d(0)$ and $\frac{d V}{dt}(t^*)>0$.
\item If $M\le d(0)$ then $\vert M-V (t) \vert \leq(M-V_0)e^{-\alpha t}$.
\end{enumerate}
\end{Proposition}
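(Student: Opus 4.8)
The plan is to reduce everything to the scalar ODE~\eqref{VeqA}, using the mass identity~\eqref{eq:mass} to control the integral terms. First I would recall $\rho(t)=\int_0^\infty u(t,x)\,dx\ge 0$ and split the depolymerization rate as $d(x)=d(0)+\tilde d(x)$ with $\tilde d(x):=\int_0^x d'(s)\,ds$, so that assumption~\eqref{as:d:inc} yields the two-sided bound $\alpha x\le\tilde d(x)\le\beta x$. Plugging this into~\eqref{VeqA} rewrites it as
\[
\frac{dV}{dt}=\rho(t)\bigl(d(0)-V(t)\bigr)+\int_0^\infty\tilde d(x)\,u(t,x)\,dx .
\]
Mass conservation~\eqref{eq:mass} gives $\int_0^\infty x\,u(t,x)\,dx=M-V(t)$, which is nonnegative since $u\ge 0$; hence $V(t)\le M$ for all $t$, and $\alpha\bigl(M-V(t)\bigr)\le\int_0^\infty\tilde d\,u\,dx\le\beta\bigl(M-V(t)\bigr)$. (All integrals are finite: $u\in\mathcal{C}(\R_+,L^1((1+x^2)dx))$ and $d$ grows at most linearly by~\eqref{as:d:inc}.) These two observations are the whole engine of the proof.

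For statement~(2), since $M\le d(0)$ we automatically have $V(t)\le M\le d(0)$, so the first term in the rewritten ODE is $\ge 0$; discarding it and using the lower bound above gives $\frac{dV}{dt}\ge\alpha\bigl(M-V(t)\bigr)$, i.e.\ $\frac{d}{dt}(M-V)\le-\alpha(M-V)$ with $M-V\ge 0$. Gr\"onwall's lemma then yields $0\le M-V(t)\le(M-V_0)e^{-\alpha t}$, which is the claimed inequality (the absolute value being redundant).

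For statement~(1), set $t^*:=\inf\{t>0:\ V(t)\ge d(0)\}$. Wherever $V(t)<d(0)$, the same manipulation gives $\frac{dV}{dt}\ge\alpha\bigl(M-V(t)\bigr)\ge\alpha\bigl(M-d(0)\bigr)>0$, using $V\le M$ and $M>d(0)$. Were $t^*=+\infty$, this lower bound would force $V(t)\ge V_0+\alpha(M-d(0))\,t\to\infty$, contradicting $V\le M$; so $t^*\in(0,\infty)$, with $t^*>0$ because $V_0<d(0)$ and $V(t^*)=d(0)$ by continuity of $V$. At $t=t^*$ the term $\rho(t^*)\bigl(d(0)-V(t^*)\bigr)$ vanishes, leaving $\frac{dV}{dt}(t^*)=\int_0^\infty\tilde d(x)\,u(t^*,x)\,dx\ge\alpha\bigl(M-V(t^*)\bigr)=\alpha\bigl(M-d(0)\bigr)>0$. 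The only genuinely load-bearing idea — and hence the point I would be most careful about — is that the possibly troublesome term $\rho(d(0)-V)$ has a favorable sign while $V\le d(0)$ and vanishes \emph{exactly} at the crossing time $t^*$, which is what promotes $\frac{dV}{dt}(t^*)\ge 0$ to the strict inequality required in~(1); the remaining points (that $t^*$ is a true first crossing, and that the integral identities may be used pointwise in $t$) follow immediately from the assumed regularity.
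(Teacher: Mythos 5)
Your proof is correct and takes essentially the same route as the paper: both exploit $d(x)\ge d(0)+\alpha x$ from \eqref{as:d:inc} together with mass conservation \eqref{eq:mass} to get $\frac{dV}{dt}\ge \alpha\bigl(M-V(t)\bigr)+\rho(t)\bigl(d(0)-V(t)\bigr)$, then apply Gr\"onwall while $V\le d(0)$. Your write-up is merely a bit more explicit than the paper's about the first crossing time $t^*$ and why the derivative there is strictly positive, but the underlying argument is identical.
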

\begin{proof} Using \eqref{VeqA},  \eqref{as:d:inc} and the mass conservation~\eqref{eq:mass}, we have 
$$
\frac{dV}{dt} \ge \int_0^\infty (d(0)+\alpha x -V(t))u(t,x)\, dx=\alpha (M-V(t)) + \left(d(0)-V(t)\right) \rho(t).
$$
Hence, we may write 
$$
\frac{dV}{dt} \ge \alpha (M-V(t)) \quad \mbox{as long as}\quad V(t) \leq d(0)
$$
which implies
$$
V(t) \ge M + (V_0-M) e^{-\alpha t} \quad \mbox{as long as}\quad V(t) \leq d(0).
$$
Therefore, if $M>d(0)$ we are able to find some $0<t^*<\infty$ such that $V(t^*)= d(0)$ and $\frac{dV(t^*)}{dt} >0$. If $M\le d(0)$ then by mass conservation again we deduce that $\lim_{t\to \infty} V(t) = M$.
\end{proof}
The former result means that when depolymerization rates increase with size, if we start with a low monomer number, either we fall into the regime given by \eqref{V_init} in finite time or the dynamics is somewhat trivial: the polymerized mass vanishes completely on the long time run. These considerations play no role when depolymerization rates decrease with size, as we  see it in Section~\ref{sec:steady}.

Under the assumptions of our study, we take for granted that, for $u_0\in L^1 \big(\R_+, (1+x^2) dx \big)^+$ and  $V_0\ge 0$, there exists a  nonnegative weak solution $(V,u)\in  {\cal C}^1 (\R_+) \times {\cal C}\big(\R_+,L^1((1+x^2)dx)\big)$ to the system~\eqref{eq:mass}, \eqref{eq:frame} and \eqref{bcB}. Such a result is a work in preparation by J. Calvo - \comJuan{see also \cite{Deschamps2017}}; it may be obtained along the lines of \cite{Collet2000, Laurencot2001, Laurencot2007, Simonett}.
%
This framework allows us to state the main results of the document. Their guideline is the following: considering some or all of the four main reactions described in the introduction - nucleation, polymerization, depolymerization and fragmentation - and modeled by the framework system~\eqref{eq:mass}, \eqref{eq:frame}, \eqref{bcB}, what is the asymptotic behavior of the polymers and monomers $\left(u(t,x),V(t)\right)$? Which reaction rates may lead toward a steady size distribution of monomers?

\subsection{Lifshitz-Slyozov revisited}
\label{sec:main_results}

First, we concentrate on polymerization and depolymerization reactions, \emph{i.e.} the Lifshitz-Slyozov system, when $B\equiv 0$ in~\eqref{eq:frame} and $\ep=0$ in~\eqref{bcB}. This gives the system

\begin{equation}
\label{LS} 
\left\{
\begin{array}{ll}
&\frac{dV}{dt}= - V(t) \int_{0}^\infty  u(t,x) \, dx +\int_{0}^\infty d(x)u(t,x)dx, \qquad V(0)= V_0 ,
\\ \\ 
&\frac{\partial u}{\partial t} + \frac{\partial}{\partial x} ((V(t)-d(x)) u) =0,\qquad u(0,x)=u_0(x), \\ \\
&\big(V(t) -d(0) \big)u(t,0)\1_{V(t)-d(0)>0}=0.
\end{array}\right.
\end{equation}
Under the assumption~\eqref{as:d:dec} of a decreasing depolymerization rate, we refer to~\cite{Niethammer,NP4,NP3} for results showing the "Ostwald ripening" if $M>d(0)$. In this section, we explore the reverse case to investigate the possibility of  a {steady} distribution, hence we work under  the reverse assumption~\eqref{as:d:inc} of an increasing depolymerization reaction.

The equation for $u$ in~\eqref{LS} is a non-linear transport equation, which asymptotic is closely related to the characteristic curves as classically defined below.
\begin{Definition} [Characteristics]
Given $z\in [0,\infty)$, let $X :[0,\infty)^2 \rightarrow [0,\infty)$ be the ${\cal C}^1$ solution  of 
$$
\frac{d}{dt}X(t,z) = V(t)-d(X(t,z)),\quad X(0,z)=z.
$$
\label{def:Characteristics}
\end{Definition}
\begin{Remark}
Using \eqref{as:d:inc} we get
$$
\frac{d X(t,z)}{dt} \le V(t) -\alpha X(t,z),
$$
which implies that the characteristics always remain bounded:
$$
X(t,z)\le e^{-\alpha t} \left(z+\int_0^t V(\tau)e^{\alpha \tau}\, d\tau \right) \le  z + \f{M}{\alpha}.
$$
\label{rm:char_bound}
\end{Remark}

Using the characteristic curves, we obtain the following asymptotic result.
\begin{Theorem} [Lifshitz-Slyozov system: concentration at critical size]

Let $d$ satisfy~\eqref{as:d:inc}, $V_0$ satisfy \eqref{V_init} and $u_0\in L^1(\R_+,(1+x^2)dx)^+$ with $\rho_0=\int_0^\infty u_0(x)dx>0$.  Let  $M$ be defined by~\eqref{eq:mass}.

\
There exists a unique solution $\bar x>0$ to the equation
\begin{equation}\label{eq:asymp}
M=\rho_0 \bar x + d(\bar x),
\end{equation}
and the  solution $(V,u)\in {\cal C}^1(\R_+)\times {\cal C} \big(\R_+,L^1((1+x^2)dx) \big)$ to the Lifshitz-Slyozov system~\eqref{LS}
satisfies 
\begin{enumerate}
\item For all $z\geq 0,$ 
$$\int_0^\infty \vert X(t,z)-x\vert^2 u(t,x) dx \leq e^{-2\alpha t} \int_0^\infty \vert z-x\vert^2 u_0(x) dx, $$
\label{th:probA:asymp1:1}
\item $\lim_{t\to \infty} V(t)=\bar V:=d(\bar x)$, 
\label{xbar:1} $\qquad \forall\; z\geq 0,\quad \lim_{t\to \infty} X(t,z) = \bar x$, 
\item $u(t,x)$ converges to $\rho_0 \delta_{\bar x}$  exponentially fast in the sense of the Wasserstein distance: for some constant $C>0$ we have
$$
W_2(u(t,\cdot),\rho_0 \delta_{\bar x}) \leq C e^{-\alpha t},\qquad \vert V(t)-d(\bar x)\vert \leq C e^{-\alpha t}.
$$
\label{xbar:3}
\end{enumerate}
\label{th:probA:asymp1}
\end{Theorem}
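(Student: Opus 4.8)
The plan is to derive everything from a single quadratic a priori estimate along characteristics, after first pinning down the boundary behaviour. I would begin with two preliminary facts. Existence and uniqueness of $\bar x>0$ in \eqref{eq:asymp} is immediate: $g(x):=\rho_0 x+d(x)$ has $g'(x)\ge\rho_0+\alpha>0$ by \eqref{as:d:inc}, while $g(0)=d(0)<V_0\le M$ by \eqref{V_init} and \eqref{eq:mass} (since $u_0\ge0$), and $g(x)\to\infty$, so $g(\bar x)=M$ has a unique root. The second, more substantial, fact is that $V(t)>d(0)$ for all $t\ge0$: integrating $d'\ge\alpha$ gives $d(x)\ge d(0)+\alpha x$, so by \eqref{VeqA} and \eqref{eq:mass} one has $\frac{dV}{dt}\ge(d(0)-V(t))\rho(t)+\alpha(M-V(t))$, which is $\ge\alpha(M-d(0))>0$ whenever $V(t)\le d(0)$; since $V_0>d(0)$ and $V$ is continuous, a first crossing time is impossible. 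Consequently the boundary flux in \eqref{LS} vanishes identically, forcing $u(t,0)=0$; no characteristic issued from $z>0$ ever reaches $x=0$; $\rho(t)\equiv\rho_0$; and $u(t,\cdot)\,dx$ is the push-forward of $u_0\,dx$ by the flow $X(t,\cdot)$ of Definition \ref{def:Characteristics}.

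For item \ref{th:probA:asymp1:1}, I would differentiate $Q_z(t):=\int_0^\infty|X(t,z)-x|^2u(t,x)\,dx$. Using the transport equation in \eqref{LS}, an integration by parts whose boundary term at $x=0$ is $|X(t,z)|^2(V(t)-d(0))u(t,0)=0$ and which decays at $+\infty$ thanks to the $L^1((1+x^2)dx)$ control, and the ODE defining $X(t,z)$, the common transport speed $V(t)$ cancels and one is left with $\frac{d}{dt}Q_z(t)=2\int_0^\infty(X(t,z)-x)\big(d(x)-d(X(t,z))\big)u(t,x)\,dx$. Writing $d(x)-d(X(t,z))=d'(\xi)(x-X(t,z))$ with $d'(\xi)\ge\alpha$ gives $(X(t,z)-x)\big(d(x)-d(X(t,z))\big)\le-\alpha|X(t,z)-x|^2$, hence $\frac{d}{dt}Q_z(t)\le-2\alpha\,Q_z(t)$, and Gronwall yields the stated bound. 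This estimate is the engine of the whole proof.

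The passage to items \ref{xbar:1} and \ref{xbar:3} is then a perturbative ODE argument. Set $\bar X(t):=\rho_0^{-1}\int_0^\infty x\,u(t,x)\,dx=\rho_0^{-1}(M-V(t))$ by \eqref{eq:mass}. The elementary decomposition $\int|x-a|^2u\,dx=\mathrm{Var}(t)+\rho_0|a-\bar X(t)|^2$, where $\mathrm{Var}(t):=\int|x-\bar X(t)|^2u(t,x)\,dx$, applied with $a=X(t,0)$ together with item \ref{th:probA:asymp1:1} gives both $\mathrm{Var}(t)\le Ce^{-2\alpha t}$ and $|X(t,z)-\bar X(t)|^2\le Ce^{-2\alpha t}$ for each fixed $z$. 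Then \eqref{VeqA} rewrites as $\frac{dV}{dt}=\rho_0\big(d(\bar X(t))-V(t)\big)+R(t)$ with $R(t)=\int_0^\infty(d(x)-d(\bar X(t)))u\,dx$, so by Cauchy--Schwarz and $d'\le\beta$ one gets $|R(t)|\le\beta\sqrt{\rho_0\,\mathrm{Var}(t)}\le Ce^{-\alpha t}$. Substituting $\bar X(t)=\rho_0^{-1}(M-V(t))$, the unperturbed vector field $F(V):=\rho_0\big(d(\rho_0^{-1}(M-V))-V\big)$ satisfies $F(\bar V)=0$ for $\bar V:=d(\bar x)$, precisely by \eqref{eq:asymp}, and $F'(V)=-d'(\rho_0^{-1}(M-V))-\rho_0\le-(\alpha+\rho_0)<0$. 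Hence $\frac{d}{dt}|V(t)-\bar V|\le-(\alpha+\rho_0)|V(t)-\bar V|+|R(t)|$, and Gronwall gives $|V(t)-\bar V|\le Ce^{-\alpha t}$, the rate $\alpha$ surviving because $\alpha+\rho_0>\alpha$. Thus $\bar X(t)=\rho_0^{-1}(M-V(t))\to\rho_0^{-1}(M-\bar V)=\bar x$ with rate $\alpha$, so $X(t,z)\to\bar x$, which is item \ref{xbar:1}. Finally, since $\rho_0\delta_{\bar x}$ is concentrated at one point the only transport plan is trivial, so $W_2(u(t,\cdot),\rho_0\delta_{\bar x})^2=\int_0^\infty|x-\bar x|^2u(t,x)\,dx=\mathrm{Var}(t)+\rho_0|\bar X(t)-\bar x|^2\le Ce^{-2\alpha t}$, which with the bound on $|V(t)-d(\bar x)|$ is item \ref{xbar:3}.

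I expect the only genuinely delicate point to be the coupling in the last step: one must check that the error $R(t)$ inherited from the concentration estimate does not degrade the exponential rate (it does not, since $\alpha+\rho_0>\alpha$), and one must justify the comparison argument for $V$, e.g. by working with $|V-\bar V|^2$ to sidestep the non-differentiability of $|V-\bar V|$ at its zero set. The preliminary step $V(t)>d(0)$ is short but essential: without it, $u(t,0)=0$, mass conservation $\rho(t)\equiv\rho_0$, and the validity of the integration by parts in item \ref{th:probA:asymp1:1} would all fail. Everything else is routine once the a priori estimate of item \ref{th:probA:asymp1:1} is in hand.
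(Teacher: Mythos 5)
Your proposal is correct, and its second half takes a genuinely different route from the paper. Your item~\ref{th:probA:asymp1:1} and the preliminary bound $V(t)>d(0)$ coincide with the paper's Lemma on a priori properties and with the first entropy inequality (the functional $g(t,z)=\int u|X-x|^2dx$ with $\frac{d}{dt}g\le -2\alpha g$), computed exactly as you do. For items~\ref{xbar:1} and~\ref{xbar:3}, however, the paper introduces a second entropy, $H_k(t)=\int k(x)u\,dx+K(V)$ with $k(x)=\int_0^x d$, adapted from Collet--Goudon--Vasseur, to get $\int u\,(V-d(x))^2dx\in L^1_t$, deduces $V(t)-d(X(t,z))\to0$, then identifies the limit through the decomposition $M=V(t)+\rho_0X(t,z)+B(t,z)$ with $|B|\le\sqrt{\rho_0 g(0,z)}\,e^{-\alpha t}$ and the strict monotonicity of $x\mapsto d(x)+\rho_0x$, and only afterwards extracts the exponential rate from a separate differential inequality for $|X(t,z)-\bar x|$. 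You bypass the second entropy entirely: mass conservation gives the mean $\bar X(t)=\rho_0^{-1}(M-V(t))$, the variance decomposition combined with item~\ref{th:probA:asymp1:1} controls both $\mathrm{Var}(t)$ and $|X(t,z)-\bar X(t)|$ by $Ce^{-\alpha t}$ (in the square), and $V$ then solves the perturbed scalar ODE $\frac{dV}{dt}=F(V)+R$ with $F(\bar V)=0$, $F'\le-(\alpha+\rho_0)$ and $|R|\le\beta\sqrt{\rho_0\,\mathrm{Var}(t)}\le Ce^{-\alpha t}$, which yields convergence and the rate $e^{-\alpha t}$ in one stroke; the Wasserstein bound then follows from the unique coupling with a Dirac. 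Both arguments are sound; yours is shorter, more self-contained, and produces the rates directly, while the paper's Lyapunov functional $H_k$ has independent interest precisely because it survives the nucleation boundary condition (it is stated for both \eqref{LS} and \eqref{LSN}), whereas your perturbed-ODE closure relies on $\rho(t)\equiv\rho_0$ and on the quadratic concentration estimate, both of which fail once nucleation is switched on. The points you flag yourself (justifying the boundary term at $x=0$ via $u(t,0)=0$, and handling $|V-\bar V|$ through $(V-\bar V)^2$ or a Dini derivative) are indeed the only places needing care, and they go through under the stated regularity.
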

The proof of Theorem~\ref{th:probA:asymp1} is detailed in Section~\ref{sec:LS}. It relies on two entropy inequalities, inspired by~\cite{Michel2008} and~\cite{Collet2002} respectively { (we call  \comJuan{ in the sequel} an "entropy" a functional which decreases along solutions {- i.e. a Lyapunov functional, } and not a physical entropy)}. This result shows that the solution of the Lifshitz-Slyozov system \comJuan{\eqref{LS}}, taken with the opposite assumption on the ratio polymerization/depolymerization as the standard case, converges to a singular steady state $(\bar V, \rho_0 \delta_{\bar x})$, with $\bar x$ such that the decreasing total growth/transport rate $\bar V-d(x)$ vanishes at $\bar x$, this rate being positive  for $x<\bar x$ and negative for $x>\bar x.$ 

Such a result is not
observed in experiments, because the Lifshitz-Slyozov equation is a first-order approximation of the "true" discrete system, the so-called Bekker-D\"oring system, see~\cite{Ball-Becker-Doring,BeckerDoring_1935}. At points like $\bar x,$ where the total transport rate vanishes, the second order correction, a diffusion term~\cite{ColletHariz}, would dominate, 
changing the steady state from a Dirac to a size-continuous distribution.

Let us now add a nucleation reaction in the system. Specifying $\ep=1$ in the boundary condition~\eqref{bcB}, we obtain the  system
\begin{equation}
\label{LSN} 
\left\{
\begin{array}{ll}
&\frac{dV}{dt}= - V(t) \displaystyle \int_{0}^\infty  u(t,x) \, dx +\int_{0}^\infty d(x)u(t,x)dx, \qquad V(0)= V_0 ,
\\ \\ 
&\frac{\partial u}{\partial t} + \frac{\partial}{\partial x} ((V(t)-d(x)) u) =0,\qquad u(0,x)=u_0(x), \\ \\
&\big(V(t) -d(0) \big)u(t,0)\1_{V(t)-d(0)>0}=V(t)^{i_0} \1_{V(t)-d(0)>0}.
\end{array}\right.
\end{equation}
Under the usual assumption~\eqref{as:d:dec} of a decreasing $d$, there would be no difference with the usual result of Ostwald ripening, because after a while $V(t)<d(0)$ and nucleation does not act  any longer. This is stated in the following theorem, still for increasing $d$ given by the assumption~\eqref{as:d:inc}.

\begin{Theorem} [Lifshitz-Slyozov system with nucleation]
\label{th:probB:asymp2}

With the assumptions and notations of Theorem~\ref{th:probA:asymp1}, let $i_0\in \N^*.$
The  solution $(V,u)\in {\cal C}^1(\R_+)\times {\cal C} \big(\R_+,L^1((1+x^2)dx)\big)$ of the Lifshitz-Slyozov system with nucleation~\eqref{LSN} satisfies
\begin{equation}\label{asymp:LSN}
\left\{ 
\begin{array}{l}
 \displaystyle \lim_{t\to\infty} \int_0^\infty u(t,x) dx=+\infty, \quad \lim_{t\to\infty} V(t)=d(0), 
\\ \\
\lim_{t\to \infty} x u(t,x) = \big(M-d(0)\big) \delta_0(x),  \qquad \text{weakly in measures}.
\end{array} 
\right.
\end{equation}
More precisely, we have the following convergence rates.
\begin{enumerate}

\item If $d(0)>0$, then 
$$\lim _{t\to\infty} \frac{\rho(t)}{t} = d(0)^{i_0}, \qquad \lim_{t\to \infty} t (V(t)-d(0)) = \f{d'(0)}{d(0)^{i_0}}(M-d(0)).$$
\label{th:pbB:extra_info:d(0)>0}
\item If $d(0)=0$, then 
$$
\lim_{t\to \infty} \frac{\rho(t)}{t^{\f{1}{i_0+1}}} =(1+i_0)^{\f{1}{i_0+1}}\left(d'(0)M\right)^{\f{i_0}{i_0+1}},
\quad 
 \lim_{t\to \infty} t^{\f{1}{i_0+1}} V(t) = \left(\frac{d'(0)M}{1+i_0}\right)^{\f{1}{i_0+1}}.
$$
\label{th:pbB:extra_info:d(0)=0}
%
%
\end{enumerate}
\end{Theorem}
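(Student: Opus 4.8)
The plan is to combine the entropy estimate of Theorem~\ref{th:probA:asymp1}(1) — which survives the presence of the nucleation flux since it comes from the transport structure, not from the boundary condition — with a careful analysis of the characteristics and of the ODE for $V$. First I would observe that with $\varepsilon=1$ the total number of polymers $\rho(t)$ is now fed by nucleation: from \eqref{VeqA} (valid with the nucleation flux contributing to $u(t,0)$) one gets $\frac{d\rho}{dt}=V(t)^{i_0}\mathbbm 1_{V(t)>d(0)}-\text{(loss terms that vanish in the limit)}$; more precisely, integrating the PDE one obtains $\frac{d\rho}{dt}=V(t)^{i_0}\mathbbm 1_{V>d(0)}-\lim_{x\to\infty}(V(t)-d(x))u(t,x)$, and under \eqref{as:d:inc} together with the moment bounds the boundary term at infinity vanishes, so $\frac{d\rho}{dt}=V(t)^{i_0}\mathbbm 1_{V>d(0)}\ge 0$. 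Thus $\rho$ is nondecreasing. I would then show $V(t)>d(0)$ for all $t$ (or all but a negligible initial interval): if ever $V(t)\le d(0)$, mass conservation \eqref{eq:mass} combined with \eqref{as:d:inc}, exactly as in the proof of Proposition~\ref{pr:marginal_cases}, gives $\frac{dV}{dt}\ge \alpha(M-V)+(d(0)-V)\rho>0$ since $M>d(0)$ is forced here (from $V_0>d(0)$ and the fact that $M\ge V_0$... more carefully: if $M\le d(0)$ then $V_0\le M\le d(0)$ contradicting \eqref{V_init}, so $M>d(0)$). Hence nucleation is always active.

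Next I would pin down the limit of $V$. Since $\rho(t)$ is nondecreasing and bounded by $M/\bar x_{\min}$-type arguments would fail (it is \emph{not} bounded — it must go to infinity), I instead argue: from mass conservation $V(t)+\int_0^\infty x u\,dx=M$ and the nonnegativity of the polymerized mass, $V(t)\le M$; combined with $V(t)>d(0)$, $V$ ranges in $(d(0),M]$. The key is that $\rho(t)\to\infty$: if $\rho$ stayed bounded then $V^{i_0}$ would be integrable in time, forcing $V(t)\to 0$ along a subsequence (since $\int^\infty V^{i_0}<\infty$ with $V$ roughly monotone-ish), but $V>d(0)\ge 0$; if $d(0)>0$ this is an immediate contradiction, and if $d(0)=0$ one needs a finer argument — here I would use the entropy/Wasserstein contraction from Theorem~\ref{th:probA:asymp1}(1) to show the polymer distribution concentrates near a moving point $X(t,z)$ and track the mass $\rho(t)X(t,z)\approx M-V(t)$. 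Granting $\rho(t)\to\infty$, the loss term $V\rho$ in \eqref{VeqA} would blow up unless $V\to d(0)$ (so that the gain $\int d(x)u\,dx\ge d(0)\rho$ compensates); quantitatively, $\frac{dV}{dt}=\rho(t)\big(\overline{d}(t)-V(t)\big)$ where $\overline d(t):=\frac1{\rho}\int d(x)u\,dx$ is a $\rho$-weighted average of $d$ over the support, and since the support concentrates (by the Wasserstein contraction) $\overline d(t)\to d(\bar x_\infty)$ for the limiting concentration point; balancing forces $V\to d(0)$ and hence the concentration point $\to 0$, giving $xu(t,x)\to (M-d(0))\delta_0$ weakly.

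For the sharp rates I would set up the coupled asymptotic system near the limit. Write $v(t):=V(t)-d(0)\to 0^+$ and let $y(t)$ denote the concentration point of the (normalized) measure $u(t,\cdot)/\rho(t)$, which by part (1) of Theorem~\ref{th:probA:asymp1} is well approximated by any characteristic $X(t,z)$ and satisfies $y(t)\to 0$. Mass conservation reads $v(t)+d(0)+\rho(t)y(t)(1+o(1))=M$, i.e. $\rho(t)y(t)\to M-d(0)$. The number equation is $\dot\rho=(d(0)+v)^{i_0}=d(0)^{i_0}+i_0 d(0)^{i_0-1}v+\cdots$. The characteristic equation gives $\dot y=v-d(y)+d(0)=v-d'(0)y+o(y)$, and near the concentration point $\dot y\approx 0$ (slow manifold), so $v\approx d'(0)y$, whence $\rho\, v\approx d'(0)\rho y\to d'(0)(M-d(0))$. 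In the case $d(0)>0$: $\dot\rho\to d(0)^{i_0}$ gives $\rho(t)/t\to d(0)^{i_0}$, and then $t(V-d(0))=tv\approx \frac{d'(0)(M-d(0))}{\rho}\sim \frac{d'(0)(M-d(0))}{d(0)^{i_0}t}\cdot t=\frac{d'(0)(M-d(0))}{d(0)^{i_0}}$. In the case $d(0)=0$: then $v\approx d'(0)y$ and $\rho y\to M$, so $v\approx d'(0)M/\rho$, and $\dot\rho=v^{i_0}\approx (d'(0)M)^{i_0}\rho^{-i_0}$, i.e. $\rho^{i_0}\dot\rho\approx (d'(0)M)^{i_0}$, giving $\rho^{i_0+1}\approx (i_0+1)(d'(0)M)^{i_0}t$, i.e. $\rho\sim\big((1+i_0)(d'(0)M)^{i_0}\big)^{1/(i_0+1)}t^{1/(i_0+1)}$, and then $V=v\approx d'(0)M/\rho\to 0$ with the stated rate $t^{1/(i_0+1)}V\to (d'(0)M/(1+i_0))^{1/(i_0+1)}$.

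The main obstacle will be rigorously justifying the ``slow manifold'' relation $v(t)\sim d'(0)y(t)$ and controlling the error terms uniformly: the heuristic $\dot y\approx 0$ must be replaced by genuine bounds showing $\dot y=o(v)=o(y)$, and one must transfer the concentration statement from the contraction inequality (which controls the second moment about a characteristic, not the mean of the normalized measure directly) into a usable estimate on the $\rho$-weighted average $\overline d(t)$ and on $\rho(t)y(t)$. Since $\rho\to\infty$ and $y\to 0$ with $\rho y$ bounded, the second moment $\int |X(t,z)-x|^2 u\,dx\le e^{-2\alpha t}\!\int|z-x|^2u_0$ divided by $\rho(t)$ gives variance $\lesssim e^{-2\alpha t}/\rho(t)\to 0$, so the normalized measure concentrates and $\overline d(t)=d(0)+d'(0)\,\overline y(t)+O(\text{variance}+\overline y^2)$ where $\overline y(t)$ is the $\rho$-weighted mean; plugging into $\dot V=\rho(\overline d-V)$ closes the loop. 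A secondary technical point is handling the two cases $d(0)>0$ and $d(0)=0$ with a single argument for the qualitative statement \eqref{asymp:LSN} before splitting for the rates, and making sure the vanishing of the flux at infinity (needed for $\dot\rho=V^{i_0}$) follows from the $(1+x^2)$-moment control propagated by the equation under \eqref{as:d:inc}, which is where Remark~\ref{rm:char_bound} and the standard moment a priori estimates enter.
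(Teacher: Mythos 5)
The central gap is that you build the whole quantitative argument on the contraction-along-characteristics estimate of Theorem~\ref{th:probA:asymp1}, part~1 (Lemma~\ref{lem:entropy1}), asserting it ``survives the presence of the nucleation flux since it comes from the transport structure, not from the boundary condition.'' It does not survive: the proof of that estimate integrates $\int_0^\infty \partial_x\big((d(x)-V)u\big)\,|X-x|^2\,dx$ by parts, and with $\varepsilon=1$ the boundary term $\big(V(t)-d(0)\big)u(t,0)\,X(t,z)^2=V(t)^{i_0}X(t,z)^2$ no longer vanishes and enters with the unfavorable sign, so one only gets $\frac{dg}{dt}\le -2\alpha g+V^{i_0}X^2$ and the exponential decay is lost (the paper states explicitly that the entropy inequality of Lemma~\ref{lem:entropy1} fails for system~\eqref{LSN}; only the second entropy, Lemma~\ref{lem:entropy2}, survives). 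Since your identification of $\lim V$ through the concentration of the weighted average $\overline d(t)$, your variance bound $\lesssim e^{-2\alpha t}/\rho(t)$, your treatment of $\rho\to\infty$ when $d(0)=0$, and above all the ``slow manifold'' relation $v(t)\sim d'(0)y(t)$ all route through this contraction, the convergence $V\to d(0)$, the concentration $xu\rightharpoonup (M-d(0))\delta_0$ and both rate statements are not established by your argument; you yourself flag the slow-manifold step as the main obstacle, and no substitute estimate is offered.

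Your skeleton of intermediate facts ($\rho'=V^{i_0}$, $d(0)<V\le M$, $\rho\to\infty$, $V\to d(0)$, the balance $\rho\,(V-d(0))\to d'(0)(M-d(0))$, then integration of $\rho'=V^{i_0}$) is the right one, but each step must be derived without characteristics, as the paper does. Concretely: $\rho\to\infty$ follows in both cases from the two-sided inequality $(d(0)-V)\rho+\alpha(M-V)\le \frac{dV}{dt}\le (d(0)-V)\rho+\beta(M-V)$ alone --- if $\rho$ were bounded then $V^{i_0}=\rho'\in L^1_t$ and $V\in{\cal C}^1_b$ force $V\to 0$, contradicting $\liminf \frac{dV}{dt}\ge d(0)\rho_\infty+\alpha M>0$; note the $\alpha(M-V)$ term makes the case $d(0)=0$ no harder, so the ``finer argument'' you call for is unnecessary. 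Then $V\to d(0)$ follows from the upper inequality and the Gronwall-type Lemma~\ref{lem:Gronwall} with $h=\rho\to\infty$; concentration at zero follows from the closed inequality $\frac{dM_2}{dt}\le (V-d(0))M-2\alpha M_2$ for the second moment (Lemma~\ref{lem:2moment_vanishes}), which requires no characteristic structure; and the balance relation (your $\rho v\to d'(0)(M-d(0))$) is proved rigorously in Lemma~\ref{lem:balance} by writing the ODE for $w=\rho(V-d(0))-d'(0)(M-d(0))$, namely $w'=B(t)-\rho w+\rho C(t)$ with $C(t)=\int_0^\infty\big(d(x)-d(0)-d'(0)x\big)u\,dx\to 0$ (a consequence of the weak concentration already established), and applying Lemma~\ref{lem:Gronwall} once more. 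With that balance in hand, your final integrations of $\rho'=V^{i_0}$ in the cases $d(0)>0$ and $d(0)=0$ coincide with the paper's computation and are correct.
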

This result is proved in Section~\ref{sec:LSN}, using strongly the assumption~\eqref{as:d:inc} in successive estimates. This shows a  destabilization effect of nucleation: rather than increasing the whole solution, as one could first guess, it  leads to a complete depolymerization and to a mass concentration around zero. The  explanation is that under these assumptions, since we always have $V(t)>d(0),$ the nucleation reaction permanently fuels the total number of polymers, which increases to infinity, and since the total mass remains finite, the only possibility is that the average size of the polymers vanishes.

\subsection{Complete model: a possible {steady state}}
\label{sec:frag}
The previous results take into account the three reactions of polymerization, depolymerization and nucleation, and show that these reactions alone cannot render out the spontaneous creation - by nucleation - of { a steady distribution of} fibrils: either they all collide in a unique infinitely large aggregate (Ostwald ripening) or they disintegrate into dusts (shattering). We thus consider now also fragmentation, sometimes denominated  in the biophysics literature a \emph{secondary pathway}~\cite{Bishop}, \emph{i.e.} a reaction { with a much} smaller reaction rate than the primary pathway (here, the polymerization). Nevertheless, it is able to play a prominent role to modify the primary reactions. 
To simplify our analysis, we  assume that the fragmentation rate $B$ and binary fragmentation kernel $\kappa$ satisfy 
\begin{equation}
\label{customary}
\int_0^y \kappa(y,dx)=1, \quad \int_0^y x\, \kappa(y,dx)=y/2,
\end{equation}
\begin{equation}
\label{Brates:1}
\exists\;B_m>0,\qquad B(x)\geq B_m>0\quad \forall x\ge 0,
\end{equation}
which provides us with a convenient control on the relative size of fragmentation rates. This  bound from below is somehow a strong assumption that we can relax.

Two very different behaviors occur. First, under the assumption of an increasing - or equal to $0$ - depolymerization rate, with or without nucleation, the fragmentation strongly amplifies the previous result of dust formation stated in Theorem~\ref{th:probB:asymp2}. This is stated in the following theorem.
\begin{Theorem}
With  the assumptions and notations of Theorem~\ref{th:probA:asymp1}, let $\ep\in\{0,1\}$, $i_0\in \N^*,$ let  $B(x)$ and $\kappa(y,x)$ satisfy~\eqref{customary}--\eqref{Brates:1}. Then the  solution $(V,u)\in  {\cal C}^1(\R_+)\times {\cal C} \big(\R_+,L^1((1+x^2)dx)\big)$ to the system~\eqref{eq:mass}\eqref{eq:frame} \eqref{bcB} satisfies~\eqref{asymp:LSN}, and more precisely, for $t>0$ we have 
$$
\int_0^\infty u(t,x) dx \ge e^{B_mt}\int_0^\infty u_0(x) dx \quad \mbox{{\text{and}}}\quad V(t)-d(0)=O(t e^{-B_mt})>0.
$$
\label{th:probB:asymp2:2}
\end{Theorem}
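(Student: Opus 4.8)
The plan is to exploit the lower bound~\eqref{Brates:1} on the fragmentation rate to get exponential growth of the total number of polymers $\rho(t)$, and then feed this back into the monomer equation~\eqref{VeqA} to conclude that $V(t)$ collapses to $d(0)$ at an exponential rate. First I would integrate~\eqref{eq:frame} in $x$ over $(0,\infty)$. Using the normalization $\int_0^y\kappa(y,dx)=1$ from~\eqref{customary}, the fragmentation terms combine to give $\frac{d\rho}{dt} = \int_0^\infty B(x)u(t,x)\,dx + (\text{boundary flux at }x=0)$; the transport term contributes $(V(t)-d(0))u(t,0)\mathbbm 1_{V(t)-d(0)>0}$, which equals $\varepsilon V(t)^{i_0}\mathbbm 1_{V(t)-d(0)>0}\ge 0$ by~\eqref{bcB}, and the term at $x=+\infty$ vanishes since $u(t,\cdot)\in L^1((1+x^2)dx)$. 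Dropping the nonnegative boundary term and using $B(x)\ge B_m$ yields $\frac{d\rho}{dt}\ge B_m\rho(t)$, hence $\rho(t)\ge e^{B_m t}\rho_0$ by Grönwall; this requires $\rho_0>0$, which holds by hypothesis. To establish $V(t)>d(0)$ for all $t$, I would argue as in Proposition~\ref{pr:marginal_cases} / the analysis preceding Theorem~\ref{th:probB:asymp2}: if ever $V(t)=d(0)$ then $\frac{dV}{dt}\ge \alpha(M-V(t)) + (d(0)-V(t))\rho(t) = \alpha(M-d(0))>0$ (using~\eqref{as:d:inc}, mass conservation and $M>V_0>d(0)$), so $V$ cannot cross below $d(0)$; combined with~\eqref{V_init} this gives $V(t)>d(0)$ for all $t\ge 0$, so the Heaviside indicators are always active.

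Next I would bound $V(t)-d(0)$ from above. From~\eqref{VeqA} and~\eqref{as:d:inc}, writing $d(x)=d(0)+\int_0^x d'$ and using $d'\le\beta$,
\begin{equation}
\frac{dV}{dt} = -(V(t)-d(0))\rho(t) + \int_0^\infty (d(x)-d(0))u(t,x)\,dx \le -(V(t)-d(0))\rho(t) + \beta\, M_1(t),
\end{equation}
where $M_1(t)=\int_0^\infty x\,u(t,x)\,dx\le M$ by mass conservation. Since $V(t)-d(0)>0$ we get $\frac{dV}{dt}\le -B_m e^{B_m t}\rho_0\,(V(t)-d(0)) + \beta M$ (after inserting $\rho(t)\ge e^{B_m t}\rho_0$ in the first, dissipative, term only). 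Integrating this linear ODE with integrating factor $\exp(\rho_0 e^{B_m t})$ — or, more simply, by a comparison argument — shows that $V(t)-d(0)$ is eventually driven below any positive level; a slightly careful estimate gives $V(t)-d(0)=O(t e^{-B_m t})$. The cleanest route is: for $t$ large the coefficient $\rho(t)\ge e^{B_m t}\rho_0$ dominates, so $V(t)-d(0)\le \frac{\beta M}{\rho(t)} + (\text{transient}) \le \frac{\beta M}{\rho_0} e^{-B_m t} + \dots$; tracking the transient term produced by the initial condition and by the slowly-varying source $\beta M$ against the growing sink yields the extra polynomial factor $t$. Finally, once $V(t)\to d(0)$, mass conservation gives $\int_0^\infty x\,u(t,x)\,dx\to M-d(0)$; combined with $\rho(t)\to\infty$ this forces the normalized profile to concentrate: for any $R>0$, $\int_R^\infty x\,u(t,x)\,dx$ must vanish in the limit (otherwise, since $\int_R^\infty u \le \frac1R\int_R^\infty x u \le \frac{M}{R}$ stays bounded while $\rho\to\infty$, the mass $\int_0^R x u \le R\int_0^R u$ would have to carry $M-d(0)$ with $\int_0^R u\to\infty$ — consistent only with concentration at $0$). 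Testing~\eqref{eq:frame} against a smooth compactly supported $\varphi$ and passing to the limit then yields $xu(t,x)\rightharpoonup (M-d(0))\delta_0$ and $\rho(t)\to\infty$, i.e.~\eqref{asymp:LSN}.

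The main obstacle is the sharp rate $V(t)-d(0)=O(t e^{-B_m t})$: the naive Grönwall bound on the ODE for $V-d(0)$ with the super-exponentially growing integrating factor $\exp(\rho_0(e^{B_m t}-1)/B_m)$ would a priori give only $V(t)-d(0)=O(e^{-c e^{B_m t}})+(\text{forcing contribution})$, and one must show that the persistent source term $\beta M$ (or more precisely $\beta M_1(t)$, which does not itself decay) balanced against the sink of size $\rho(t)(V-d(0))$ produces exactly an $O(e^{-B_m t})$ decay up to the polynomial correction, rather than faster or slower. A clean way to handle this is to work directly with the quantity $w(t):=(V(t)-d(0))\rho(t)\ge 0$ and show $\frac{dw}{dt}\le B_m w + \beta M \rho(t) - \rho(t) w$ hmm — or, better, bound $V(t)-d(0)\le \beta M_1(t)/\rho(t)$ up to a term that solves the homogeneous equation, which decays faster than any exponential; then $V(t)-d(0)\le \beta M/(\rho_0 e^{B_m t})$ plus faster-decaying corrections gives $O(e^{-B_m t})$, and a slightly sharper treatment of the transient recovers the stated $O(t e^{-B_m t})$. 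Verifying the interchange of $\frac{d}{dt}$ and $\int_0^\infty\,dx$, and the vanishing of boundary terms at $x=+\infty$, relies on the $L^1((1+x^2)dx)$ regularity granted in Section~\ref{sec:amr} and is routine but should be noted.
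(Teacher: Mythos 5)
Your setup coincides with the paper's: integrating \eqref{eq:frame} in $x$, using the boundary condition and $V(t)>d(0)$ (the proof of Lemma~\ref{lem:probA:exist} carries over), gives $\frac{d\rho}{dt}=\ep V^{i_0}+\int_0^\infty B u\,dx\ge B_m\rho$, hence $\rho(t)\ge \rho_0 e^{B_m t}$, and your differential inequality $\frac{d}{dt}\big(V-d(0)\big)\le \beta M-\rho(t)\big(V-d(0)\big)$ is exactly the one the paper uses. However, the decisive quantitative step is missing. From Duhamel, $V(t)-d(0)\le (V_0-d(0))e^{-\int_0^t\rho}+\beta M\int_0^t e^{-\int_s^t\rho(\sigma)d\sigma}ds$, and everything hinges on estimating the second integral; the paper inserts $\rho(s)\ge\rho_0e^{B_ms}$ and then uses the convexity inequality $\frac{e^{B_mt}-1}{t}\le\frac{e^{B_mt}-e^{B_ms}}{t-s}$ to get $\int_0^t e^{-\frac{\rho_0}{B_m}(e^{B_mt}-e^{B_ms})}ds\le \frac{tB_m}{\rho_0(e^{B_mt}-1)}$, which is precisely where the factor $t e^{-B_m t}$ comes from. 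Your proposed shortcut, namely that $V-d(0)$ equals $\beta M_1(t)/\rho(t)$ ``up to a term solving the homogeneous equation'', is not correct as stated: subtracting $\beta M/\rho(t)$ leaves a persistent forcing of size $\beta M\rho'/\rho^2$ (plus the time dependence of $M_1$), which does not solve the homogeneous equation, and under the hypotheses of this theorem $\rho'$ has no upper bound (only $B\ge B_m$ is assumed), so bookkeeping via $w=\rho\,(V-d(0))$ also lacks the needed control of $\rho'/\rho$. A crude splitting of the Duhamel integral at $t/2$ only yields $O(e^{-B_mt/2})$; some substitute for the convexity argument must be supplied to reach $O(te^{-B_mt})$.

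The second gap is the concentration statement in \eqref{asymp:LSN}. Knowing $\rho(t)\to\infty$ and $\int_0^\infty x\,u\,dx\to M-d(0)$ does \emph{not} force $\int_R^\infty x\,u\,dx\to 0$: a population made of a diverging number of vanishingly small polymers together with a fixed bounded-size component carrying the mass satisfies both constraints while $x u$ does not concentrate at $0$; your parenthetical argument is circular. The paper closes this by proving that the second moment vanishes, as in Theorem~\ref{th:probB:asymp2}: the transport/depolymerization part gives $\frac{dM_2}{dt}\le (V(t)-d(0))M-2\alpha M_2$ (this is where \eqref{as:d:inc}, i.e. $d'\ge\alpha>0$, enters), the fragmentation contribution does not increase the second moment under \eqref{customary} (since $x^2\le xy$ for $x\le y$), and Lemma~\ref{lem:Gronwall} then yields $M_2(t)\to 0$, whence $\int_\ep^\infty x\,u\,dx\le \frac{2}{\ep}M_2(t)\to 0$ and the weak convergence $x u\rightharpoonup (M-d(0))\delta_0$ follows. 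You need this (or an equivalent moment estimate) rather than the counting argument you sketch.
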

We prove this theorem in Section~\ref{sec:frag:instab},  following the same lines as for Theorem~\ref{th:probB:asymp2}.

\

To avoid dust formation, we may use our assumption~\eqref{as:d:dec} of a decreasing depolymerization rate, which we precise as   follows:

\begin{equation}\label{as:d}
\begin{array}{c}
d\in {\cal C}^1 (\R_+,\R_+^*)^+ \text{ is strictly decreasing},
\\ \\
{\exists}\, C_d>0,\; n\in \NN \backslash \{0\}, \quad  \forall\; x\geq 1,\qquad d(x) -  d(\infty) \geq  C_d x^{-n} .
\end{array}
\end{equation}

As already  mentioned, with this assumption solutions of  the original Lifshitz-Slyozov system undergo Ostwald ripening. When adding fragmentation to the system, the effect is to 
generate a non-trivial steady state to emerge, where small polymers feed large ones, and large ones in turn break down to feed smaller ones.  The precise result is stated in  Theorem~\ref{th:steady} under  additional assumptions. 

First, the fragmentation kernel should not charge $0$ or $1$ exclusively, a property which  is expressed by the following standard assumption 
\begin{equation}
\forall x\geq 0, \quad  a_k(x):= 1- 2\int_0^x \kappa(x,y)\f{y^2}{x^2} dy \geq c>0.
\label{as:frag}
\end{equation}
Second, we replace the former assumption~\eqref{Brates:1} on the fragmentation rate by\begin{equation}
\label{Brates:2}
\exists {\cal A}>0, \, B_m>0\quad \mbox{such that}\quad B(x\geq {\cal A}) \geq B_m >0,
\end{equation}
\begin{equation}
\label{Brates:3}
\sup_{x \geq 0}  B(x):= B_M < \infty.
\end{equation}
We also need that the fragmentation rate $\kappa$ vanishes for small sizes depending on $B$, namely
\begin{equation}\left\{\begin{array}{l}
\label{Bandk}
\exists \; \gamma >0,\;  \qquad  B(x)x^{-\gamma} \in L^\infty((0,\infty)),
\\[10pt]
{ \forall \; x_{\min}>0,\; \exists\; C>0\text{ such that}}\\[10pt]
\forall \,x_0<x_{\min},\, \big \vert \displaystyle\int_x^{x_0} \kappa(y,z) dz \big \vert \leq \min \big (1, C \f{\vert x-x_0\vert^\gamma}{y^\gamma} \big)\quad \forall \, x,\,y > 0 .
\end{array} \right. \end{equation}
\begin{Theorem} [Stable fibrils]
Let $d \in {\cal C}^1(\R_+)$ be a nonnegative decreasing function satisfying \eqref{as:d}.
Assume that \eqref{customary} and \eqref{as:frag}--\eqref{Bandk} hold true. Then, there exists a positive steady state  $(\bar V,U(x))\in \R_+^* \times L^1((1+x^2)dx)$  solution to \eqref{eq:mass}--\eqref{bcB}, with $\inf_x d(x) <\bar V < d(0)$. 
\label{th:steady}
\end{Theorem}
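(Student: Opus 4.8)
The plan is to construct the steady state $(\bar V, U)$ by a fixed-point argument on the scalar parameter $\bar V \in (\inf_x d, d(0))$, reducing the stationary system to a single transcendental equation expressing mass conservation. For a fixed candidate value $V \in (\inf_x d, d(0))$, the stationary transport-fragmentation equation reads
\begin{equation}
\label{plan:stat}
\frac{d}{dx}\bigl((V-d(x))U(x)\bigr) + B(x)U(x) = 2\int_x^\infty B(y)\kappa(y,x)U(y)\,dy,
\end{equation}
with the boundary condition $(V-d(0))U(0) = \varepsilon V^{i_0}$ (taking $\varepsilon=1$; the $\varepsilon=0$ case is the homogeneous one). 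Since $d$ is strictly decreasing with $d(0) > V > d(\infty)$ and $d(\infty) := \inf_x d \geq 0$ (by \eqref{as:d}, $d(\infty)$ may be $0$ but $d$ is valued in $\R_+^*$, so $d(\infty)\ge 0$), there is a unique $x_V > 0$ with $d(x_V) = V$, i.e. $V - d(x) > 0$ for $x > x_V$ and $< 0$ for $x < x_V$: the transport field points \emph{toward} $x_V$ from both sides. Thus $x_V$ plays the role of an attracting point, and one expects $U$ to be concentrated near $x_V$ but genuinely spread out by fragmentation — this is exactly the "anti-Ostwald" mechanism advertised in the introduction. First I would establish, for each such $V$, existence and uniqueness (up to normalization when $\varepsilon=0$) of a nonnegative solution $U_V \in L^1((1+x^2)dx)$ to \eqref{plan:stat}: this is a linear problem that should follow from Krein–Rutman / generalized-eigenvalue theory for growth-fragmentation operators as in \cite{BP,Doumic2009}, using \eqref{Brates:2}--\eqref{Brates:3} for the fragmentation rate bounds, \eqref{as:frag} to prevent trivial kernels, and \eqref{Bandk} to control behavior near $x=0$ where the transport coefficient $V - d(x)$ changes sign. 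The delicate point here is the singularity of \eqref{plan:stat} at $x = x_V$ where $V - d(x)$ vanishes: one must check the solution does not blow up there, which is where the sign structure ($d' < 0$, so the coefficient crosses transversally with the "good" sign) saves us, and integrability of $U_V$ near $x_V$ follows.

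Next I would derive the key scalar balance. Integrating \eqref{plan:stat} against $x\,dx$ and using the kernel normalization \eqref{customary} (so fragmentation conserves mass), the fragmentation terms cancel and one obtains, after integration by parts,
\begin{equation}
\label{plan:massbal}
\int_0^\infty (V - d(x))U_V(x)\,dx = -\varepsilon V^{i_0} \cdot i_0 \quad\text{(up to the $x=0$ boundary term)},
\end{equation}
which combined with the $V$-equation \eqref{VeqA} at steady state, $V\rho_V = \int_0^\infty d(x)U_V(x)\,dx$, and the definition $M = V + \int_0^\infty x U_V(x)\,dx$, yields a single equation $\Phi(V) = M$ where $\Phi(V) := V + \int_0^\infty x U_V(x)\,dx$ and $U_V$ is normalized consistently with \eqref{VeqA}. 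The strategy is then an intermediate value argument: I would show $\Phi$ is continuous on $(\inf_x d, d(0))$ and study its limits at the endpoints. As $V \uparrow d(0)$, the attracting point $x_V \downarrow 0$, and one expects the polymerized mass $\int x U_V\,dx$ to blow up (the nucleation influx at $x=0$ is no longer balanced because the transport barely pushes mass inward) — so $\Phi(V) \to +\infty$ or at least exceeds $M$. As $V \downarrow \inf_x d$, the point $x_V \to +\infty$ and the polymerized mass should vanish (depolymerization wins everywhere on bounded sets), so $\Phi(V) \to \inf_x d < d(0) < M$ — wait, we need $\Phi(V) < M$ at the lower end, which holds provided $M$ is in the right range; here the hypothesis that a solution exists with $\inf_x d < \bar V < d(0)$ presumably corresponds to the relevant range of $M$, and one must check consistency (likely $M$ large enough relative to $d(0)$, echoing \eqref{V_init}). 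By continuity there is $\bar V$ with $\Phi(\bar V) = M$, and $(\bar V, U_{\bar V})$ is the desired steady state.

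The main obstacle, I expect, will be the twofold analytic control of $U_V$ uniformly in $V$: (i) proving the a priori $L^1((1+x^2)dx)$ bound and, crucially, a \emph{lower} bound on the polymerized mass away from the endpoints — i.e. showing $U_V \not\equiv 0$ and that its second moment is finite and varies continuously with $V$ — which requires quantitative tail estimates on $U_V$ for large $x$ (using \eqref{as:d}'s polynomial lower bound $d(x) - d(\infty) \geq C_d x^{-n}$ to get, combined with the drift, enough confinement to kill high moments) and near-$x_V$ integrability; and (ii) the endpoint asymptotics of $\Phi$, where one must make rigorous the heuristic that mass escapes to $+\infty$ (lost) as $V \downarrow \inf d$ and accumulates without bound as $V \uparrow d(0)$. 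Assumption \eqref{Bandk} on the vanishing of $\kappa$ near small sizes relative to $B$ is what should make the operator in \eqref{plan:stat} well-behaved near the boundary and give the continuity of $V \mapsto U_V$; assumption \eqref{as:frag} guarantees the steady state is genuinely spread out rather than collapsing. Once these uniform estimates and the Krein–Rutman step are in hand, the fixed-point conclusion via the intermediate value theorem is routine.
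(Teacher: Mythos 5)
There is a genuine gap at the core of your plan, located in your very first step. For $V\in(\inf_x d,\,d(0))$ the boundary condition \eqref{bcB} is switched off by the Heaviside factor $\1_{V-d(0)>0}$ (the flux at $x=0$ is outgoing, so no boundary condition is imposed at all); hence the stationary transport--fragmentation equation you write is a \emph{homogeneous} linear problem in $U$, for $\ep=1$ just as for $\ep=0$. Your claim that it admits a nontrivial nonnegative solution $U_V$ for \emph{every} such $V$ is false: what Krein--Rutman provides for every $V$ is a principal eigenpair $(\lambda_V,U_V)$, i.e.\ a positive solution of the equation with an extra term $\lambda_V U_V$, and a positive solution of the genuine stationary equation exists only at those special $V$ where $\lambda_V=0$. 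Moreover, multiplying the eigenvalue equation by $x$ and integrating (using \eqref{customary}, so the fragmentation terms cancel) gives formally $\lambda_V\int xU_V\,dx=V\rho_V-\int d\,U_V\,dx$: the steady monomer balance you invoke is exactly the condition $\lambda_V=0$, and it is $1$-homogeneous in $U$, so it cannot be arranged by ``normalizing $U_V$ consistently with \eqref{VeqA}'' as you propose. Consequently the free scalar $V$ must be spent on killing the eigenvalue, not on matching a prescribed $M$ through $\Phi(V)=M$; the total mass is then recovered afterwards from the scaling freedom of the homogeneous equation (note the theorem does not prescribe $M$). Your endpoint heuristics also lean on a nucleation influx at $x=0$ as $V\uparrow d(0)$, which has no basis in the regime $\bar V<d(0)$ asserted by the theorem, again because of the Heaviside cut-off that your version of the boundary condition drops.

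This is precisely how the paper proceeds: it constructs the principal eigenvalue $\lambda_V^{\ep,R}$ of a regularized problem truncated to $[x_\ep,R]$ (Theorem~\ref{th:KR}), shows that $V\mapsto\lambda_V^R$ is continuous and positive for $V$ near $d(0)$, proves via moment estimates that $\lambda_V^R\geq 0$ forces $V\geq d(\infty)+\eta$, so by an intermediate value argument $\lambda_{\bar V}^R=0$ for some $\bar V\in(\inf_x d,\,d(0))$ (Theorem~\ref{th:estim}), and finally lets $R\to\infty$ with estimates uniform in $R$. The second point your plan underestimates is the singular point $x_0=d^{-1}(V)$ where the transport coefficient vanishes: the paper devotes a full Banach--Picard argument (Theorem~\ref{th:truncated}) to extending the eigenfunction across $x_0$, which works only under the absorption condition $\lambda_V^R>-B(x_0)+d'(x_0)$ and uses \eqref{Bandk} quantitatively to obtain a contraction and the bound $\vert \bar V-d(x)\vert\,U(x)\leq C\vert x-x_0\vert^{\gamma}$; ``the sign structure saves us'' is not enough, since a vanishing transport speed can generate a Dirac mass, and this H\"older-type estimate is also what furnishes the local integrability near $x_0$ and the compactness needed for the limit $R\to\infty$. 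Your intermediate-value skeleton is sound, but the correct scalar condition to tune (vanishing of the principal eigenvalue) and the analysis at the turning point $x_0$ are the two essential ingredients missing from the proposal.
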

This theorem is proved in Section~\ref{sec:steady}. { It states the existence of a positive steady state solution of the polymerization-depolymerization-fragmentation problem; however, the question of whether the solution converges towards this steady state remains open. The same question remains open for a large variety of nonlinear versions of the growth-fragmentation equation, such as the prion model, for which asymptotic behaviour has been solved only for some specific cases, see~\cite{Gabriel_2015}.}

\section{Lifshitz-Slyozov system revisited}
\label{sec:LS}

This section is devoted to  the proof of Theorems \ref{th:probA:asymp1} and~\ref{th:probB:asymp2}.
 We first  state some additional properties of the solutions.
\begin{Lemma}
Let $d$ satisfy~\eqref{as:d:inc}, $V_0$ satisfy~\eqref{V_init} and  $u_0\in L^1\left((1+x^2)dx\right)^+$. The solution $(V,u)\in {\cal C}^1(\R_+)\times {\cal C}\left(\R_+,L^1 \left((1+x^2)dx\right)^+\right)$  either of the system \eqref{LS} or of~\eqref{LSN} satisfies
\begin{enumerate}
\item For all times $t\geq 0$, we have $d(0)< V(t) \leq M$.\label{lem:probA:exist:1}
\item If $M_\theta(0)<\infty$ for some $\theta >0$, then $M_\theta(t)$ is bounded uniformly in time.
\item For system~\eqref{LS}, the total number of polymers $\rho(t)$ is constant.
\end{enumerate}
\label{lem:probA:exist}
\end{Lemma}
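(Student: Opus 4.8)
First, I would treat the two systems \eqref{LS} and \eqref{LSN} together, the value $\varepsilon\in\{0,1\}$ entering only in the third assertion. The bound $V(t)\le M$ is read off from mass conservation \eqref{eq:mass}, since $\int_0^\infty x\,u(t,x)\,dx\ge 0$. For the strict lower bound I would plug $d(x)\ge d(0)$ (true a fortiori under \eqref{as:d:inc}) and $u\ge 0$ into the $V$-equation \eqref{VeqA} to get $\frac{d}{dt}\bigl(V(t)-d(0)\bigr)\ge \rho(t)\bigl(d(0)-V(t)\bigr)$; as $\rho\in{\cal C}(\R_+)$ is locally bounded, Grönwall's inequality gives $V(t)-d(0)\ge \bigl(V_0-d(0)\bigr)\exp\bigl(-\int_0^t\rho\bigr)$, which is strictly positive by \eqref{V_init}. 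In particular the Heaviside indicator in \eqref{bcB} is always on, a fact used repeatedly.

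Next, for the moment bound I would test the $u$-equation against $x^\theta/\theta$ and integrate by parts, the boundary terms vanishing (at $x=0$ because $x^\theta\to0$ while the flux $(V-d(x))u$ is bounded there by \eqref{bcB}; at $x=+\infty$ by a standard truncation), which yields
$$
\frac{dM_\theta}{dt}=\int_0^\infty x^{\theta-1}\bigl(V(t)-d(x)\bigr)u(t,x)\,dx\ \le\ \int_0^\infty x^{\theta-1}\bigl(M-d(0)-\alpha x\bigr)u(t,x)\,dx ,
$$
using $V\le M$ and $d(x)\ge d(0)+\alpha x$ from \eqref{as:d:inc}. The term $-\alpha\int_0^\infty x^\theta u=-\alpha\theta M_\theta$ is dissipative. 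For $\theta=2$ what remains is $(M-d(0))\int_0^\infty x u=(M-d(0))(M-V)\le (M-d(0))M$, so $\frac{d}{dt}M_2\le (M-d(0))M-2\alpha M_2$ and $M_2$ is uniformly bounded; for $\theta>2$ I would split the integral at a radius $R$ chosen so that $M-d(0)-\alpha x\le-\frac{\alpha}{2}x$ on $[R,\infty)$, estimate $x^{\theta-1}\le R^{\theta-2}x$ on $[0,R]$ against $\int_0^\infty xu\le M$, and recover $\frac{d}{dt}M_\theta\le C-\alpha M_\theta$ with $C$ depending only on the data. The range $1\le\theta\le 2$ then follows from log-convexity of moments, $\theta M_\theta\le (M-V)^{2-\theta}(2M_2)^{\theta-1}\le M^{2-\theta}(2M_2)^{\theta-1}$, and the range $0<\theta<1$ from $\theta M_\theta\le (M-V)^{\theta}\rho^{1-\theta}\le M^{\theta}\rho^{1-\theta}$, which for \eqref{LS} is bounded thanks to the conservation of $\rho$ established below.

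Finally, for the third assertion I would integrate the $u$-equation of \eqref{LS} over $x\in(0,\infty)$: the only surviving flux is $(V(t)-d(0))u(t,0)$, which vanishes because $\varepsilon=0$ there (and anyway $V(t)>d(0)$), so $\frac{d}{dt}\rho=0$; the same computation for \eqref{LSN} gives instead $\frac{d}{dt}\rho=V(t)^{i_0}\le M^{i_0}$, used only as the a priori bound $\rho(t)\le\rho_0+M^{i_0}t$. The main obstacle is the second assertion: one must control the production integral $\int_0^\infty x^{\theta-1}(V-d(x))u$ near $x=0$, where the weight $x^{\theta-1}$ is singular for $\theta<1$; what makes it work is the interplay between the linear growth of depolymerization — which both confines the characteristics (Remark~\ref{rm:char_bound}) and supplies the dissipation $-\alpha\theta M_\theta$ — and the boundedness of the first moment $\int_0^\infty xu=M-V$ forced by \eqref{eq:mass}. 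A routine but necessary point throughout is to justify the integrations by parts and the vanishing of the boundary terms at $0$ and $+\infty$ within the class ${\cal C}(\R_+,L^1((1+x^2)dx))$, which is carried out at the level of the weak formulation.
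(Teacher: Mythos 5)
Your items 1 and 3 follow the paper's own arguments: the same differential inequality $\frac{d}{dt}\big(V-d(0)\big)\ge -\rho(t)\big(V(t)-d(0)\big)$ plus Gr\"onwall for the strict lower bound, and the same integration in $x$ (using $V(t)>d(0)$ to activate the boundary condition) for the conservation of $\rho$ under \eqref{LS}. For item 2 you take a genuinely different route. The paper is Lagrangian: it represents $u$ along characteristics via \eqref{repr_formula}--\eqref{Jac}, changes variables, and invokes the bound $X(t,z)\le z+M/\alpha$ of Remark~\ref{rm:char_bound} to get $\int_0^\infty x^\theta u(t,x)\,dx\le \int_0^\infty (z+M/\alpha)^\theta u_0(z)\,dz$ in one stroke, for every $\theta>0$ at once. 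You instead close Eulerian differential inequalities $\frac{d}{dt}M_\theta\le C-cM_\theta$, exploiting the dissipation coming from $d(x)\ge d(0)+\alpha x$, and interpolate for intermediate exponents; your computations are correct. The trade-off: the characteristics computation is shorter and exponent-uniform, but the representation \eqref{repr_formula} accounts only for characteristics issued from the initial datum, so as written it is really a proof for \eqref{LS}; for \eqref{LSN} the region swept by characteristics entering at $x=0$ carries the nucleation influx, which must be handled separately. Your moment inequalities see the boundary only through the vanishing of $x^\theta(V-d)u$ at $x=0$, hence apply verbatim to \eqref{LSN} for every $\theta\ge 1$ --- in particular for $\theta=2$, which is essentially the only case used in the sequel.

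The one case you leave out, \eqref{LSN} with $0<\theta<1$, is not a defect of your argument: it cannot be recovered by any method. Indeed, by H\"older, $\int xu\,dx\le\big(\int x^\theta u\,dx\big)^{1/(2-\theta)}\big(\int x^2 u\,dx\big)^{(1-\theta)/(2-\theta)}$, i.e. $\int x^\theta u\,dx\ge\big(\int xu\,dx\big)^{2-\theta}\big(\int x^2u\,dx\big)^{\theta-1}$; since for \eqref{LSN} one has $\int xu\,dx\to M-d(0)>0$ by \eqref{nucleation:direc} and $\int x^2u\,dx\to 0$ by Lemma~\ref{lem:2moment_vanishes}, the moments $M_\theta$ with $0<\theta<1$ actually diverge as $t\to\infty$ (consistently with a number of polymers growing in time while their sizes shrink). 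So the statement should be read as covering all $\theta>0$ for \eqref{LS} and $\theta\ge1$ for \eqref{LSN}, and your proof delivers exactly that; in this respect your Eulerian route is the more transparent one.
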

\begin{proof}
The bound $V(t)\leq M$ is immediate for any nonnegative solution and follows from the mass conservation~\eqref{eq:mass}.
For the lower bound on $V(t),$ we notice that, because $d$ is increasing, 
$$\begin{array}{ll}
\f{d \big(V(t)-d(0) \big) }{dt}  &=  -\rho(t) \left(V(t)-d(0)\right) + \displaystyle \int_0^\infty \big (d(x) -d(0)\big) u(t,x) dx 
\\ \\
&\geq - \rho(t) \left(V(t)-d(0)\right).
\end{array}
$$
Therefore, we conclude that 
$$
V(t)\ge d(0)+(V_0-d(0))e^{-\int_0^t \rho(s) ds } > d(0).
$$
For the second item, we may use the characteristics to represent solutions as
\begin{equation}
\label{repr_formula}
u\left(t,X(t,x)\right)= u_0(x) \exp \left\{ \int_0^t d'(X(\tau,x))\, d\tau\right\}.
\end{equation}
We also note that
\begin{equation}
\label{Jac}
\frac{\partial X(t,z)}{\partial z} = \exp \left\{-\int_0^t d'(X(\tau,z))\, d\tau�\right\}.
\end{equation}

We can extend $d(x)$ by the constant $d(0)$ for $x<0$, and thus define the characteristic in $\R_-$, then we define  $\bar z (t)$, the value such that $X(t,\bar z)=0$. Changing variables, using \eqref{Jac} and then in \eqref{repr_formula}, we find 
$$\begin{array}{rl}
\displaystyle \int_0^\infty x^\theta u(t,x)\, dx &= \displaystyle \int_{\bar z(t)}^\infty X(t,z)^\theta u(t,X(t,z)) \frac{\partial X(t,z)}{\partial z} \, dz
\\[10pt]
&= \displaystyle \int_{\bar z(t)}^\infty X(t,z)^\theta u(t,X(t,z)) \exp \left\{-\int_0^t d'(X(\tau,z))\, d\tau\right\} \, dz
\\[10pt]
&= \displaystyle  \int_{\bar z(t)}^\infty X(t,z)^\theta u_0(z) \, dz \le \int_0^\infty \left(z+ M/\alpha \right)^\theta u_0(z)\, dz,
\end{array}$$
where we used Remark \ref{rm:char_bound} for the last step. 

For the third item, the conservation of the total number of polymers $\rho(t)$ is obtained by integrating the equation for $u$ and using  the boundary condition at $x=0$, thanks to the fact that $V(t)>d(0).$
\end{proof}

\subsection{Lifshitz-Slyozov without nucleation}
\label{subsec:LS}

Theorem \ref{th:probA:asymp1} follows essentially from two different entropy inequalities. The first one, inspired from~\cite{Michel2008},  shows the exponential convergence of the mass along any characteristic curve, which in turns implies that all characteristic curves converge to each other. The second entropy inequality  is an adaptation of the entropy functional introduced by \cite{Collet2002} in the context of the Lifshitz--Slyozov model, and shows the convergence of $V(t)$.
\begin{Lemma}[Entropy inequality - convergence of the characteristic curves~\cite{Michel2008}] \label{lem:entropy1}
Let $d$ satisfy~\eqref{as:d:inc}, $u_0\in L^1\left((1+x^2)dx\right)^+$ and $V_0>d(0)\geq 0.$ Let $(V,u)\in {\cal C}^1(\R_+)\times {\cal C}\left(\R_+,L^1 \left((1+x^2)dx\right)^+\right)$ be a solution of \eqref{LS}. 
Let us define, for any  $z\geq 0,$
$$
g(t,z):=\int_0^\infty u(t,x)\vert X(t,z)-x\vert ^2\, dx.
$$
We have
$$
g(t,z)\leq g(0,z) e^{-2\alpha t}.
$$
\end{Lemma}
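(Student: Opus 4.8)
The plan is to differentiate $g(t,z)$ in time and show the resulting expression is bounded above by $-2\alpha\, g(t,z)$, after which Gronwall's inequality gives the claim. Write $g(t,z)=\int_0^\infty u(t,x)\,|X(t,z)-x|^2\,dx$ and expand the square derivative as two contributions: one from the time-dependence of the transported point $X(t,z)$, and one from the time-dependence of the density $u(t,x)$ solving the transport equation in \eqref{LS}.

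For the first contribution, $\partial_t |X(t,z)-x|^2 = 2(X(t,z)-x)(V(t)-d(X(t,z)))$, using Definition~\ref{def:Characteristics}. For the second contribution, I would use the weak form of the equation $\partial_t u + \partial_x((V(t)-d(x))u)=0$ against the test function $x\mapsto |X(t,z)-x|^2$: this produces $\int_0^\infty (V(t)-d(x))\,\partial_x\big(|X(t,z)-x|^2\big)\,u(t,x)\,dx = -2\int_0^\infty (V(t)-d(x))(X(t,z)-x)\,u(t,x)\,dx$, with no boundary term since $(V(t)-d(0))u(t,0)=0$ by the boundary condition in \eqref{LS} and the weight vanishes to second order is not needed — the test function itself need not vanish, but the flux $(V(t)-d(0))u(t,0)=0$ kills the boundary contribution at $x=0$ and there is nothing at $x=+\infty$ because of the moment bounds from Lemma~\ref{lem:probA:exist}. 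Adding the two contributions, the terms involving $V(t)$ cancel pairwise and one is left with
$$
\frac{d}{dt} g(t,z) = -2\int_0^\infty \big(d(X(t,z))-d(x)\big)(X(t,z)-x)\,u(t,x)\,dx.
$$
Now the monotonicity assumption \eqref{as:d:inc} enters: since $\alpha \le d'(\cdot)$, the mean value theorem gives $(d(X(t,z))-d(x))(X(t,z)-x)\ge \alpha\,|X(t,z)-x|^2$ for every $x$, hence $\frac{d}{dt}g(t,z)\le -2\alpha\int_0^\infty u(t,x)|X(t,z)-x|^2\,dx = -2\alpha\, g(t,z)$. Integrating this differential inequality yields $g(t,z)\le g(0,z)e^{-2\alpha t}$.

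The main obstacle is justifying the differentiation under the integral sign and the use of the weak formulation with a test function that grows quadratically in $x$: one must know $u(t,\cdot)\in L^1((1+x^2)\,dx)$ uniformly in $t$ and that $X(t,z)$ stays bounded, both of which are provided by Lemma~\ref{lem:probA:exist} (item 2) and Remark~\ref{rm:char_bound}. A minor technical point is handling the boundary term at $x=0$ correctly; since the test function $|X(t,z)-x|^2$ does not vanish at $x=0$, one relies on the flux condition $(V(t)-d(0))u(t,0)\mathbbm{1}_{V(t)-d(0)>0}=0$ together with $V(t)>d(0)$ from Lemma~\ref{lem:probA:exist}, which forces $u(t,0)=0$, so the boundary contribution indeed drops. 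Everything else is routine algebra and Gronwall.
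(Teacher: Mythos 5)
Your proposal is correct and follows essentially the same route as the paper: differentiate $g$ in time, combine the contribution from $\frac{dX}{dt}$ with the one from the transport equation (integration by parts against the quadratic test function), watch the $V(t)$ terms cancel, and use $d'\geq\alpha$ plus Gronwall to get $g(t,z)\leq g(0,z)e^{-2\alpha t}$. The only difference is cosmetic: you spell out the boundary-flux and moment-bound justifications that the paper leaves implicit.
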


\begin{proof}
Because $u$ has a finite second moment, $g$ is well defined and we may also define
$$
\tilde g(t,z):=\int_0^\infty u(t,x)(X(t,z)-x) \big(d(X(t,z))-d(x)\big)\, dx \geq \alpha g(t,z).
$$
An immediate calculation gives 
%
$$\begin{array}{rl}   \displaystyle \frac{d g}{dt}  (t,z) \! \! \! \!
&= \displaystyle\int_0^\infty \biggl(\f{\p}{\p t} u(t,x) \vert X(t,z)-x\vert ^2\  + 2 u(t,x)  \f{dX}{dt} (t,z)\big(X(t,z) -x \big) \biggr) dx
\\[15pt]
&= \displaystyle\int_0^\infty  \biggl( \f{\p }{\p x} \big(u\big(d(x) - V(t) \big)\big) \vert X-x\vert ^2\   + 2 u  (V(t) - d(X)) \big(X -x \big) \biggr) dx
\\[15pt]
&=  \displaystyle\int_0^\infty \biggl(2u(d(x) - V(t) )  ( X-x)\  + 2 u  (V(t) - d(X)) \big(X -x \big) \biggr) dx
\\[15pt]
&= - 2 \tilde g(t,z) \leq -2 \alpha g (t,z),
\end{array}
$$
and hence $g (\cdot, z)  \in L^1_t (0,\infty)$ with the announced decay.
\end{proof}
This proves the point~\ref{th:probA:asymp1:1} in Theorem~\ref{th:probA:asymp1}, and shows that the mass concentrates along any characteristic curve. To obtain the convergence of the characteristics towards a fixed point, we use a second entropy inequality, directly adapted from Collet {\em et al}~\cite{Collet2002},  
\begin{Definition}[\cite{Collet2002}]
\label{def:entropy2}
Let $k:[0,\infty)\rightarrow [0,\infty)$ be a ${\cal C}^1$ function. 
We introduce
$$
H_k(t):= \int_0^\infty k(x) u(t,x)\, dx + K(V(t)),\quad K(v)=\int_{d(0)}^v k'(d^{-1}(s))\, ds.
$$
\end{Definition}
Note that this definition makes sense only if $V(t)$ lies in  the range of $d$, \emph{i.e.} under assumption~\eqref{as:d:inc}, the set $[d(0),\infty)$, which is the case thanks to Lemma~\ref{lem:probA:exist}.
\begin{Lemma} [Entropy inequality -  adapted from~\cite{Collet2002}]
\label{lem:entropy2}
Let $(V, u)$ be a solution for either System~\eqref{LS} or~\eqref{LSN}. For $k$ a ${\cal C}^1$ convex positive function such that $\int_0^\infty k(x) u_0(x) dx <+\infty$, with furthermore $k(0)=0$ when $(V,u)$ is solution to~\eqref{LSN}. Then  $H_k(t)$ is well-defined at any time  and we have
$$
\frac{d}{dt}H_k(t)=\int_0^\infty u(t,x) (V(t)-d(x)) (k'(x)-k'(d^{-1}(V(t)))\, dx \leq 0.
$$
\end{Lemma}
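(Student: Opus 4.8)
The plan is to differentiate $H_k(t)$ directly, using the transport equation for $u$ and the ODE for $V$, and then to exploit the convexity of $k$ together with the monotonicity of $d$ to see that the resulting expression has a sign. First I would compute
\[
\frac{d}{dt}\int_0^\infty k(x)u(t,x)\,dx
= -\int_0^\infty k(x)\,\frac{\partial}{\partial x}\bigl((V(t)-d(x))u\bigr)\,dx
= \int_0^\infty k'(x)(V(t)-d(x))u(t,x)\,dx + \bigl[\text{boundary term at }x=0\bigr],
\]
where the boundary term is $k(0)(V(t)-d(0))u(t,0)$. For system~\eqref{LS} this vanishes because the boundary condition forces $(V(t)-d(0))u(t,0)=0$; for system~\eqref{LSN} it vanishes because we assumed $k(0)=0$. (Here one uses that $k(x)u(t,x)$ has enough decay at $x=\infty$ for the integration by parts, which follows from $\int k(x)u_0\,dx<\infty$ — one should first justify, as in Lemma~\ref{lem:probA:exist}, that $\int k(x)u(t,x)\,dx$ stays finite, e.g.\ via the representation formula~\eqref{repr_formula}; for $k$ with at most polynomial growth this is immediate, and the general convex case can be treated by truncation.)

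Next I would differentiate the second piece: since $\frac{dV}{dt}= -\int_0^\infty (V(t)-d(x))u(t,x)\,dx$ by~\eqref{VeqA}, and $K'(v)=k'(d^{-1}(v))$ by the fundamental theorem of calculus applied to the definition of $K$ (valid because $V(t)\in[d(0),\infty)$, the range of $d$, by Lemma~\ref{lem:probA:exist}, and $d$ is a $\mathcal C^1$ increasing bijection onto its range under~\eqref{as:d:inc}), we get
\[
\frac{d}{dt}K(V(t)) = k'(d^{-1}(V(t)))\,\frac{dV}{dt} = -k'(d^{-1}(V(t)))\int_0^\infty (V(t)-d(x))u(t,x)\,dx .
\]
Adding the two contributions yields exactly the claimed identity
\[
\frac{d}{dt}H_k(t)=\int_0^\infty u(t,x)\,(V(t)-d(x))\,\bigl(k'(x)-k'(d^{-1}(V(t)))\bigr)\,dx .
\]

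Finally I would check the sign. Write $\bar x(t):=d^{-1}(V(t))$, so that $V(t)-d(x)$ has the same sign as $d(\bar x(t))-d(x)$, i.e.\ the same sign as $\bar x(t)-x$ since $d$ is strictly increasing. On the other hand $k$ convex means $k'$ is nondecreasing, so $k'(x)-k'(\bar x(t))$ has the same sign as $x-\bar x(t)$, the opposite sign. Hence the integrand $(V(t)-d(x))\bigl(k'(x)-k'(\bar x(t))\bigr)$ is $\le 0$ pointwise for every $x$, and the integral is $\le 0$. I expect the only genuinely delicate point to be the justification of the integration by parts and of the finiteness/well-posedness of $H_k(t)$ for a general convex $k$ (as opposed to a polynomially bounded one); everything else is a short and forced computation. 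This can be handled by the characteristics-based change of variables already used in Lemma~\ref{lem:probA:exist}: on the characteristic starting from $z$, $u$ is multiplied by $\exp\{\int_0^t d'(X(\tau,z))\,d\tau\}$ while the Jacobian contributes the reciprocal factor, so $\int_0^\infty k(x)u(t,x)\,dx=\int_{\bar z(t)}^\infty k(X(t,z))u_0(z)\,dz$, and since $X(t,z)\le z+M/\alpha$ this is bounded by $\int_0^\infty k(z+M/\alpha)u_0(z)\,dz$, which is finite whenever $\int_0^\infty k(x)u_0(x)\,dx<\infty$ and $k$ is, say, convex with at most the growth controlled in that hypothesis; the boundary term at infinity in the integration by parts vanishes by the same bound.
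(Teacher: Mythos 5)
Your proof is correct and follows essentially the same route as the paper's: differentiate $H_k$ using the transport equation for $u$ and the ODE for $V$, with $K'(V)=k'(d^{-1}(V))$, and obtain the sign from the convexity of $k$ combined with the monotonicity of $d$ (i.e.\ the monotonicity of $k'\circ d^{-1}$). Your explicit handling of the boundary term at $x=0$ (where $k(0)=0$ is exactly what is needed for system~\eqref{LSN}) and of the finiteness of $\int_0^\infty k(x)u(t,x)\,dx$ via the characteristics bound $X(t,z)\le z+M/\alpha$ only makes explicit what the paper leaves implicit.
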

\begin{proof}
For the sake of completeness, we recall the proof from~\cite{Collet2002}. We write 
$$\begin{array}{ll}
\frac{d}{dt}H_k(t)&=  \displaystyle \int_0^\infty k(x) \f{\p}{\p t} u(t,x)\, dx + \f{dV}{dt} k'(d^{-1}(V(t)))
\\ \\ 
&=  \displaystyle \int_0^\infty k(x) \f{\p}{\p x}\big( (d(x)-V(t) )u \big)\, dx + k'(d^{-1}(V(t)))  \int_0^\infty (d(x) - V(t))u dx
\\ \\
&=  \displaystyle \int_0^\infty  \big(d(x)-V(t)\big) u(t,x)\big( k'(d^{-1}(V(t)))-k'(x)\big) dx. 
\end{array}
$$
The negativity follows because the mapping $x \mapsto k' \circ d^{-1}$ is increasing. 
\end{proof}
\begin{Remark}
Due to the boundary condition, the  entropy inequality of Lemma~\ref{lem:entropy1}, which shows the concentration of mass along any characteristic curve, fails for the nucleation boundary condition (\eqref{bcB} with $\ep=1$) of System~\eqref{LSN}, while the entropy  inequality of Lemma~\ref{lem:entropy2},  remains true for both models.
\end{Remark}

\begin{Remark}
Our situation  is in some sense the opposite of the one analyzed in \cite{Collet2002} for the classical setting of the Lifshitz--Slyozov model: large clusters grow larger as time advances whereas small clusters tend to become even smaller. This explains why we have an entropy inequality for convex functions $k,$ whereas it is obtained for concave functions $k$ in \cite{Collet2002}.
\end{Remark}

\begin{proof}[Proof of Theorem~\ref{th:probA:asymp1}]

The proof of convergence, both for $V$ and for the characteristic curves, combines both entropy inequalities. We use $k(x)=\int_0^x d(s)\, ds$ in  Lemma~\ref{lem:entropy2} and follow the following steps.
\\[5pt]
{\em Step 1.} For the entropy built on $K(\cdot)$ as mentioned above, using that $\frac{dH_k}{dt} \in L_t^1$, we obtain
$$
\int_0^\infty u(t,x) (V(t)-d(x))^2\, dx \in L_t^1(0,\infty).
$$
For $k(x)=\int_0^x d(s)\, ds$, the result follows from the fact that $\frac{dH_k}{dt} \in L_t^1$.
\\[5pt]
{\em Step 2.} 
We claim that for  $z\in [0,\infty)$, we have $|V-d(X(\cdot,z))|^2 \in W_t^{1,1}(0,\infty)$. Consequently, $V(t)-d(X(t,z))$ tends to zero as $t\to \infty$ irrespective of $z\in [0,\infty)$. 

We first prove that $|V-d(X(\cdot,z))|^2 $ is integrable. We compute
$$\begin{array}{rl}
&\rho_0 |V(t)-d(X(t,z))|^2 = \displaystyle \int_0^\infty |V(t)-d(X(t,z))|^2 u(t,x)\, dx
\\[15pt]
&\qquad  \le 2 \displaystyle \int_0^\infty |V(t)-d(x))|^2u(t,x)\, dx + 2 \int_0^\infty |d(x)-d(X(t,z))|^2u(t,x)\, dx.
\end{array}$$
The first term is time integrable thanks to the Step~1. For the second term, we use the entropy provided by Lemma~ \ref{lem:entropy1}
$$
\int_0^\infty |d(x)-d(X(t,z))|^2u(t,x)\, dx \le \beta^2 g(t,z)\leq \beta^2 g(0,z) e^{-2\alpha t}.
$$
Then $|V-d(X(\cdot,z))|^2 \in L_t^1(0,\infty)$. 

Next, we consider the derivative of this function. It reads
$$
\frac{d}{dt}(V(t)-d(X(t,z)))^2= 2 \frac{d V}{dt} (V(t)-d(X(t,z)) - 2 d'(X(t;z)) (V(t)-d(X(t,z)))^2.
$$
We notice that the second term above is integrable thanks to the previous part of the proof, while the first term is also integrable as the product of two $L^2_t$ functions, since $\frac{dV}{dt} \in L^2_t$: indeed,
using the Cauchy-Schwarz inequality in~\eqref{VeqA} we get
$$
\left\vert \frac{d V}{dt} \right\vert^2  \le \left( \int_0^\infty u(t,x) |d(x)-V(t)|\, dx \right)^{2}\le \rho_0\;  \int_0^\infty(d(x)-V(t))^2 u(t,x),
$$
and we use here again the Step~1 to conclude this point.
\\[5pt]
{\em Step 3.} 
For any  $z\in [0,\infty)$, we  decompose the mass conservation relation as
\begin{equation}
\label{mass_defect}
M= V(t) +\rho_0 X(t,z) + B(t,z),  \qquad B(t,z):= \int_0^\infty (x-X(t,z)) u(t,x)\, dx.
\end{equation}
Combining the entropy inequality of Lemma~\ref{lem:entropy1} with the Cauchy-Schwarz inequality, we have
$$
\vert B(t,z)\vert \le \sqrt{\rho_0 g(0,z)} e^{-\alpha t}\qquad \forall t\ge 0.
$$
Hence for any $z\geq 0,$ 
$$\lim_{t\to \infty} V(t) +\rho_0 X(t,z) =\lim_{t\to\infty} d(X(t,z)) + \rho_0 X(t,z) =M.$$ 
By \comJuan{the strict} {monotonicity of the function $x\to d(x)+x$, and since $d(0)<V_0\leq M,$ there exists a unique}  solution $\bar x$ to
$$
d(\bar x) +\rho_0 \bar x=M.
$$
This implies that $\lim_{t\to \infty} X(t,z)=\bar x$, which in turns implies $\lim_{t\to \infty} V(t)=d(\bar x),$ and proves the point~\ref{xbar:1} of the theorem.
\\[5pt]
{\em Step 4.} 
To prove the point~\ref{xbar:3},  we write 
$$\begin{array}{ll}
W_2 (u,\rho_0\delta_{\bar x}) &\leq \sqrt{\iint \vert x-y\vert^2 \rho_0 \delta_{\bar x} (y) u(t,x) dx} \leq \sqrt{\int \vert x-\bar x\vert^2 \rho_0  u(t,x) dx}
\\ \\
& \leq \sqrt{2 \int \vert X(t,z)-\bar  x\vert^2 \rho_0  u(t,x) dx
+ 2 \int \vert X(t,z)- x\vert^2 \rho_0  u(t,x) dx}
\\ \\
& \leq C\left(e^{-\alpha t} + \vert X(t,z)-\bar x\vert \right) \to_{t\to\infty} 0.
\end{array}$$
 In order to obtain an exponential  rate of convergence for the second term $X(t,z)$, we use (\ref{mass_defect}), and writing $M=d(\bar x) +\rho_0 \bar x$ we obtain
\begin{equation}
\label{convenient1}
\vert (V(t)-d(\bar x)) +(\rho_0 X(t,z) - \rho_0 \bar x)\vert=\vert B(t,z)\vert \le \sqrt{\rho_0 g(0,z)} e^{-\alpha t} \quad \forall t\ge 0.
\end{equation}
Next we compute
$$
\frac{d}{dt} |X(t,z)-\bar x| = \mbox{sign}\ (X(t,z)-\bar x) \frac{d X(t,z)}{dt}  = \mbox{sign}\ (X(t,z)-\bar x) \left(V(t) - d(X(t,z))\right)
$$
$$
= \mbox{sign}\ (X(t,z)-\bar x) \left(V(t) - d(\bar x)\right)+  \mbox{sign}\ (X(t,z)-\bar x) \left(d(\bar x) - d(X(t,z))\right)
$$
$$
= \mbox{sign}\ (X(t,z)-\bar x) \left(V(t) - d(\bar x)\right) - d'(\theta) |X(t,z)-\bar x| \:,
$$
for some $\theta \in [0,\infty)$. We rewrite the first term above as
$$
\mbox{sign}\ (X(t,z)-\bar x) \left(V(t) - d(\bar x)\right)
$$
$$
=\mbox{sign}\ (X(t,z)-\bar x) \{\left(V(t) - d(\bar x)\right) +\rho_0 (X(t,z)-\bar x)\} - \rho_0 |X(t,z)-\bar x|\:. 
$$
Thus, using (\ref{convenient1}), we obtain successively
$$
\frac{d}{dt} |X(t,z)-\bar x|\le -(\rho_0+\alpha) |X(t,z)-\bar x| + \sqrt{\rho_0 g(0,z)} e^{-\alpha t}\:,
$$
$$
0 \le |X(t,z)-\bar x| \le |z-\bar x| e^{-t (\rho_0+\alpha)} +\sqrt{\frac{g(0,z)}{\rho_0}} e^{-t (\rho_0+\alpha)} \left(e^{\rho_0 t} -1\right)\:.
$$
Finally, we obtain 
$$
|X(t,z)-\bar x| = O(e^{-\alpha t}) \quad \mbox{and hence}�\quad W_2(u,\rho_0 \delta_{\bar x})= O(e^{-\alpha t}).
$$
Going back to (\ref{convenient1}) we also conclude that
$$
|V(t)-d(\bar x)| = O(e^{-\alpha t}).
$$
\end{proof}
\subsection{Lifshitz-Slyozov with nucleation}
\label{sec:LSN}
We turn to the proof of Theorem~\ref{th:probB:asymp2},  where the Lifshitz-Slyozov system is complemented by a nucleation term~\eqref{LSN}. 

We first recall a simple lemma, variant of Gronwall's lemma, which is used several times in the proofs below.
\begin{Lemma}
Let $f\in {\cal C}^1(\R_+,\R_+)^+,$ $g\in {\cal C}(\R_+)$ and $h\in {\cal C}(\R_+,\R_+)$ satisfy
$$
\f{df}{dt}\leq-h(t)f(t) +h(t)g(t),\qquad h(t)\geq C>0, \quad \lim_{t\to \infty} g(t)=0.
$$
Then $\lim_{t\to\infty} f(t)=0.$
\label{lem:Gronwall}
\end{Lemma}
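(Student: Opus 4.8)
The plan is to run a standard integrating-factor (Grönwall-type) argument, the key point being that $h$ is bounded below by a positive constant, so the integrating factor grows at least exponentially and the memory of the initial value is damped out.

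First I would fix $\eps>0$ and invoke $\lim_{t\to\infty} g(t)=0$ to choose $T=T(\eps)$ such that $g(t)\le \eps$ for all $t\ge T$. On $[T,\infty)$ the assumed differential inequality then reads $\f{df}{dt}\le -h(t)f(t)+\eps\, h(t)$. Next I would set the integrating factor $\phi(t):=\exp\!\big(\int_T^t h(s)\,ds\big)$, which is ${\cal C}^1$, positive, and satisfies $\phi'=h\phi$. Multiplying the inequality by $\phi$ and regrouping gives
$$
\f{d}{dt}\big(f(t)\phi(t)\big)=\big(f'(t)+h(t)f(t)\big)\phi(t)\le \eps\, h(t)\phi(t)=\eps\,\phi'(t).
$$
Integrating from $T$ to $t$ and using $\phi(T)=1$ yields $f(t)\phi(t)\le f(T)+\eps(\phi(t)-1)\le f(T)+\eps\,\phi(t)$, hence
$$
0\le f(t)\le \f{f(T)}{\phi(t)}+\eps,\qquad t\ge T,
$$
where the lower bound uses $f\ge 0$.

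Finally, the hypothesis $h(t)\ge C>0$ gives $\phi(t)\ge e^{C(t-T)}\to\infty$ as $t\to\infty$, so $f(T)/\phi(t)\to 0$ and therefore $\limsup_{t\to\infty} f(t)\le \eps$. Since $\eps>0$ was arbitrary we get $\limsup_{t\to\infty} f(t)\le 0$, and combined with $f\ge 0$ this forces $\lim_{t\to\infty} f(t)=0$. There is no genuine obstacle in this argument; the only point requiring a little care is that $h$ is assumed neither bounded above nor integrable — only bounded below — and it is precisely this lower bound that makes $\phi$ blow up and annihilate the contribution of $f(T)$, which is what allows the conclusion for an arbitrary (possibly sign-changing) continuous $g$ tending to $0$.
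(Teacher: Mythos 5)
Your proof is correct and follows essentially the same integrating-factor argument as the paper; the only cosmetic difference is that you restart the integration at a time $T$ beyond which $g\le\eps$, whereas the paper integrates from $0$ and splits the resulting Duhamel integral at $t/2$, bounding the early part by $\Vert g\Vert_{L^\infty}e^{-\int_{t/2}^t h(u)\,du}$. Both arguments hinge on exactly the point you identify: the lower bound $h\ge C>0$ forces the integrating factor to blow up and wipe out the contribution of the initial value.
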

\begin{proof}
Since $\f{d}{dt} \big(f(t) e^{\int_0^t h(s) ds}\big) \leq h(t)g(t)e^{\int_0^t h(s) ds},$ integrating, we get
$$\begin{array}{ll} f (t) &\leq f(0) e^{-\int_0^t h(s) ds}+ \int_0^t h(s) g(s) e^{-\int_s^t h(u) du}
\\ 
&\leq f(0) e^{-Ct}+ \int_0^{\f{t}{2}} h(s) g(s) e^{-\int_s^t h(u) du} +\int_{\f{t}{2}}^{t} h(s) g(s) e^{-\int_s^t h(u) du}
\\ 
& \leq f(0) e^{-Ct} + \Vert g\Vert_{L^\infty} e^{-\int_{\f{t}{2}}^t h(u)du}
+ \sup_{s\in (\f{t}{2},t)} g(s) \to_{t\to\infty} 0.
\end{array}$$
\end{proof}


As a first  step we show that the number of fibrils increases without bound, as provided by the following statement.
\begin{Lemma}
\label{lem:planar}
Under the assumptions of Theorem~\ref{th:probB:asymp2} we have  
$$
 \rho(t):=\int_0^\infty u(t,x)dx \nearrow +\infty\quad \mbox{and} \quad V(t)\longrightarrow d(0)\quad \mbox{as}\ t\to \infty.
$$
\end{Lemma}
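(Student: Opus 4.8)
The plan is to exploit the nucleation boundary condition together with the lower bound $V(t)>d(0)\ge0$ from Lemma~\ref{lem:probA:exist}, which guarantees that the Heaviside factors in \eqref{bcB} are always active. First I would integrate the transport equation for $u$ in System~\eqref{LSN} over $x\in(0,\infty)$: the flux term contributes $-\big[(V(t)-d(x))u(t,x)\big]_0^\infty$, and since $u$ has finite mass the boundary term at $x=\infty$ vanishes, so we get
\[
\frac{d\rho}{dt}=\big(V(t)-d(0)\big)u(t,0)=V(t)^{i_0}>0 .
\]
Hence $\rho(t)$ is strictly increasing. This already gives monotonicity; what remains is to show $\rho(t)\to+\infty$ and $V(t)\to d(0)$, and these two facts are coupled.

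The key coupling comes from the monomer equation \eqref{VeqA}, which I would rewrite using the mass conservation \eqref{eq:mass} as in the proof of Proposition~\ref{pr:marginal_cases}. Writing $d(x)\le d(0)+\beta x$ from \eqref{as:d:inc} and using $\int_0^\infty x u\,dx=M-V(t)$, one obtains an upper bound $\frac{dV}{dt}\le (d(0)-V(t))\rho(t)+\beta(M-V(t))$; together with the lower bound $d(x)\ge d(0)+\alpha x$ one similarly gets a matching lower estimate. Since $V(t)-d(0)>0$ and $\rho(t)\nearrow$, the dominant balance in $\frac{dV}{dt}\approx -(V(t)-d(0))\rho(t)$ forces $V(t)-d(0)$ to be small once $\rho$ is large. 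The clean way to make this rigorous is to argue by contradiction: suppose $\rho(t)\to\rho_\infty<\infty$. Then $V(t)^{i_0}=\frac{d\rho}{dt}$ is integrable on $(0,\infty)$, so $\liminf_{t\to\infty}V(t)=d(0)$ (recall $V\ge d(0)$), hence $\liminf V(t)^{i_0}=d(0)^{i_0}$. If $d(0)>0$ this immediately contradicts integrability of $\frac{d\rho}{dt}=V(t)^{i_0}$ unless we also control $\limsup V$; a cleaner route is to bound $V(t)$ from below away from $d(0)$ for a fixed fraction of time. Actually the simplest argument: from $\frac{dV}{dt}\ge \alpha(M-V(t))+(d(0)-V(t))\rho(t)$ and $M>d(0)\ge V_0$... — more robustly, I would use the following. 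Since $\rho$ is increasing and bounded, $\rho(t)\ge\rho_0>0$, and the second moment bound from Lemma~\ref{lem:probA:exist} gives $\int_0^\infty x u(t,x)\,dx = M-V(t)$ bounded with bounded time-derivative; combined with $\frac{dV}{dt}\to 0$ in some averaged sense this pins $V(t)\to d(0)$, whence $\frac{d\rho}{dt}=V(t)^{i_0}\to d(0)^{i_0}$.

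If $d(0)>0$, then $\frac{d\rho}{dt}\to d(0)^{i_0}>0$ forces $\rho(t)/t\to d(0)^{i_0}\to\infty$, contradicting $\rho_\infty<\infty$; if $d(0)=0$, I need the sharper expansion: from the monomer equation $\frac{dV}{dt}\sim -V(t)\rho(t)+\alpha(M-V(t))\sim -V(t)\rho(t)+\alpha M$ (using $V\to0$), so $V(t)\rho(t)\to\alpha M$, i.e. $V(t)\sim \alpha M/\rho(t)$, and then $\frac{d\rho}{dt}=V(t)^{i_0}\sim (\alpha M)^{i_0}\rho(t)^{-i_0}$, giving $\rho(t)^{i_0+1}\sim (i_0+1)(\alpha M)^{i_0} t\to\infty$, again a contradiction. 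Either way $\rho(t)\to+\infty$. Finally, once $\rho(t)\to\infty$, plugging into $\frac{dV}{dt}\le (d(0)-V(t))\rho(t)+\beta(M-V(t))$ and applying Lemma~\ref{lem:Gronwall} with $f(t)=V(t)-d(0)\ge0$, $h(t)=\rho(t)\ge\rho_0$ and the forcing term $g(t)=\beta(M-d(0))/\rho(t)\to0$ (after absorbing the lower-order $\beta(M-V(t))$ term, which is bounded, into $h\cdot g$) yields $V(t)\to d(0)$, closing the argument.

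The main obstacle I expect is the $d(0)=0$ borderline case: there $V(t)\to d(0)=0$ means both $\rho$ and $V$ degenerate simultaneously, so the naive Gronwall argument giving $V\to d(0)$ is not quite enough to also extract $\rho\to\infty$ — one genuinely needs the coupled rate analysis $V\rho\to\alpha M$ to see that $\frac{d\rho}{dt}=V^{i_0}$ does not decay too fast to be integrable. Getting the constant $\alpha M$ right (rather than just $\gtrsim$) requires using both \eqref{as:d:inc} bounds $\alpha\le d'\le\beta$ to squeeze $\int_0^\infty(d(x)-d(0))u\,dx$ between $\alpha(M-V(t))$ and $\beta(M-V(t))$, which is why the $\alpha,\beta$ hypotheses appear. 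All the other steps are routine manipulations of \eqref{VeqA}, \eqref{eq:mass} and Lemma~\ref{lem:Gronwall}.
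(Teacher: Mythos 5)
Your opening step (integrating the equation in~\eqref{LSN} and using $V>d(0)$ from Lemma~\ref{lem:probA:exist} to get $\f{d\rho}{dt}=V^{i_0}>0$) and your closing step (once $\rho\to\infty$, applying Lemma~\ref{lem:Gronwall} to $f=V-d(0)$, $h=\rho$, $g=\beta M/\rho$) coincide with the paper's proof. The problem is the middle of the contradiction argument, in the case $d(0)=0$, which is exactly the delicate case. There you use ``$V\to 0$'' when all you have established from $V^{i_0}=\f{d\rho}{dt}\in L^1_t$ is $\liminf_{t\to\infty}V(t)=0$; upgrading a liminf to a limit requires an extra ingredient (the paper first shows $V\in{\cal C}^1_b$ from the two-sided inequality $(d(0)-V)\rho+\alpha(M-V)\le \f{dV}{dt}\le (d(0)-V)\rho+\beta(M-V)$, and then uses $V^{i_0}\in L^1$ together with the bounded derivative, a Barb\u{a}lat-type argument). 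More seriously, the pivot of your $d(0)=0$ branch, ``$\f{dV}{dt}\sim -V\rho+\alpha M$, so $V\rho\to\alpha M$'', is not derivable from the hypotheses: \eqref{as:d:inc} only gives the squeeze $\alpha(M-V)\le\int_0^\infty\big(d(x)-d(0)\big)u\,dx\le\beta(M-V)$, not an asymptotic equality with the constant $\alpha M$, and nothing proven so far forces $\f{dV}{dt}\to 0$ pointwise, which your dominant-balance reasoning implicitly assumes. The subsequent computation $\rho^{i_0+1}\sim(i_0+1)(\alpha M)^{i_0}t$ therefore rests on an unjustified step, and your own closing remark (that one ``gets the constant right by squeezing'') does not repair it.

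Two ways to close the gap. The paper's route: under the hypothesis $\rho\le\rho_\infty<\infty$, the two-sided inequality gives $V\in{\cal C}^1_b$, hence $V^{i_0}\in L^1$ forces $V(t)\to 0$; then the lower inequality yields
\begin{equation*}
\liminf_{t\to\infty}\f{dV}{dt}\;\ge\; d(0)\rho_\infty+\alpha M\;>\;0,
\end{equation*}
which is incompatible with $V(t)\to 0$ (and $V$ bounded), covering both $d(0)>0$ and $d(0)=0$ at once. Alternatively, a barrier argument salvages your idea without any asymptotic balance: if $\rho\le\rho_\infty$, then whenever $V\le \alpha M/(\rho_\infty+\alpha)$ one has $\f{dV}{dt}\ge -V\rho_\infty+\alpha(M-V)\ge 0$, so $V(t)\ge\min\big(V_0,\alpha M/(\rho_\infty+\alpha)\big)>0$ for all times, contradicting $V^{i_0}\in L^1$. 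Either of these must replace your heuristic ``$V\rho\to\alpha M$'' step; also note that in the case $d(0)>0$ the contradiction is immediate from $V^{i_0}\ge d(0)^{i_0}>0$, with no need for the $\limsup$ caveat you raise.
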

\begin{proof}
Integrating the equation for $u$ in~\eqref{LSN} and using the boundary condition at $x=0$, we have
$$
\f{d\rho}{dt} = \int_0^\infty \f{\p}{\p t} u(t,x) dx=-\int_0^\infty \f{\p}{\p x} \left(V(t)-d(x)\right)u(t,x)dx=V(t)^{i_0}\geq d(0)^{i_0} \geq 0
$$
  so that $\rho$ is increasing towards a limit $0<\rho_\infty\leq \infty$. 
Let us assume that it tends to a finite limit $\rho_\infty<0$ and argue by contradiction. Using Assumption~\eqref{as:d:inc} and the equation for $V$, we get
$$\left(d(0)-V(t)\right) \rho(t) + \alpha \left(M-V(t)\right)\leq \f{dV}{dt} \leq \left(d(0)-V(t)\right) \rho(t) + \beta \left(M-V(t)\right)
$$
and since $d(0)<V(t)\leq M$ and $\rho_0\leq \rho\leq \rho_\infty$
$$\left(d(0)-M\right) \rho_\infty \leq \f{dV}{dt} \leq \left(d(0)-V(t)\right) \rho_0 + \beta M,
$$
so that $V\in {\cal C}^1_b(\R_+).$ Since $V(t)^{i_0}=\f{d\rho}{dt},$ we also have $V^{i_0}\in L^1(\R_+),$ which combined with $V\in {\cal C}^1_b(\R_+)$ implies $\lim_{t\to\infty} V=0.$ Turning to the  double inequality for $V$, this implies
$$
0<d(0) \rho_\infty + \alpha M \leq \liminf_{t\to\infty} \f{dV}{dt} \leq \limsup_{t\to\infty} \f{dV}{dt} \leq d(0) \rho_\infty + \beta M,
$$
which contradicts that $V(t) \to 0.$ Hence $\lim_{t\to\infty} \rho (t)=+\infty.$
Since we have
$$
\f{dV(t)}{dt} \leq \left(d(0)-V(t)\right) \rho(t) + \beta \left(M-V(t)\right),
$$
which implies $\lim_{t\to\infty} V(t)=d(0)$ by applying Lemma~\ref{lem:Gronwall} to $f=V(t)-d(0)$, $h=\rho(t)$ and $g(t)=\f{\beta M}{\rho(t)}\to_{t\to\infty} 0.$
\end{proof}
Note that $V(t) \to d(0)$  also implies that 
\begin{equation}
\lim_{t\to\infty} \int_0^\infty x\, u(t,x)\, dx \rightarrow M-d(0).
\label{nucleation:direc}
\end{equation}
To prove the concentration of polymerized mass at zero, we then prove that  the second moment of $u(t,x)$ vanishes as stated in the following lemma.
\begin{Lemma}
\label{lem:2moment_vanishes}
Under the assumptions of Theorem~\ref{th:probB:asymp2} we have $\displaystyle \lim _{t\to\infty} M_2(t) =0.$
\end{Lemma}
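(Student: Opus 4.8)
The plan is to track the evolution of $M_2(t)=\frac12\int_0^\infty x^2 u(t,x)\,dx$ directly from the equation for $u$ in~\eqref{LSN}, and to show it obeys a differential inequality of the type handled by Lemma~\ref{lem:Gronwall}. Multiplying the transport equation by $\frac12 x^2$ and integrating by parts (the boundary terms at $x=0$ vanish because $x^2$ kills the nucleation flux contribution), we get
$$
\frac{d M_2}{dt}=\int_0^\infty x\,\bigl(V(t)-d(x)\bigr)u(t,x)\,dx
= V(t)\int_0^\infty x\,u(t,x)\,dx-\int_0^\infty x\,d(x)u(t,x)\,dx.
$$
Using Assumption~\eqref{as:d:inc} in the form $d(x)\ge d(0)+\alpha x\ge \alpha x$, the last integral is bounded below by $\alpha\int_0^\infty x^2u(t,x)\,dx=2\alpha M_2(t)$, whereas by mass conservation $\int_0^\infty x\,u(t,x)\,dx\le M$. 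Hence
$$
\frac{d M_2}{dt}\le V(t)\,M-2\alpha\,M_2(t)-d(0)\int_0^\infty x\,u(t,x)\,dx\le -2\alpha\,M_2(t)+ V(t)\,M.
$$

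Now I invoke Lemma~\ref{lem:planar}: under the assumptions of Theorem~\ref{th:probB:asymp2} we have $V(t)\to d(0)$ as $t\to\infty$. If $d(0)=0$ this already shows $V(t)M\to 0$ and Lemma~\ref{lem:Gronwall} (applied with $f=M_2$, $h\equiv 2\alpha$, $g(t)=V(t)M/(2\alpha)$) gives $M_2(t)\to 0$ directly. If $d(0)>0$ one needs a little more care, because the crude bound only yields $\limsup_t M_2(t)\le d(0)M/(2\alpha)$, not $0$. To close this gap I would not discard the term $-d(0)\int_0^\infty x\,u(t,x)\,dx$: combining it with $V(t)M$ and writing $V(t)M=d(0)M+(V(t)-d(0))M$, together with $\int_0^\infty x\,u(t,x)\,dx\to M-d(0)$ from~\eqref{nucleation:direc}, gives
$$
V(t)M-d(0)\int_0^\infty x\,u(t,x)\,dx \;=\; d(0)M - d(0)(M-d(0)) + o(1)\; \longrightarrow\; d(0)^2\neq 0,
$$
which is still not enough. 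So instead I would sharpen the lower bound on $\int_0^\infty x\,d(x)u\,dx$: since $d(x)\ge d(0)$ for all $x$ and $\ge d(0)+\alpha(x-R)$ is too weak, the right move is to keep the full inequality $d(x)\ge d(0)+\alpha x - \alpha\,\min(x,0)=d(0)+\alpha x$ and observe that then $\frac{d M_2}{dt}\le -2\alpha M_2 + \bigl(V(t)-d(0)\bigr)M + d(0)\bigl(M-\int_0^\infty x\,u\,dx\bigr)$; by~\eqref{nucleation:direc} the second and third terms both tend to $0$, so Lemma~\ref{lem:Gronwall} applies with $g(t)=\bigl[(V(t)-d(0))M+d(0)(M-\int x u)\bigr]/(2\alpha)\to 0$, yielding $M_2(t)\to0$.

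The main obstacle is precisely this $d(0)>0$ bookkeeping: one must resist throwing away the $-d(0)\int x u\,dx$ term, and instead pair it against $V(t)M$ so that the surviving forcing in the differential inequality for $M_2$ genuinely vanishes, using both $V(t)\to d(0)$ and the first-moment limit~\eqref{nucleation:direc}. Once the forcing is shown to be $o(1)$, the conclusion is immediate from the Gronwall-type Lemma~\ref{lem:Gronwall}; everything else (justifying the integration by parts, the vanishing of boundary terms, finiteness of $M_2(t)$ for positive times via Lemma~\ref{lem:probA:exist}) is routine.
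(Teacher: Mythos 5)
Your derivation of $\frac{dM_2}{dt}=\int_0^\infty x\,(V(t)-d(x))\,u(t,x)\,dx$ and your treatment of the case $d(0)=0$ are fine, but the case $d(0)>0$ does not close. The ``sharpened'' bound $d(x)\ge d(0)+\alpha x-\alpha\min(x,0)$ is identical to $d(x)\ge d(0)+\alpha x$ on $\R_+$ (there $\min(x,0)=0$), so the differential inequality you end with,
\begin{equation*}
\frac{dM_2}{dt}\le -2\alpha M_2+\bigl(V(t)-d(0)\bigr)M+d(0)\Bigl(M-\int_0^\infty x\,u(t,x)\,dx\Bigr),
\end{equation*}
has exactly the forcing $V(t)M-d(0)\int_0^\infty x\,u\,dx$ that you yourself computed a few lines earlier to converge to $d(0)^2\neq 0$. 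Indeed, by mass conservation $M-\int_0^\infty x\,u(t,x)\,dx=V(t)\to d(0)$, so the third term tends to $d(0)^2$, not to $0$: \eqref{nucleation:direc} says $\int x\,u\,dx\to M-d(0)$, not $\to M$. Hence Lemma~\ref{lem:Gronwall} is not applicable with that $g$, and your estimate only yields $\limsup_{t\to\infty}M_2(t)\le d(0)^2/(2\alpha)$. The loss occurs when you bound $V(t)\int_0^\infty x\,u\,dx$ by $V(t)M$, which overshoots by $V(t)\bigl(M-\int x\,u\,dx\bigr)=V(t)^2\to d(0)^2$.

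The fix (which is the paper's proof) is to split \emph{inside} the integral before estimating: write $V(t)-d(x)=\bigl(V(t)-d(0)\bigr)+\bigl(d(0)-d(x)\bigr)$, so that
\begin{equation*}
\frac{dM_2}{dt}=\bigl(V(t)-d(0)\bigr)\int_0^\infty x\,u\,dx+\int_0^\infty x\bigl(d(0)-d(x)\bigr)u\,dx
\le \bigl(V(t)-d(0)\bigr)M-2\alpha M_2(t),
\end{equation*}
using $V(t)>d(0)$ (Lemma~\ref{lem:probA:exist}), $\int_0^\infty x\,u\,dx\le M$ and $d(x)-d(0)\ge\alpha x$ from \eqref{as:d:inc}. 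Now the forcing $\bigl(V(t)-d(0)\bigr)M$ tends to $0$ by Lemma~\ref{lem:planar} alone, and Lemma~\ref{lem:Gronwall} with $f=M_2$, $h\equiv 2\alpha$, $g=\frac{1}{2\alpha}\bigl(V(t)-d(0)\bigr)M$ gives $M_2(t)\to 0$ in both cases $d(0)=0$ and $d(0)>0$ at once; \eqref{nucleation:direc} is not needed for this lemma.
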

\begin{proof}
We compute 
$$\begin{array}{rl}
 \displaystyle \frac{dM_2(t)}{dt}& =  \displaystyle \int_0^\infty x (V(t)-d(0) + d(0) - d(x)) u(t,x)\, dx
\\[15pt]
& \le \big(V(t)-d(0) \big) M - 2 \alpha M_2(t).
\end{array}
$$
Once again, we apply Lemma~\ref{lem:Gronwall} with $f=M_2,$ $h=2\alpha$ and $g=\f{1}{2\alpha}(V(t)-d(0))M$ which tends to $0$ thanks to Lemma \ref{lem:planar}.
\end{proof}
Lemma~\ref{lem:2moment_vanishes} together with~\eqref{lem:planar} implies the weak convergence of $xu(t,x)$ towards $\left(M-d(0)\right) \delta_0$ as stated in Theorem~\ref{th:probB:asymp2}: let $\phi\in {\cal C}_b^1(\R_+)$ a test function, we have for any $\ep>0$ small enough
$$\begin{array}{ll}
\vert \int_0^\infty x u(t,x) (\phi(x)-\phi(0)) dx\vert &=\int_\ep^\infty +\int_0^\ep x u(t,x) \vert \phi(x)-\phi(0)\vert dx 
\\ \\
&\leq \f{1}{\ep}\Vert \phi \Vert_{L^\infty} M_2(t) + \Vert \phi'\Vert_{L^\infty} M_2(t) \to_{t\to\infty} 0,
\end{array}
$$
and since $\int_0^\infty xu(t,x)dx \to M-d(0),$ we have the desired general convergence result of Theorem~\ref{th:probB:asymp2}. 

 We now prove the rates of convergence of $V$ and $\rho$ according to whether $d(0)>0$ or $d(0)=0.$
One of the key points is to relate the divergence rate of $\rho$ to the convergence rate of $V$, as stated in the following lemma.
\begin{Lemma}
\label{lem:balance}
Under the assumptions of Theorem~\ref{th:probB:asymp2}, we have
$$
\lim_{t\to \infty} \rho(t) (V(t)-d(0)) = d'(0)(M-d(0)).
$$
\end{Lemma}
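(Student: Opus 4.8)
The plan is to reduce the statement to the equation for $V$ in~\eqref{LSN}. Writing $W(t):=V(t)-d(0)$ (which is strictly positive by Lemma~\ref{lem:probA:exist}) and introducing
$$\phi(t):=\int_0^\infty\big(d(x)-d(0)\big)u(t,x)\,dx,$$
the $V$-equation reads $W'(t)=-\rho(t)W(t)+\phi(t)$, so the target quantity is $\sigma(t):=\rho(t)W(t)=\phi(t)-W'(t)$. The first step is to prove $\phi(t)\to d'(0)(M-d(0))=:L$. Expanding $d(x)-d(0)=d'(0)x+r(x)$ with $r(x):=\int_0^x\!\big(d'(s)-d'(0)\big)\,ds$, the leading term gives $d'(0)M_1(t)\to d'(0)(M-d(0))$ by~\eqref{nucleation:direc}, and it remains to show $\int_0^\infty r(x)u(t,x)\,dx\to0$. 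I would split this at a small threshold $\epsilon$: on $(0,\epsilon)$, $|r(x)|\le x\,\omega(\epsilon)$ with $\omega(\epsilon):=\sup_{s\in[0,\epsilon]}|d'(s)-d'(0)|\to0$ (continuity of $d'$ at $0$) while $\int_0^\epsilon x u\,dx\le M_1(t)$ stays bounded; on $(\epsilon,\infty)$, $|r(x)|\le(\beta-\alpha)x\le(\beta-\alpha)x^2/\epsilon$ by~\eqref{as:d:inc}, and $\int_\epsilon^\infty x^2 u\,dx\le 2M_2(t)\to0$ by Lemma~\ref{lem:2moment_vanishes}. Choosing $\epsilon$ small and then $t$ large gives the claim.

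With $\phi(t)\to L$ established, I would turn to a differential inequality for $\sigma$. Since $\rho'(t)=V(t)^{i_0}$ and $d(0)<V(t)\le M$, the remainder $\eta(t):=\rho'(t)W(t)=V(t)^{i_0}W(t)$ satisfies $\eta(t)\to0$ (using $W(t)\to0$ from Lemma~\ref{lem:planar}), and a direct differentiation of $\sigma=\rho W$ together with $W'=-\rho W+\phi$ yields
$$\sigma'(t)=\rho'(t)W(t)+\rho(t)W'(t)=-\rho(t)\sigma(t)+\rho(t)\phi(t)+\eta(t).$$
Now one performs a two-sided envelope estimate. Given $\delta>0$, pick $T$ so large that for $t\ge T$ one has $|\phi(t)-L|\le\delta$, $|\eta(t)|\le\delta$ and $\rho(t)\ge1$ (all possible by Lemma~\ref{lem:planar}). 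Then, absorbing $\delta\le\rho\delta$, one gets $\sigma'\le-\rho\big(\sigma-(L+2\delta)\big)$, so with $R(t):=\int_0^t\rho$ the function $\big(\sigma-(L+2\delta)\big)e^{R(t)}$ is nonincreasing on $[T,\infty)$; since $R(t)\to\infty$ this forces $\limsup_{t\to\infty}\sigma(t)\le L+2\delta$. The symmetric inequality $\sigma'\ge-\rho\big(\sigma-(L-2\delta)\big)$ shows $\big(\sigma-(L-2\delta)\big)e^{R(t)}$ is nondecreasing, whence $\liminf_{t\to\infty}\sigma(t)\ge L-2\delta$. Letting $\delta\to0$ gives $\lim_{t\to\infty}\sigma(t)=L=d'(0)(M-d(0))$, which is the assertion.

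The main obstacle I expect is the convergence $\phi(t)\to d'(0)(M-d(0))$, i.e.\ the control of the Taylor remainder $\int r(x)u\,dx$: because $d'$ is only known to be continuous with $\alpha\le d'\le\beta$, the pointwise estimate $|r(x)|\le(\beta-\alpha)x$ does not by itself vanish, so one genuinely needs to combine boundedness of the first moment with the decay of the second moment (the concentration of mass at $0$) and continuity of $d'$ at the origin. A secondary subtlety, which the envelope argument circumvents, is that one cannot simply invoke $\frac{dV}{dt}\to0$: although $V$ converges and $\frac{dV}{dt}$ turns out bounded, a Barbalat-type conclusion would require a bound on $\frac{d^2V}{dt^2}$ that is not available a priori, so the comparison for $\sigma$ must be carried out directly from its ODE.
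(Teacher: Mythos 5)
Your proof is correct and follows essentially the same route as the paper: you derive the ODE $\sigma'=-\rho\,\sigma+\rho\,\phi+\eta$ for $\sigma=\rho\,(V-d(0))$, show that the forcing $\phi(t)=\int_0^\infty (d(x)-d(0))u(t,x)\,dx$ converges to $d'(0)(M-d(0))$ via the first/second moment estimates, and conclude by a Gronwall-type comparison (the paper applies its Lemma~\ref{lem:Gronwall} to $w=\sigma-d'(0)(M-d(0))$, while you run a direct two-sided envelope argument). Your explicit proof of $\phi(t)\to d'(0)(M-d(0))$ and the two-sided comparison (which sidesteps the nonnegativity hypothesis in Lemma~\ref{lem:Gronwall}, since $w$ may change sign) simply fill in details the paper leaves implicit.
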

\begin{proof}
Let us first notice that the previous convergence results also
 imply that 
\begin{equation}
\label{0concentration}
\lim_{t\to \infty} \int_0^\infty (d(x)-d(0))u(t,x)\, dx = d'(0)(M-d(0)).
\end{equation}

We define $w(t)= \rho(t) \big ( V(t)- d(0)\big) -d'(0) \big( M-d(0)\big)$ and compute
$$\begin{array}{rl}
\displaystyle \frac{d w(t)}{dt} & = V^{i_0} \left(V(t)-d(0)\right) +\rho(t) \big(\int_0^\infty d(x) u(t,x) dx - V(t) \rho(t)\big)
\\ \\
&=V^{i_0} \left(V(t)-d(0)\right) 
+\rho(t) \big(\int_0^\infty (d(x)-d(0)) u(t,x) dx - (V(t) -d(0))\rho(t)\big)
\\ \\&= B(t) - \rho(t)  w(t) +\rho(t) C(t),
\end{array}$$
with
$$B(t) = V(t)^{i_0}  \big ( V(t)- d(0)\big), \qquad C(t) = \displaystyle \int_0^\infty \big[ d(x)-d(0) -d'(0) x \big] \; u(t,x) \, dx.
 $$
We then apply Lemma~\ref{lem:Gronwall} with $f=w,$ $h=\rho$ and $g=\f{B(t)}{\rho(t)} + C(t),$ since according to~\eqref{0concentration} and to the weak convergence result, we have
$
C(t)    \underset{ t \to \infty }{\longrightarrow} 0.
$ Hence $\lim_{t\to 0} w(t)=0,$ which proves the result.
\end{proof}
Using Lemma~\ref{lem:balance} allows to focus on the asymptotic rate of divergence of $\rho,$ from which the rate of convergence for $V(t)-d(0)$ follows. 
The two following lemmas now treat respectively the cases  $d(0)>0$ and $d(0)=0$.
\begin{Lemma}
Under the assumptions of Theorem~\ref{th:probB:asymp2}, assume moreover $d(0)>0$.  Then the number of fragments $\rho(t)$ satisfies $$
  \displaystyle\lim_{t\to\infty} \frac{\rho(t)}{t} = d(0)^{i_0}.$$
\label{lem:nfragments_diverge}
\end{Lemma}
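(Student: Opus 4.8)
The plan is to leverage the already-established facts from Lemmas~\ref{lem:planar}--\ref{lem:balance}, which pin down all the qualitative behaviour, and then extract the sharp asymptotic rate from the clean ODE $\frac{d\rho}{dt}=V(t)^{i_0}$ together with $V(t)\to d(0)>0$. Indeed, integrating the identity $\frac{d\rho}{dt}=V(t)^{i_0}$ from Lemma~\ref{lem:planar} gives $\rho(t)=\rho_0+\int_0^t V(s)^{i_0}\,ds$, so
$$
\frac{\rho(t)}{t}=\frac{\rho_0}{t}+\frac1t\int_0^t V(s)^{i_0}\,ds.
$$
Since $d(0)<V(t)\le M$ for all $t$ (Lemma~\ref{lem:probA:exist}) and $V(t)\to d(0)$ (Lemma~\ref{lem:planar}), the continuous function $s\mapsto V(s)^{i_0}$ is bounded and converges to $d(0)^{i_0}$, so its Cesàro average converges to the same limit; the term $\rho_0/t$ vanishes. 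This yields $\lim_{t\to\infty}\rho(t)/t=d(0)^{i_0}$ directly.

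The main steps in order are thus: (i) recall $\frac{d\rho}{dt}=V(t)^{i_0}$ and integrate it; (ii) invoke $V(t)\to d(0)$ and the uniform bounds to apply the Cesàro mean convergence (an elementary lemma: if $f$ is bounded and $f(t)\to\ell$ then $\frac1t\int_0^t f\to\ell$ — split the integral at $t/2$ exactly as in the proof of Lemma~\ref{lem:Gronwall}); (iii) conclude. Since $d(0)>0$, the limit $d(0)^{i_0}$ is strictly positive, which confirms that $\rho$ indeed grows linearly (consistently with $\rho\nearrow+\infty$ already known). Note that the companion statement in Theorem~\ref{th:probB:asymp2}, namely $t(V(t)-d(0))\to d'(0)(M-d(0))/d(0)^{i_0}$, is then an immediate consequence of combining this lemma with Lemma~\ref{lem:balance}: $t(V(t)-d(0))=\frac{t}{\rho(t)}\cdot\rho(t)(V(t)-d(0))\to \frac{1}{d(0)^{i_0}}\cdot d'(0)(M-d(0))$.

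There is essentially no serious obstacle here: the hard analytic work (the contradiction argument showing $\rho_\infty=\infty$, the second-moment estimate, the balance relation of Lemma~\ref{lem:balance}) has already been done upstream, and what remains is a soft Cesàro-averaging argument on a bounded convergent integrand. If one wants to avoid quoting a named lemma for Cesàro convergence, the cleanest route is to write, for any $\eps>0$, pick $T_\eps$ with $|V(s)^{i_0}-d(0)^{i_0}|<\eps$ for $s\ge T_\eps$, and bound
$$
\left|\frac1t\int_0^t V(s)^{i_0}\,ds-d(0)^{i_0}\right|\le \frac{1}{t}\int_0^{T_\eps}\bigl|V(s)^{i_0}-d(0)^{i_0}\bigr|\,ds+\eps,
$$
the first term being $O(1/t)$ since the integrand is bounded by $M^{i_0}+d(0)^{i_0}$ on $[0,T_\eps]$; letting $t\to\infty$ and then $\eps\to0$ finishes the proof. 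I would present it in this self-contained form rather than introducing extra machinery.
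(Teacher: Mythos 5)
Your proposal is correct and follows essentially the same route as the paper: both rest on the identity $\frac{d\rho}{dt}=V(t)^{i_0}$ (from the proof of Lemma~\ref{lem:planar}) together with $d(0)<V(t)\le M$ and $V(t)\to d(0)$, and then pass to the limit in the time average. The paper phrases this as the squeeze $d(0)^{i_0}<\frac{d\rho}{dt}\le d(0)^{i_0}+C\big(V(t)-d(0)\big)$ before integrating, while you integrate first and invoke Ces\`aro convergence of $V(s)^{i_0}$, which is the same computation in a slightly different order.
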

\begin{proof}
As already seen, we have
\begin{equation}\label{rho_B}
0< d(0)^{i_0}< \f{d\rho}{dt} = V(t)^{i_0}=\big(d(0)+V(t)-d(0)\big)^{i_0} \leq d(0)^{i_0} + C\big(V(t)-d(0)\big),
\end{equation}
which implies the result since $\lim_{t\to\infty} V(t)-d(0)=0.$
\end{proof}
\begin{Lemma}\label{lem:d0>0}
Under the assumptions of Theorem~\ref{th:probB:asymp2}, assume moreover $d(0)=0$.  Then the number of fragments $\rho(t)$ satisfies
 $$
  \displaystyle\lim_{t\to\infty} \frac{\rho(t)}{t^{\f{1}{i_0+1}}} = (1+i_0)^{\f{1}{i_0+1}}\left(d'(0)M\right)^{\f{i_0}{i_0+1}}.$$
\end{Lemma}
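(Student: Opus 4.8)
The plan is to convert the statement into an autonomous asymptotic ODE for $\rho$ alone, by combining the flux identity $\frac{d\rho}{dt}=V^{i_0}$ with the balance relation of Lemma~\ref{lem:balance}, and then to run a Gronwall-type squeeze.

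First I would record that, integrating the equation for $u$ in~\eqref{LSN} in $x$ and using the boundary condition together with $V(t)>d(0)=0$ (Lemma~\ref{lem:probA:exist}), one has $\frac{d\rho}{dt}=V(t)^{i_0}$; in particular $\rho$ is increasing, $V(t)>0$ for all $t$, and by Lemma~\ref{lem:planar} one has $\rho(t)\to\infty$. Next, Lemma~\ref{lem:balance} specialised to $d(0)=0$ reads $\rho(t)V(t)\to d'(0)M>0$, so that given $\eta\in(0,1)$ there is $T_\eta$ with
$$
(1-\eta)d'(0)M\le \rho(t)V(t)\le (1+\eta)d'(0)M \qquad (t\ge T_\eta).
$$
Writing $V(t)=\rho(t)V(t)/\rho(t)$ and substituting into $\frac{d\rho}{dt}=V^{i_0}$ gives, for $t\ge T_\eta$,
$$
\big((1-\eta)d'(0)M\big)^{i_0}\le \rho(t)^{i_0}\,\rho'(t)\le \big((1+\eta)d'(0)M\big)^{i_0}.
$$

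Then I would integrate this double inequality from $T_\eta$ to $t$: the middle term integrates exactly to $\big(\rho(t)^{i_0+1}-\rho(T_\eta)^{i_0+1}\big)/(i_0+1)$, so dividing by $t$ and letting $t\to\infty$ yields
$$
\big((1-\eta)d'(0)M\big)^{i_0}\le \liminf_{t\to\infty}\frac{\rho(t)^{i_0+1}}{(i_0+1)t}\le \limsup_{t\to\infty}\frac{\rho(t)^{i_0+1}}{(i_0+1)t}\le \big((1+\eta)d'(0)M\big)^{i_0}.
$$
Letting $\eta\downarrow 0$ gives $\rho(t)^{i_0+1}\sim (i_0+1)(d'(0)M)^{i_0}\,t$; taking the $(i_0+1)$-th root yields $\rho(t)/t^{1/(i_0+1)}\to (1+i_0)^{1/(i_0+1)}(d'(0)M)^{i_0/(i_0+1)}$, which is the claim. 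Feeding this back into $\rho(t)V(t)\to d'(0)M$ then produces the companion limit $t^{1/(i_0+1)}V(t)\to \big(d'(0)M/(1+i_0)\big)^{1/(i_0+1)}$ stated in Theorem~\ref{th:probB:asymp2}.

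I do not anticipate a genuine obstacle: once Lemmas~\ref{lem:planar} and~\ref{lem:balance} are available, the argument is essentially a one-line squeeze. The only points needing a little care are that $V$ stays positive (Lemma~\ref{lem:probA:exist}), so that dividing by $\rho$ and inverting the relation are legitimate, and that the $o(1)$ error in $\rho(t)^{i_0+1}=(i_0+1)(d'(0)M)^{i_0}t(1+o(1))$ survives the $(i_0+1)$-th root, which it does by continuity of $x\mapsto x^{1/(i_0+1)}$. In effect, all the real work has already been done in establishing the convergence of $V$ and the balance identity.
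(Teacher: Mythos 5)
Your proposal is correct and follows essentially the same route as the paper: both combine $\frac{d\rho}{dt}=V^{i_0}$ with the balance $\rho(t)V(t)\to d'(0)M$ from Lemma~\ref{lem:balance}, bound $\rho^{i_0}\rho'$ between $(d'(0)M\mp\epsilon)^{i_0}$ past a time $T$, integrate, and take the $(i_0+1)$-th root before letting the error tend to zero. The companion limit for $t^{1/(i_0+1)}V(t)$ is obtained in the paper exactly as you do, by feeding the result back into the balance relation.
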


\begin{proof}[Proof of Theorem \ref{th:probB:asymp2},\emph{\ref{th:pbB:extra_info:d(0)=0}}]: 
With the information of \eqref{nucleation:direc} and of Lemma~\ref{lem:balance} we can determine the asymptotic behavior of $\rho$ and $V$ in turn. Given $0<\epsilon<d'(0)M$, we use Lemma \ref{lem:balance} to find $T>0$ such that 
$$
|V(t)\rho(t)-d'(0)M|<\eps,\quad \forall t>T.
$$
We decompose now 
$$
\frac{d\rho}{dt} = V(t)^{i_0} \chi_{(0,T)} + \frac{1}{\rho(t)^{i_0}}V(t)^{i_0}\rho(t)^{i_0}\chi_{(T,\infty)}
$$
for $t>T$, so that
$$
V(t)^{i_0} \chi_{(0,T)} + \frac{(d'(0)M-\epsilon)^{i_0}}{\rho(t)^{i_0}}\chi_{(T,\infty)} < \frac{d\rho}{dt} < V(t)^{i_0} \chi_{(0,T)} + \frac{(d'(0)M+\epsilon)^{i_0}}{\rho(t)^{i_0}}\chi_{(T,\infty)}.
$$
Multiplying by $(1+i_0) \rho(t)^{i_0}$ and integrating in time on $(T,t)$,
$$
\rho^{i_0+1}(T) + (1+i_0)(t-T)(d'(0)M-\epsilon)^{i_0}< \rho^{i_0+1}(t) <\rho^{i_0+1}(T) + (1+i_0) (t-T)(d'(0)M+\epsilon)^{i_0}
$$
with
$$
\rho(T)=\rho(0)+\int_0^T V(\tau)^{i_0}\, d\tau.
$$
Therefore, as $\epsilon$ is arbitrary,
$$
\lim_{t\to \infty} \frac{\rho(t)}{t^{1/(i_0+1)}} =(1+i_0)^{\f{1}{i_0+1}}\left(d'(0)M\right)^{\f{i_0}{i_0+1}}
$$
so that
$$
 \lim_{t\to \infty} t^{1/(i_0+1)} V(t) = \left(\frac{d'(0)M}{1+i_0}\right)^{1/(i_0+1)}.
$$
This concludes the proof of Lemma~\ref{lem:d0>0} and of Theorem~\ref{th:probB:asymp2}, \emph{\ref{th:pbB:extra_info:d(0)=0}}.
\end{proof}
%
\begin{Remark}
Using computations like those in the proof of Lemma \ref{lem:2moment_vanishes} we may deduce that the second moment $M_2(t)$ vanishes at least as fast as $V(t)-d(0)$ does.
\end{Remark}

\section{Fragmentation as a possibly stabilizing secondary process}
\label{sec:proof_fragmentation}


This section is devoted to the proofs of Theorem~{\ref{th:probB:asymp2:2}} and~\ref{th:steady}, which consider the two opposite cases, respectively increasing or decreasing ratio polymerization/depolymerization, \emph{i.e.} decreasing or increasing total growth rate $V(t)-d(x).$


\subsection{Increasing depolymerization rate}
\label{sec:frag:instab}

We prove here  Theorem~\ref{th:probB:asymp2:2}. 
As expected, under our assumptions on the fragmentation rate, the same asymptotic as in Theorem~\ref{th:probB:asymp2} holds but is still faster, thus simplifying rather than complexifying the proof. 
\begin{proof}[Proof of Theorem~\ref{th:probB:asymp2:2}]
We recall that the total mass conservation still holds, $M$ is constant in \eqref{eq:mass},  because fragmentation is conservative in mass, but increases the number of polymers. 
\\[2pt]
{\em First step: $d(0)<V(t)\leq M.$}
This comes from Lemma~\ref{lem:probA:exist},
 whose proof remains valid in the present case.
\\[2pt]
{\em Second step:  $\rho(t)\geq \rho(0) e^{B_m t}.$}
Integrating the equation, and using that $V(t)>d(0),$ we obtain the result since
$$
\frac{d\rho}{dt} = \ep V(t)^{i_0}+ \int_0^\infty B(x)u(t,x)\, dx\geq B_m \rho(t).
$$
\\
{\em Third step: $0<V(t)-d(0)\leq Ct e^{-B_m t}$ }
 We have
$$\begin{array}{rl}
\displaystyle \frac{d}{dt} \big( V(t)- d(0) \big)& = \int_0^\infty (d(x)-d(0)) u(t,x)\, dx - \rho(t) (V(t)-d(0))
\\[5pt]
& \leq \beta M -\rho(t)(V(t)-d(0))
\end{array},
$$
so that
$$\begin{array}{ll}
 V(t)- d(0) &\leq \big( V_0- d(0) \big) e^{-\int_0^t \rho(s)ds } +\beta M \int_0^t e^{-\int_s^t \rho(\sigma)d\sigma } ds 
\\ \\ &\leq \big( V_0- d(0) \big) e^{-\int_0^t \rho(0) e^{B_m s}ds } +\beta M \int_0^t e^{-\int_s^t \rho(0) e^{B_m \sigma}d\sigma } ds
\\ \\&\leq\big( V_0- d(0) \big) e^{-\f{\rho(0)}{B_m} (e^{B_m t}-1) } +\beta M \int_0^t e^{-\f{\rho(0)}{B_m} (e^{B_m t}-e^{B_m s}) } ds.
\end{array}
$$
Next, we use the following convexity  inequality
$$
\f{e^{B_m t}-1}{t}\leq \f{e^{B_m t} - e^{B_m s}}{t-s}, \qquad 0\leq s \leq t , 
$$
to obtain
$$\begin{array}{ll}
 \int_0^t \exp\biggl(-\f{\rho(0)}{B_m} (e^{B_m t}-e^{B_m s})ds \biggr) ds & \leq  
\int_0^t \exp\biggl(-\f{\rho(0)}{B_m} (e^{B_m t}-1)(1-\f{s}{t})ds \biggr) ds
\\[5pt]
&\leq \f{tB_m}{ \rho(0) (e^{B_m t}-1)}.
\end{array}$$
We deduce that $V(t)$ converges to $d(0)$ with the rate $te^{-{B_m t}}$ when $t\to \infty$. 

To conclude the proof of Theorem~\ref{th:probB:asymp2:2}, it remains to show the concentration of the polymerized mass at zero size, which follows from the vanishing the second moment, as in the proof of Theorem~\ref{th:probB:asymp2}.
\end{proof}

%



\subsection{Depolymerization as a stabilizing mechanism}
\label{sec:steady}
A particular feature of Theorem~\ref{th:probB:asymp2:2} is that the results remain unchanged if we take $d(x)\equiv 0$; then $\rho$ still increases exponentially, and since $\f{dV}{dt}=-\rho(t) V(t)$,  $V(t)$ decays to $0$ faster than exponentially. Then,  we can use the fragmentation term, with $c$ in \eqref{as:frag},
to prove that 
$$
\f{dM_2}{dt} \leq - cB_m M_2 +2V(t) M,
$$
which also implies the exponential decay  of $M_2 (t)$. This observation initially motivated our study: one of the most frequently used model for protein polymerization, namely the growth-fragmentation model, with or without nucleation, leads to an asymptotic state of dust rather than of stable large polymers. An increasing depolymerization rate maintains this asymptotic behavior. We  see in Theorem~\ref{th:steady} that, surprisingly, a decreasing depolymerization rate, meaning that large polymers are more stable than small ones, is able to stabilize the system.

Theorem~\ref{th:steady} thus states the { existence of a positive steady state of}  the model  \eqref{eq:mass}--\eqref{bcB} with \emph{decreasing} depolymerization rates. 
In this case, the presence or absence of nucleation does not really play a role in the long-term dynamics, since we  prove below that at $x=0$ we  have $d(0)>\bar V$, the steady state of $V(t)$.

The proof  of Theorem~\ref{th:steady} follows the lines for instance of Theorem~4.6. in~\cite{BP} and also used  and detailed e.g. in~\cite{CDG,DG} for studies of the eigenvalue problem for the growth-fragmentation equation.  We decompose the proof in several steps which are stated as additional theorems.  Indeed, { we encounter a specific difficulty here because the growth rate   $V-d(x)$ vanishes at some nonnegative point $x_0$, so that the solution operator is not easily defined near $x_0$. Indeed, a vanishing transport speed may generate a Dirac solution, however the positive absorbtion term near $x_0$ has a regularizing effect on the solution, which belongs to a $L^p$ space. } For that reason, when solving the regularized problem, we first consider the operator only for $x>d^{-1}(V),$ and then extend it for smaller $x$ by successive use of the Banach-Picard fixed point theorem. 

\begin{Theorem}\label{th:KR}
Assume that $d$ satisfies \eqref{as:d},  that \eqref{customary} and \eqref{as:frag}--\eqref{Bandk} hold true. 

For $\ep>0$, $R>0$ and a given $V\in (\inf_x d(x), d(0))$, setting $x_0=d^{-1}(V)$ and $x_\ep=x_0+\ep$, there exists a unique couple  
$$
(\lambda_V^{\ep,R},U_{V}^{\ep,R})\in \R\times {\cal C}([x_\ep,R],\R_+^*)
$$
solution to the following eigenvalue problem on $[x_\ep,R]:$
\begin{equation}\label{eq:truncated}
\left\{\begin{array}{l}
\f{\p}{\p x} (( V - d(x))U_V^{\ep,R}) +\lambda_{V}^{\ep,R} U_V^{\ep,R}+B (x) U_V^{\ep,R} =2\int_x^R B(y) U_V^{\ep,R} (y)\kappa (y,x) dy,
\\ \\
(V-d(x_\ep))U_V^{\ep,R}{(x_\ep)}=\ep\int_{x_\ep}^R U_V^{\ep,R} (y) dy,\quad U_V^{\ep,R}(x)>0,\quad \int_{x_\ep}^R U_V^{\ep,R} (x)dx=1.
\end{array} \right.\end{equation}
\end{Theorem}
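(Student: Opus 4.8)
The plan is to establish Theorem~\ref{th:KR} by a Krein--Rutman argument applied to a suitably compact resolvent operator on the interval $[x_\ep,R]$. Since the transport coefficient $V-d(x)$ is strictly positive on $[x_\ep,R]$ (because $d$ is decreasing and $x_\ep>x_0=d^{-1}(V)$), the operator behaves like a genuine first-order transport operator there, with no degeneracy to worry about, and the incoming boundary is at $x=x_\ep$. First I would rewrite the stationary equation by the method of characteristics: for fixed $\lambda$, solving $\f{\p}{\p x}((V-d(x))U)+(\lambda+B(x))U = 2\int_x^R B(y)U(y)\kappa(y,x)\,dy$ with the nonlocal boundary condition $(V-d(x_\ep))U(x_\ep)=\ep\int_{x_\ep}^R U(y)\,dy$ amounts to a fixed point for $U$. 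The integrating factor $\exp\bigl(\int_{x_\ep}^x \f{\lambda+B(s)}{V-d(s)}\,ds\bigr)$ is well-defined and bounded on the compact interval, so the associated solution map $\mathcal{G}_\lambda$ sending a nonnegative density $g$ to the solution of the linear inhomogeneous problem (with right-hand side built from $g$ and the boundary term from $g$) is a bounded positive linear operator on ${\cal C}([x_\ep,R])$; moreover, because it involves an integration in $x$, it maps into a space of functions with controlled modulus of continuity, hence is compact on ${\cal C}([x_\ep,R])$ by Arzel\`a--Ascoli.

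Next I would verify the strong positivity (irreducibility) needed for the Krein--Rutman theorem: the assumption \eqref{as:frag}, together with $B(x)\geq B_m>0$ for $x\geq{\cal A}$ from \eqref{Brates:2} and the boundary term $\ep\int U$, ensures that any nonnegative nontrivial $g$ is mapped to a strictly positive function on the open interval, so that $\mathcal{G}_\lambda$ has a positive spectral radius with a positive eigenfunction. The parameter $\lambda$ is then tuned so that the spectral radius of $\mathcal{G}_\lambda$ equals $1$: by standard monotonicity, $\lambda\mapsto r(\mathcal{G}_\lambda)$ is continuous and strictly decreasing, with $r(\mathcal{G}_\lambda)\to\infty$ as $\lambda\to-\infty$ (or rather as $\lambda$ decreases past the point where the effective decay is overwhelmed) and $r(\mathcal{G}_\lambda)\to 0$ as $\lambda\to+\infty$; the intermediate value theorem provides a unique $\lambda=\lambda_V^{\ep,R}$ with $r(\mathcal{G}_{\lambda_V^{\ep,R}})=1$, and the corresponding eigenfunction, normalized by $\int_{x_\ep}^R U=1$, is the desired $U_V^{\ep,R}$. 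Uniqueness of the pair follows from the simplicity of the dominant eigenvalue in the Krein--Rutman theorem under irreducibility.

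The main obstacle I anticipate is not the degeneracy of the transport term — that has been deliberately avoided by working on $[x_\ep,R]$ with $x_\ep>x_0$ — but rather checking carefully that the solution operator $\mathcal{G}_\lambda$ is well-defined and positive despite the nonlocal boundary condition, which couples the value at $x_\ep$ to the integral of $U$ over the whole interval. One must ensure that for the relevant range of $\lambda$ this coupled problem (transport equation plus fragmentation gain term plus boundary term, all linear in the unknown once $g$ is frozen) is uniquely solvable with a nonnegative solution; this requires either absorbing the boundary coupling into a contraction argument for large enough $\lambda$ or treating it as part of the compact positive operator and invoking the spectral characterization directly. A secondary technical point is controlling the behavior of the integrating factor and of $r(\mathcal{G}_\lambda)$ uniformly enough in $\lambda$ to get the limits at $\pm\infty$ needed to pin down $\lambda_V^{\ep,R}$; here the lower bound \eqref{Brates:2} on $B$ away from $0$ and the finiteness \eqref{Brates:3} of $\sup B$ are what make these limits work. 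Once these are in place, the conclusion is the standard Krein--Rutman output: existence, positivity, and uniqueness of $(\lambda_V^{\ep,R},U_V^{\ep,R})$.
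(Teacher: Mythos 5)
Your proposal is correct in substance and rests on the same two pillars as the paper (Krein--Rutman on ${\cal C}([x_\ep,R])$, with compactness and strong positivity coming from the non-degenerate transport on $[x_\ep,R]$ and from the $\ep$-regularized boundary term), but it is organized differently. The paper does not parametrize by $\lambda$: it adds a large shift $\mu$, constructs the resolvent-type map $f\mapsto n$ by a Banach--Picard fixed point in which the fragmentation gain and the nonlocal boundary term are frozen in $m$, and applies Krein--Rutman once to that map, so that $\lambda_V^{\ep,R}$ is read off directly from the principal eigenvalue of the resolvent and no study of a spectral radius as a function of $\lambda$ is needed. You instead freeze $\lambda$, solve the linear problem explicitly by the integrating factor $\exp\bigl(\int_{x_\ep}^x \f{\lambda+B(s)}{V-d(s)}\,ds\bigr)$ (legitimate, since the gain and boundary terms are frozen in $g$, which dispenses with the contraction step), apply Krein--Rutman to each $\mathcal{G}_\lambda$, and locate the eigenvalue by solving $r(\mathcal{G}_\lambda)=1$ through strict monotonicity and continuity of $\lambda\mapsto r(\mathcal{G}_\lambda)$; this buys a very transparent uniqueness of $\lambda$ (and, with simplicity, of $U$), at the price of the extra shooting analysis. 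Two points in that analysis deserve care if you write it out: (i) the limit $r(\mathcal{G}_\lambda)\to 0$ as $\lambda\to+\infty$ does not follow from the operator norm of $\mathcal{G}_\lambda$, because the boundary contribution $\ep\int g/(V-d(x_\ep))$ at $x=x_\ep$ does not decay in $\lambda$; you should pass through the second iterate, $r(\mathcal{G}_\lambda)^2\le\Vert\mathcal{G}_\lambda^2\Vert$, where the intermediate integration $\int_{x_\ep}^R \mathcal{G}_\lambda g\,dx=O(1/\lambda)$ produces the small factor (the limit $r\to\infty$ as $\lambda\to-\infty$ is fine, e.g.\ already on the rank-one boundary part); (ii) like the paper, your compactness and continuity statements implicitly assume $B$ and $\kappa$ continuous, the general case being handled by the same standard regularization the paper invokes; note also that strong positivity follows from the boundary term $\ep\int_{x_\ep}^R U$ alone, so \eqref{as:frag} and \eqref{Brates:2} are not needed at this stage.
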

\begin{proof}
We follow the proof of~\cite{BP} Theorem~6.6. or~\cite{DG} Theorem~3 for instance, assuming $B$ and $\kappa$ continuous, otherwise a standard regularization procedure is implemented. For a given $f\in {\cal C}([x_\ep,R],\R_+^*)$, we  apply first the Banach-Picard theorem to the operator 
$$
T_f: {\cal C}([x_\ep,R]) \to {\cal C}([x_\ep,R]), \quad m \to n=T_f(m)
$$
defined thanks to the equation for $n$
$$\left\{\begin{array}{l} 
\mu n + \f{\p}{\p x}( ( V - d(x))n )+B (x) n =2\int_x^R B(y) m (y)\kappa (y,x) dy +f(x),
\\[5pt]
(V-d(x_\ep))n({x_\ep})=\ep\int_0^R m(y) dy.
\end{array}\right.$$
For $\mu>0$ large enough,  $T_f$ is indeed a strict contraction and thus $T_f$ has a unique fixed point. Then we may apply the 
 Krein-Rutman theorem to the operator $f\mapsto n$ with $T_f(n)=n,$ for which strong positivity, continuity and compactness in ${\cal C}([x_\ep,R])$ follows from arguments in the papers mentioned above.
\end{proof}

\begin{Theorem}
\label{th:truncated}
With the assumptions and notations of Theorem~\ref{th:KR}, 
\begin{itemize}
\item  as $\ep\to 0$, we obtain a  weak solution $(\lambda_V^R,U_V^R)\in \R\times L^\infty((x_0,R),\R_+)$ to the limit system of~\eqref{eq:truncated} with $\ep\to 0.$

\item The function $V\mapsto \lambda_V^{R}$ is continuous, and for \comJuan{$V$ in a neighbourhood of $d(0)$ and sufficiently large $R$}, we have $\lambda_V^{R} >0$. 

\item
For $V$ such that $\lambda_V^R> -B(x_0)+d'(x_0),$ we can  extend $U_V^R$ to $[0,R]$ and obtain, up to  renormalisation, a nonnegative solution, for $x\in (0,R)$,  of
\end{itemize}
$$\left\{\begin{array}{c}
\f{\p}{\p x} (( V - d(x))U_V^{R}) +\lambda_{V}^{R} U_V^{R}+B (x) U_V^{R} =2\int_x^R B(y) U_V^{R} (y)\kappa(y,x) dy, 
\\ \\
U_V^{R}(x)\geq 0,\qquad \int_{0}^R U_V^{R} (x)dx=1.
\end{array}\right.$$

\end{Theorem}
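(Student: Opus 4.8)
\emph{Overall strategy.} All three assertions are obtained by working with the eigenpairs $(\lambda_V^{\ep,R},U_V^{\ep,R})$ furnished by Theorem~\ref{th:KR} and extracting information by testing the equation in \eqref{eq:truncated} against well-chosen weights, in the spirit of the eigenvalue analysis of \cite{BP,CDG,DG}. For the limit $\ep\to 0$ the first step is to derive bounds on $(\lambda_V^{\ep,R},U_V^{\ep,R})$ that are uniform for $\ep\in(0,\ep_0)$. Testing \eqref{eq:truncated} against the constant $1$ and using \eqref{customary}, the boundary relation $(V-d(x_\ep))U_V^{\ep,R}(x_\ep)=\ep$ and $V-d(R)>0$, one gets the number-balance identity
$$
\lambda_V^{\ep,R}=\ep-(V-d(R))U_V^{\ep,R}(R)+\int_{x_\ep}^R B(y)U_V^{\ep,R}(y)\Big(1-2\int_0^{x_\ep}\kappa(y,z)\,dz\Big)dy,
$$
while testing against $x$ gives the mass-balance identity $\lambda_V^{\ep,R}\int_{x_\ep}^R xU_V^{\ep,R}=\ep x_\ep+\int_{x_\ep}^R(V-d(x))U_V^{\ep,R}-R(V-d(R))U_V^{\ep,R}(R)$; combined with \eqref{Brates:3} and the normalisation these yield two-sided bounds $|\lambda_V^{\ep,R}|\le C(V,R)$, locally uniform in $V$. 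For $U_V^{\ep,R}$ one rewrites the equation as a linear transport equation for $J:=(V-d(x))U_V^{\ep,R}$, namely $J'+\tfrac{\lambda_V^{\ep,R}+B(x)}{V-d(x)}\,J=2\int_x^R B(y)U_V^{\ep,R}(y)\kappa(y,x)\,dy$, with bounded right-hand side; since $V-d(x)\ge c_\delta>0$ on $[x_0+\delta,R]$ this gives $\mathcal{C}^1$ bounds for $U_V^{\ep,R}$ there, while near $x_0$ the coefficient behaves like $\tfrac{\lambda_V^{\ep,R}+B(x_0)}{-d'(x_0)(x-x_0)}$ and integrating the ODE shows $J(x)=O(x-x_0)$, hence an $L^\infty$ bound for $U_V^{\ep,R}$ up to $x_\ep$. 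One then extracts $\lambda_V^{\ep,R}\to\lambda_V^R$ and $U_V^{\ep,R}\rightharpoonup U_V^R$ weak-$*$ in $L^\infty$, passes to the limit in the weak formulation (the boundary term at $x_\ep$ tends to $0$, so the limiting flux satisfies $(V-d(x))U_V^R(x)\to 0$ as $x\to x_0^+$), and checks $U_V^R\ge 0$, $\int_{x_0}^R U_V^R=1$; uniqueness of the limiting eigenpair — hence convergence of the whole family — follows from a Krein--Rutman argument as in \cite{BP}.

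\emph{Continuity and positivity of $\lambda_V^R$.} Since $x_0=d^{-1}(V)$ depends $\mathcal{C}^1$-ly and monotonically on $V$, after the affine change of variable sending $[x_0,R]$ to a fixed interval the estimates above become locally uniform in $V$; hence for $V_n\to V$ the eigenpairs are precompact, every limit point solves the problem for the limit $V$, and by uniqueness it equals $(\lambda_V^R,U_V^R)$, which gives continuity. For positivity, pass $\ep\to 0$ in the number-balance identity to obtain
$$
\lambda_V^R=-(V-d(R))U_V^R(R)+\int_{x_0}^R B(y)U_V^R(y)\Big(1-2\int_0^{x_0}\kappa(y,z)\,dz\Big)dy,
$$
together with $\lambda_V^R\int_{x_0}^R xU_V^R=\int_{x_0}^R(V-d(x))U_V^R-R(V-d(R))U_V^R(R)$. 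As $V\uparrow d(0)$ one has $x_0\to 0$, so by \eqref{Bandk} the quantity $\int_0^{x_0}\kappa(y,z)\,dz$ is negligible on the bulk of $(x_0,R)$, making the integrand in the first identity positive there; using \eqref{Brates:2}, the $L^\infty$ bound on $U_V^R$, and taking $R$ large enough to control the outflow term $(V-d(R))U_V^R(R)$, one concludes $\lambda_V^R>0$ uniformly for $V$ in a left neighbourhood of $d(0)$.

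\emph{Extension to $[0,R]$.} When $\lambda_V^R>-B(x_0)+d'(x_0)$, one defines $U_V^R$ on $(0,x_0)$ by the Banach--Picard fixed point theorem: there the growth rate $V-d(x)$ is negative, so the characteristics flow from $x_0$ towards $0$; writing once more the ODE for $J=(V-d(x))U_V^R$ with source $2\int_x^R B(y)U_V^R(y)\kappa(y,x)\,dy$ — which couples the unknown on $(0,x_0)$ with the already constructed values on $[x_0,R]$ — and imposing the matching condition $J(x_0^-)=0$ inherited from the previous step, one obtains a contraction on $\mathcal{C}([\delta,x_0])$ (or $L^1(\delta,x_0)$) with constant controlled uniformly in $\delta$; letting $\delta\to 0$, the condition $\lambda_V^R+B(x_0)>d'(x_0)$ is exactly what forces $J(x)=O(x_0-x)$ near $x_0^-$, hence the bounded, integrable extension up to $x=0$. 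Nonnegativity is preserved because the source is nonnegative and transport along characteristics preserves sign, and a final renormalisation gives $\int_0^R U_V^R=1$.

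\emph{Main obstacle.} The crux is the degenerate point $x_0=d^{-1}(V)$ where $V-d(x)$ vanishes: the uniform $L^\infty$ control needed to pass to the limit $\ep\to 0$ and the possibility of continuing $U_V^R$ across $x_0$ both rest on the boundary-layer analysis of the ODE for $J$, and the sharp threshold $\lambda_V^R>-B(x_0)+d'(x_0)$ separating ``extendable with an integrable eigenfunction'' from ``not extendable'' emerges from precisely that computation. The other genuinely quantitative point is the uniform positivity $\lambda_V^R>0$ as $V\uparrow d(0)$, where one must simultaneously send $x_0\to 0$ and $R\to\infty$ while keeping control of the outflow at $R$.
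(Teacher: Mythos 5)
Your overall strategy coincides with the paper's (compactness in $\ep$, then continuity/positivity of $\lambda_V^R$, then a Banach--Picard extension across the degenerate point $x_0$ governed by the threshold $\lambda_V^R>-B(x_0)+d'(x_0)$ and by assumption \eqref{Bandk}), and your identification of the boundary-layer mechanism at $x_0$ is the right one. However, two of your arguments have genuine gaps where you depart from the paper. First, your continuity of $V\mapsto\lambda_V^R$ rests on ``uniqueness of the limiting eigenpair by Krein--Rutman'': Krein--Rutman applies to the regularized problem of Theorem~\ref{th:KR} on $[x_\ep,R]$, but uniqueness for the limit problem, in which the transport speed vanishes at $x_0$ and the equation only holds weakly, is not established (nor claimed in the paper, which instead obtains continuity by following Lemma~3.1 of \cite{M2}); without it, limit points of $\lambda_{V_n}^R$ need not equal $\lambda_V^R$, so your compactness argument does not close. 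Second, your positivity argument near $V=d(0)$ fails as stated: in your number-balance identity the outflow term $(V-d(R))U_V^R(R)$ is not made small by taking $R$ large, since $V-d(R)\to V-d(\infty)>0$ by \eqref{as:d}, and an $L^\infty$ bound on $U_V^R$ gives no smallness of the pointwise value $U_V^R(R)$; one needs a decay estimate in $x$ (of the type $\int B(x)x^kU\le C_k$ proved later in Theorem~\ref{th:estim}) or, as the paper does, the observation that $x_0\to 0$ reduces the problem to the truncated growth-fragmentation eigenproblem, whose eigenvalue is positive for $R$ large by \cite{DG,BP}. (Minor related points: your mass-balance identity omits the mass lost by fragmentation below $x_\ep$, and your claim $J(x)=O(x-x_0)$ for the $\ep$-problem implicitly uses a sign of $\lambda_V^{\ep,R}+B(x_0)$ that you have not controlled at that stage.)

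For the extension step your idea is correct but the contraction statement is not: the computation using \eqref{Bandk} produces a Lipschitz constant of order $C\,(\mbox{interval width})^\gamma$ on the interval abutting $x_0$, so on the whole interval $[\delta,x_0]$, whose width is of order $x_0$, the constant is of order $x_0^\gamma$ and need not be smaller than $1$; a single fixed point ``with constant uniform in $\delta$'' therefore does not follow from the estimate you invoke. The paper resolves this by iterating Banach--Picard on successive small intervals $[x_0-(k+1)\delta,x_0-k\delta]$, where only the first interval touching the singular point requires the exponent $\alpha=\f{\lambda+B(x_0)-d'(x_0)}{-d'(x_0)}>0$ together with \eqref{Bandk}, the later ones being standard because $V-d$ is bounded away from zero there; alternatively a weighted (Bielecki-type) norm would work, but you would have to set it up explicitly. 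With that correction, your treatment of the third bullet, including sign preservation along the leftward characteristics and the final renormalisation, matches the paper's proof.
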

\begin{proof} We present a brief proof because most of the fundamental estimates are proved later with a uniform dependency on $R$ that is not needed here.
\\[5pt]
{\em Step 1.}
The proof for the limit is the same as for instance in~\cite{DG}, end of Appendix B. 
In the compact $[x_0,R]$, we can prove uniform bounds in $L^\infty$ for $U_V^{\ep,R}$, take a subsequence converging weakly in $L^\infty([x_0,R])$, and the equation taken in a weak sense follows from the strong convergence of the coefficients in $L^1$. Moreover, we have $(V-d(x))U_V^{R}\in W^{1,\infty}([x_0,R])$ thanks to our assumptions on the coefficients so that $U_V^R$ is continuous on $(x_0,R)$.
\\[5pt]
{\em Step 2.}
The continuity of $V \mapsto \lambda_V^{R}$ can be obtained following the proof of Lemma 3.1. in~\cite{M2}. If $V\to d(0),$ we have $x_0\to 0:$ we are back to the usual - truncated - growth-fragmentation equation for which the strict positivity of the eigenvalue for sufficiently large $R$  has been established~\cite{DG,BP}. 
\\[5pt]
{\em Step 3.}
We restrict now ourselves to \comJuan{the set of values $V$ such that}
$\lambda_{ V}^{R} >-B (x_0)+d'(x_0)$, 
\comJuan{which holds true} 
at least for $x_0=d^{-1}(V)$ small enough (\comJuan{that is, $V\sim d(0)$}) following { the step 2. This is necessary to extend the solution at $x=x_0$ as we see it right below.} For such an $x_0$,
we extend our solution $U^{R}_{ V}$ to $[0,x_0]$ by applying successively a Banach-Picard fixed point theorem  to intervals $[x_0-(k+1)\delta, x_0-k\delta]$ on spaces ${\cal C}\big( [x_0 -(k+1)\delta, x_0-k\delta] \big)$, $k\in \N$, for $\delta$ small enough so that the following operator $m\to n$ is a contraction:
$$\left\{\begin{array}{l}
\f{\p}{\p x} (( V-d(x))n) +\lambda_{V}^{R} n  +B (x) n =2\int_x^{x_0 -k\delta} B(y) m (y)\kappa(y,x) dy +F_k(x),
\\[10pt]
{ n(x_0-k\delta)=U_{ V}^{R} (x_0-k\delta)}, 
\end{array} \right.$$
with $F_k$ defined in the previous steps of the iteration by
$$
F_k(x):=\int_{x_0-k\delta}^R U_{ V}^{R} (y) B(y) \kappa(y,x) dy>0.
$$
{ In these iterations, the most difficult point is the extension for $x<x_0$ close to the singular point  $x_0$, which is possible thanks to the absorption stemming from the choice $\lambda_{ V}^{R} >-B (x_0)+d'(x_0)$.} \comJuan{Let us show that there is some $\delta>0$ small enough such that the map $m\rightarrow n$ is contractive in ${\cal C}\big( [x_0 -\delta, x_0] \big)$, which provides the aforementioned extension}. {  Denoting $n=n_1-n_2$ the difference between the solutions for $m_1$ and $m_2$ respectively, and $m=m_1-m_2,$ and $I(m)=2\int\limits_x^{x_0} B(y)m(y)\kappa(y,x)dy,$ we get, for any $\bar x<x<x_0$
$$\biggl(n(x)\exp (\int_{\bar x}^x \f{\lambda +B(s)-d'(s)}{V-d(s)}ds\biggr)'=\exp (\int_{\bar x}^x \f{\lambda +B(s)-d'(s)}{V-d(s)}ds) \f{I(m)(x)}{V-d(x)},
$$
so that by integration between $x$ and $x_0$ we have
$$\begin{array}{l}n(x)\exp (\int_{\bar x}^x \f{\lambda +B(s)-d'(s)}{V-d(s)}ds)=
 -\int\limits_x^{x_0}
\exp (\int_{\bar x}^s \f{\lambda +B(\sigma)-d'(\sigma)}{V-d(\sigma)}d\sigma) \f{ I(m)(s)}{V-d(s)}ds.
\end{array}
$$
Let us now do an asymptotic expansion around $x_0$ to check the validity of the integral: we have 
$$\f{\lambda +B(x)-d'(x)}{V-d(x)}\sim_{x_0} \f{\lambda+B(x_0)-d'(x_0)}{-d'(x_0)(x-x_0)}=\f{\alpha}{x-x_0},$$
with $\alpha>0$ thanks to our assumption $\lambda +B(x_0)-d'(x_0)>0,$ hence
$$\exp (\int_{\bar x}^{x} \f{\lambda +B(s)-d'(s)}{V-d(x)}  ds) \sim \vert \f{x_0-x}{x_0-\bar x} \vert^\alpha, $$
and finally we get
$$\vert n(x) (\f{x_0-x}{x_0-\bar x})^\alpha \vert \leq C\| m\|_{L^\infty} \int\limits_x^{x_0} (x_0-s)^{\alpha-1}ds,$$
with $C$ only depending on the parameters of the problem.} 

\comJuan{
Once we have the expansion for the exponential term we plug it into the former equality to get
$$\vert n(x) (\f{x_0-x}{x_0-\bar x})^\alpha \vert \sim C_1 \int\limits_x^{x_0} \vert \f{x_0-s}{x_0-\bar x} \vert^\alpha\frac{I(m)(s)}{V-d(s)}ds \sim \frac{C_2}{|x_0-\bar x|^\alpha} \int\limits_x^{x_0} |x_0-s|^{\alpha-1}I(m)(s)ds$$
where by the same token we used that $V-d(s) \sim -d'(x_0) (s-x_0)$. Hence
$$
|n(x)| \le \frac{C_3\|m\|_\infty}{|x_0-x|^\alpha}  \int\limits_x^{x_0} |x_0-s|^{\alpha-1}\int\limits_s^{x_0} B(y)\kappa(y,s)dy\,ds
$$
$$
\le \frac{C_3\|m\|_\infty B_M}{|x_0-x|^\alpha}  \int\limits_x^{x_0} \int\limits_x^{y} |x_0-s|^{\alpha-1} \kappa(y,s)ds\,dy\:.
$$
Here we use \eqref{Bandk} to bound the innermost integral. We consider first the case $0<\alpha\le 1$: there holds that
$$
\int\limits_x^{x_0} \int\limits_x^{y} |x_0-s|^{\alpha-1} \kappa(y,s)ds\,dy \le C \int\limits_x^{x_0} \frac{|y-x_0|^{\alpha-1}|y-x|^\gamma}{y^\gamma}\, dy
$$
and as a consequence
$$
|n(x)| \le  \frac{C_3\|m\|_\infty B_M}{|x_0-x|^\alpha x^\gamma} \frac{|x-x_0|^\alpha}{\alpha} |x_0-x|^\gamma \le C_4 \delta^\gamma \|m\|_\infty.
$$
This shows that the map $m\rightarrow n$ is contractive in ${\cal C}\big( [x_0 -\delta, x_0] \big)$ for $\delta$ small enough. When $\alpha >1$ we argue in a similar fashion that 
$$
\int\limits_x^{x_0} \int\limits_x^{y} |x_0-s|^{\alpha-1} \kappa(y,s)ds\,dy \le C \int\limits_x^{x_0} \frac{|x_0-x|^{\alpha-1}|y-x|^\gamma}{y^\gamma}\, dy
$$
which in the end allows us to recover the same type of contractivity estimate.
}

Finally, we renormalize $U_V^{R}$ by multiplication to achieve 
$\int_0^R U_V^{R} (x)dx=1.$ 
\end{proof}


\begin{Theorem}\label{th:estim}
With the assumptions and notations of Theorem~\ref{th:truncated}, for $R$ large enough there exists $\bar V \in (\inf_x d(x), d(0))$ such that $\lambda_{\bar V}^R=0$, that means 
\begin{equation}
\label{VeqE:steady:trunc}
 \bar V  = \int_{0}^R d(x)U_R(x)dx, \qquad \int_0^R U_R(x) dx=1,
\end{equation}
\begin{equation}
\label{UeqE:steady:trunc}
\frac{\partial}{\partial x} ((\bar V-d(x)) U_R)  = -B(x) U_R(x)   + 2 \int_x^R B(y) \kappa (y,x) U_R(y)\, dy.
\end{equation}
Moreover, the following estimates, independent of $R>0$ large enough, hold.
\begin{enumerate}
\item For $k\geq 0,$ there exists $C_k>0$  such that 
$$
\int_0^R B(x) x^k U_R(x) dx \le C_k.
$$
\item With $\gamma$ defined in the assumption~\eqref{Bandk}, there exists $C>0$ such that for any $x\in (0, R)$ we have
$$
\vert \bar V - d(x) \vert U_R(x) \leq C, \qquad \vert \bar V - d(x) \vert U_R(x) \vert x-x_0\vert^{-\gamma} \leq C .
$$
\item There exists $\eta>0$ such that
$$
 \inf_{x\geq 0} d(x)<d(\infty) +\eta \leq \bar V \leq d(0)-\eta < d(0).
 $$
\end{enumerate}
\end{Theorem}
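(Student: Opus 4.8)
The plan is to combine a continuity/intermediate-value argument (to produce $\bar V$ with $\lambda_{\bar V}^R=0$) with a set of a priori moment and pointwise estimates on $U_R$ that are uniform in the truncation parameter $R$. First I would address the existence of $\bar V$: by Theorem~\ref{th:truncated} the map $V\mapsto \lambda_V^R$ is continuous on a neighbourhood of $d(0)$ and is strictly positive for $V$ close to $d(0)$ (and $R$ large). To get the opposite sign for $V$ close to $\inf_x d(x)$, I would test the truncated eigenvalue equation~\eqref{eq:truncated} against the weight $x$ (or argue with the total mass): integrating and using the conservative property~\eqref{customary} of $\kappa$, the fragmentation terms cancel in the first moment, so $\lambda_V^R \int_0^R x\,U_V^R\,dx = \int_0^R (V-d(x))U_V^R\,dx$, and when $V<d(x)$ for (almost) all $x\in(x_0,R)$ this right-hand side is negative. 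Since $x_0=d^{-1}(V)\to\sup\{x:\ d(x)\ge\inf d\}$ as $V\downarrow\inf d$, the interval $(x_0,R)$ shrinks and one checks $\lambda_V^R<0$ there. Continuity then yields $\bar V\in(\inf_x d(x),d(0))$ with $\lambda_{\bar V}^R=0$; renaming $U_R:=U_{\bar V}^R$ and reading off the stationary equation gives \eqref{VeqE:steady:trunc}--\eqref{UeqE:steady:trunc} (the first of these is exactly the $\lambda=0$ compatibility condition obtained by integrating \eqref{UeqE:steady:trunc} against $d(x)-\bar V$, or simply from the $V$-equation at equilibrium).

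Next I would establish the moment bounds in item~1. Testing \eqref{UeqE:steady:trunc} against $x^{k+1}$ and integrating by parts, the boundary terms at $R$ are controlled because $(\bar V-d(x))<0$ there (the flux points inward), and using the kernel identity $2\int_0^y x^{k+1}\kappa(y,x)\,dx \le 2^{-k}y^{k+1}$ for $k\ge 1$ (and the mass-preserving identity for $k=0,1$), the fragmentation gain term is dominated by $(2^{-k}-1)\int_0^R B(y)y^{k+1}U_R\,dy$, which has a favourable sign for $k\ge 1$. The transport term $\int_0^R (k+1)x^k(\bar V-d(x))U_R\,dx$ is bounded by $(k+1)\bar V M_k + (k+1)\|d\|_\infty M_k$ using the already-obtained lower-order moments, so one closes an induction: $\int_0^R B(x)x^{k+1}U_R\,dx \le C_{k+1}$ with $C_{k+1}$ independent of $R$, the base case $k=0$ coming from $\int_0^R B(x)U_R\,dx = \int_0^R B(x)U_R\,dx$ balanced against the normalization $\int_0^R U_R=1$ together with $B_M<\infty$ from~\eqref{Brates:3} (for the lower tail) and from the flux balance for the part $x\ge\mathcal{A}$.

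For item~2, I would integrate \eqref{UeqE:steady:trunc} from $x$ to $R$: since the right-hand side is $-B(x)U_R(x)+2\int_x^R B\kappa U_R$ and the gain term is, after swapping the order of integration, bounded by $2\int_x^R B(y)U_R(y)\big(\int_x^y\kappa(y,z)\,dz\big)\,dy$, the second assumption in~\eqref{Bandk} furnishes $\big|\int_x^{x_0}\kappa(y,z)\,dz\big|\le C|x-x_0|^\gamma/y^\gamma$, which combined with $B(y)y^{-\gamma}\in L^\infty$ (first line of~\eqref{Bandk}) and the moment bound $\int_0^R B(y)U_R\,dy\le C_0$ yields $|(\bar V-d(x))U_R(x)|\le C$ and the sharper $|(\bar V-d(x))U_R(x)|\,|x-x_0|^{-\gamma}\le C$; the flux $(\bar V-d(x))U_R(x)$ itself vanishes suitably at $R$ so the integration constant is controlled uniformly.

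Finally, item~3: the upper bound $\bar V\le d(0)-\eta$ follows from \eqref{VeqE:steady:trunc}, namely $\bar V=\int_0^R d(x)U_R\,dx < d(0)$ because $d$ is strictly decreasing and $U_R$ cannot concentrate entirely at $0$ — here I would use the lower bound $a_k(x)\ge c>0$ from~\eqref{as:frag} together with the moment estimates (which prevent mass escaping to $+\infty$ as well) to get a uniform amount of mass on a fixed interval $[\delta_1,\delta_2]\subset(0,\infty)$, forcing $\bar V\le d(0)-\eta$ with $\eta$ independent of $R$; the decay condition $d(x)-d(\infty)\ge C_d x^{-n}$ in~\eqref{as:d} gives the matching lower bound $\bar V\ge d(\infty)+\eta$ by the same mass-localization argument applied at large $x$. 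The main obstacle I anticipate is making the constants in items~1--2 genuinely independent of $R$ while handling the boundary terms at $x=R$ (where the transport speed $\bar V-d(x)$ is nonzero and negative); the right-signs argument for the inward flux at $R$ and the careful use of~\eqref{Bandk} to absorb the singular weight $|x-x_0|^{-\gamma}$ near the degeneracy point $x_0$ are the delicate parts — once those are in place, the intermediate-value step producing $\bar V$ is comparatively routine.
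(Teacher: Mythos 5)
Your overall architecture (intermediate value theorem for $\bar V$, moment bounds, flux integration for the pointwise estimates, non-concentration for the $\eta$-bounds) is the same as the paper's, but the key mechanisms you propose contain genuine errors. First, the existence step: your sign analysis is backwards. Since $d$ is strictly decreasing and $x_0=d^{-1}(V)$, one has $V>d(x)$ on $(x_0,R)$, not $V<d(x)$, so the first-moment identity does not make the right-hand side negative; moreover, by \eqref{as:d} we have $d(x)>d(\infty)=\inf d$ for every finite $x$, so $x_0\to+\infty$ as $V\downarrow\inf d$ (it does not tend to a finite supremum), and the interval $(x_0,R)$ does not shrink — indeed for fixed $R$ the problem only makes sense for $V>d(R)$. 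Your identity also omits the outflux term $R\,(V-d(R))\,U_V^R(R)\ge 0$ at $x=R$. The paper instead argues by contraposition: if $\lambda_V^R\ge 0$, the first moment gives $V\ge\int_0^R d(x)U_V^R(x)\,dx$, and the uniform moment bounds (valid whenever $\lambda\ge 0$), together with $B\ge B_m$ on $x\ge{\mathcal A}$ from \eqref{Brates:2} and $d(A)\ge d(\infty)+C_dA^{-n}$ from \eqref{as:d} with a choice $k>n$, force $V\ge d(\infty)+\eta$ with $\eta$ independent of $R$; hence for $R$ large, any $V\in(d(R),d(\infty)+\eta)$ has $\lambda_V^R<0$, and continuity plus positivity near $d(0)$ yields $\bar V$. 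This same step is what gives the lower bound in item 3.

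Second, item 1: your moment estimate rests on the inequality $2\int_0^y x^{k+1}\kappa(y,x)\,dx\le 2^{-k}y^{k+1}$, which does not follow from \eqref{customary} and is false in general — the kernel $\kappa(y,\cdot)=\tfrac12(\delta_0+\delta_y)$ satisfies \eqref{customary} but gives $2\int_0^y x^{k+1}\kappa(y,x)\,dx=y^{k+1}$. The genuine source of moment dissipation is assumption \eqref{as:frag}, $a_k(x)\ge c>0$, which is exactly how the paper proceeds (test with $x^k$, obtain $c\int_0^R Bx^kU\le k\bar V\int_0^R x^{k-1}U$, then absorb the tail via $B\ge B_m$ for $x\ge{\mathcal A}$); note also that the flux at $x=R$ is outward, since $\bar V>d(R)$ when $x_0<R$, not inward as you claim — dropping it is still legitimate, but for the opposite reason. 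Third, item 3: to get $\bar V\le d(0)-\eta$ you must exclude concentration of $U_R$ near $x=0$, and moment bounds together with $a_k\ge c$ cannot do this (moments control the tail at infinity, not a Dirac at the origin). The paper obtains it from the weighted estimate of item 2 combined with $|d'|\ge L$ on $(0,x_{\min})$, which yields $U_R(x)\le C|x-x_0|^{\gamma-1}\in L^1$ near $0$, hence $\int_0^\delta U_R<\tfrac12$ for $\delta$ small and $\bar V\le\tfrac{d(0)+d(\delta)}2$. Relatedly, in item 2 the stationary equation should be integrated from $x_0$ (where the flux vanishes because $d(x_0)=\bar V$, as the limit of the $\ep$-boundary condition), not from $R$, where the flux is a nonzero outflux; this is also where \eqref{Bandk} is tailored to apply.
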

\begin{proof}
{\em Step 1. A lower bound on $V$ such that $\lambda_V^R\geq 0$.}
\\
In this step, we assume that $\lambda_V^R\geq 0$. On the one hand, integrating the equation against the weight $x$ gives
$$
V\geq \int_0^R d(x)U_V^{R} (x) dx,
$$
which in turns implies (dropping superscript $R$ and subscript $V$)
$$
V\geq \int_0^A d(x) U(x) dx\geq d(A) (1-\int_A^R U(x)dx) \geq d(A)\big(1-\int_A^R \f{x^k}{A^k} U(x)dx\big).
$$

On the other hand, integrating the equation against the weight $x^k$ gives (see details below concerning this a priori estimate),
$$
\int_0^R x^k B(x) U(x) dx \leq C_k
$$
so that, using assumption~\eqref{Brates:2}, for $A >{\cal A}$, we have 
$$
\int_A^R \f{x^k}{A^k} U(x)dx \leq \int_A^R \f{x^k}{A^k} \f{B(x)}{B_m} U(x)dx\leq \f{C_k}{A^k B_m}.
$$

Finally,  we  use~\eqref{as:d}  so that 
$$
 V \geq \big[ d(\infty) +\f{C_d}{A^n} \big] \big[ 1-  \f{C_k}{A^k B_m} \big] .
$$
Now, we choose a value  $k>n$, then choosing $A$ large enough, we obtain that for some $\eta >0$ (depending only on our assumptions on the coefficients $b$, $d$, $\kappa$ but not on $R$), we have
\begin{equation}
\label{lower_V}
 V \geq d(\infty) +\eta > \inf_{x\geq 0} d(x).
\end{equation}
In particular, this inequality holds true for $\bar V$ as built below. 
\\[5pt]
{\em Step 2. Existence of the steady state $(\bar V, U_{\bar V}^R)$.}
\\
We have already seen that $\lambda_V^R$ is a continuous function, positive for $V$ sufficiently close to $d(0)$. Therefore,  to prove that $\lambda_V^R$ vanishes,  it remains to prove that $\lambda_V^{R}$ may be negative.
Assume by contradiction, that for any $V\in (\inf d, d(0))$ we have $\lambda_V^R\geq 0,$  and take $V \to \inf_x d(x)$ in Theorem~\ref{th:truncated} - It is possible to extend $U_V^R$ on $[0,R]$ since the assumption $\lambda_{ V}^{R} >-B (x_0)+d'(x_0)$ is then satisfied for all $V$. We use  the estimate~\eqref{lower_V}  to obtain a contradiction and thus conclude the existence of $\bar V$.
\\

We recall that integrating Equation~\eqref{UeqE:steady:trunc} successively against $1$ and $x$ yields
$$
0\leq (d(0) - \bar V) U(0)=\int_0^R B(x) U_R(x) dx \leq B_M,
$$
$$
\inf_{x\geq 0} d(x) +\eta \leq \bar V = \int_0^R d(x) U_R(x) dx \leq d(0),
$$
so that we find \eqref{VeqE:steady:trunc} and $d(0)\geq \bar V$ and a boundary condition at $x=0$ is not needed.
\\[5pt]
\noindent{\em Step 3. Moments of $B(x)U(x)$.}
\\
 We drop the index $R$ for simplicity. 
Integrating Equation~\eqref{UeqE:steady:trunc}  against $x^k$ yields, using the notation in~\eqref{as:frag},
$$
-k \int_0^R (\bar V - d(x)) x^{k-1} U(x) dx =  - \int_0^R x^k B(x) U(x) a_k (x) dx 
$$
with $a_k(x) \geq c>0$. Hence for $A>{\cal A}$ large enough, $\cal A$ being defined in \eqref{Brates:2}
$$\begin{array}{rl}
c \int_0^R B(x) x^k U(x) dx &\leq  k \bar  V \int_0^R x^{k-1} U(x) dx
 \\ \\
& \le k d(0) \int_0^A x^{k-1} U(x)\, dx + \frac{1}{B_m A}\int_0^R B(x) x^k U(x) dx
\end{array}
$$
and thus
$$
\left( c  -\frac{1}{B_mA} \right) \int_0^R B(x) x^k U(x) dx \le k d(0) \int_0^A x^{k-1} U(x)\, dx \le k d(0) A^{k-1},
$$
so that finally for $A$ large enough
$$
\int_0^R B(x) x^k U(x) dx \le \frac{B_m k d(0) A^k}{c B_m A-1}=C_k.
$$
\\[5pt]
\noindent{\em Step 4. $L^\infty$ bound for $(\bar V - d(x))U(x)$ and, for $x_0<x_{\min}$ defined in~\eqref{Bandk}, for $(\bar V - d(x))U(x) \vert x-x_0\vert^{-\gamma}$.}
\\
We integrate~\eqref{UeqE:steady:trunc} between $x_0:=d^{-1}(\bar V)$ and $x$ to find
$$
\vert \bar V - d(x) \vert U(x) +\big \vert \int_x^{x_0} B(y)U(y)dy \big\vert = 2 \big\vert \int_x^{x_0} \int_z^R B(y) U(y) \kappa(y,z) dy dz \big \vert
$$ 
so that
$$
\vert \bar V - d(x) \vert \, U(x) \leq 2 \big \vert \int_x^{x_0} \int_z^R B(y) U(y) \kappa(y,z) dy dz \big \vert \leq 2B_M.
$$

As right above, we integrate between $x_0<x_{\min}$ and $x$ to find
$$\begin{array}{rl}
\vert \bar V - d(x) \vert \, U(x) &\leq \vert 2\int_x^{x_0} \int_z^R B(y) U(y) \kappa(y,z) dy dz\vert 
\\[15pt]
& \leq 2 C  \int_0^R B(y) U(y) y^{-\gamma} \vert x-x_0\vert^\gamma dy 
\end{array}
$$
thanks to~\eqref{Bandk}, and thus
\begin{equation}
\label{estU_closex0}
\vert \bar V - d(x) \vert \; U(x) \leq  2C \Vert B(y)y^{-\gamma}\Vert_{L^\infty}  \vert x-x_0\vert^\gamma .
\end{equation}
\\
{\em Step 5.  Upper bound for $\bar V$ and bounds on $x_0$.} 
Because we have already proved the lower bound~\eqref{lower_V} on $\bar V$, we know that  $x_0:=d^{-1}(\bar V)$ remains bounded from above. Ensuring now an upper bound strictly smaller than $d(0)$ for $\bar V$, uniformly as $R \to \infty$, will ensure a lower bound for $x_0$. Let us assume $x_0<x_{\min}$.

We have
$$\bar V \leq  d(0) \int_0^\delta U(x)dx + d(\delta) \int_\delta^R U(x)dx= d(\delta) + (d(0)-d(\delta)) \int_0^\delta U(x)dx,$$
so that it remains  to show that the mass does not concentrate only around zero. 


Using assumption~\eqref{as:d:dec}, for $x\in (0, x_{\min})$, we have $|d'(x)| \geq L$ for some constant $L$. Therefore, we have
$$
L \; \vert x-x_0\vert \, U(x) \leq  \vert \bar V - d(x)\vert \, U(x) \leq C \vert x-x_0\vert^\gamma
$$ 
where we have used~\eqref{estU_closex0}. We conclude that
$$
U(x)\leq \vert x-x_0\vert^{\gamma-1} \in L^1((0, x_0+1)).
$$
Hence for some $\delta$ small enough we have $\int_0^\delta U(x)dx <\f{1}{2}$ so that
$$\bar V \leq \f{d(\delta)+d(0)}{2} < d(0),$$
and $x_0>\min(x_{\min},d^{-1}(\f{d(\delta)+d(0)}{2} ))$ remains away from zero.
\end{proof}

The estimates of Theorem~\ref{th:estim} are sufficient to let $R\to\infty$  (see once again~\cite{BP} Theorem 4.6. or the estimates for the proof of ~\cite{DG} Theorem 1). { Thanks to the second estimate, the function $U_R$ is uniformly bounded by $\f{C  \vert x-x_0\vert^\gamma}{\bar V-d(x)}$ which is locally integrable around $x_0$. Therefore,  together with the first estimate which guarantees that  $x^k U_R$ is uniformly in $L^1$ for large $x,$ we infer that $\int x^k U_R(x)dx$ is bounded in $L^1$. Thanks to the Dunford-Pettis theorem, we conclude that $(U_R)$ belongs to a  weak compact set of $L^1$. We can extract a subsequence  which converges $L^1-$ weak towards a certain $U.$  Finally, we can apply the chain rule to the equation (see Eq. (28) in~\cite{DG}) and find that for $k\geq 1$ we conclude that $(x^k U_R)$ is bounded in $W^{1,1},$ so that the strong convergence is proved.}
\\

This concludes the proof of Theorem~\ref{th:steady}.

\section*{Conclusion}

We have investigated, in a systematic way, the polymer size distributions resulting from the interplay between polymerization, depolymerization, fragmentation and nucleation. 

 For the { variant of the classical} Lifshitz-Slyozov model { considered in this paper,} the proofs of convergence are based on two entropy inequalities, which unfortunately fail to be satisfied when adding fragmentation into the system.  Then, the existence proof of a steady state for the growth-decay-fragmentation equation relies on \emph{a priori} estimates, in the same spirit as for the eigenvalue problem of the growth-fragmentation equation, but with the delicate question of a change in the sign of the transport rate.

Our last system shows that { a  steady distribution of fibrils} may  be obtained by a polymerizing-depolymerizing-fragmenting system. Another possibility would be to consider the second term in the asymptotic development for the polymerization and depolymerization reactions, making a diffusion term appear~\cite{ColletHariz}. Indeed \cite{Laurencot2004}  proved the existence of steady states for diffusion-fragmentation equations. Up to our knowledge, this is however the only existing study in this direction. 

Concerning the applications to biology, our study was motivated by \emph{in vitro} experiments of fibril formation~\cite{Radford}, since  most protein fibrils formed \emph{in vitro}  appear to be stable. However, the  stability of fibrils may also be linked to a kind of pseudo-stability, in the spirit of metastable states for Becker-D\"oring~\cite{Penrose1989}, \emph{i.e.} where a very slow degradation would underlie the observed stability : a numerical study could help to answer this more quantitative question.

Further studies could be to generalize the assumptions on the coefficients, and find assumptions which would guarantee uniqueness of a steady state.  Still more challenging is the question of the stability of the steady state, which is also an open problem for the so-called prion system. 

\subsubsection*{Acknowledgments}
Marie Doumic was supported and Juan Calvo was partially supported by the ERC Starting Grant SKIPPER$^{AD}$ (number 306321). Juan Calvo also acknowledges support from``Plan Propio de Investigaci\'on, programa 9'' (funded by Universidad de Granada and european FEDER funds), Projects MTM2014-53406-R, MTM2015-71509-C2-1-R (funded by MINECO and european FEDER funds) and Project P12-FQM-954 (funded by Junta de Andaluc\'ia). Beno\^it Perthame was supported by the ERC Advanced Grant Adora (740623). We thank Romain Yvinec and Wei-Feng Xue for inspiring discussions.


%


\begin{thebibliography}{10}

\bibitem{Ball-Becker-Doring}
J.~M. Ball, J.~Carr, and O.~Penrose.
\newblock The {B}ecker-{D}\"oring cluster equations: basic properties and
  asymptotic behaviour of solutions.
\newblock {\em Comm. Math. Phys.}, 104(4):657--692, 1986.

\bibitem{BeckerDoring_1935}
R.~Becker and W.~D\"oring.
\newblock Kinetische behandlung der keimbildung in übersättigten dämpfen.
\newblock {\em Annalen der Physik}, 416(8):719--752, 1935.

\bibitem{Bishop}
M.F. Bishop and F.A. Ferrone.
\newblock Kinetics of nucleation-controlled polymerization. a perturbation
  treatment for use with a secondary pathway.
\newblock {\em Biophysical Journal}, 46(5):631 -- 644, 1984.

\bibitem{CDG}
V.~Calvez, M.~Doumic, and P.~Gabriel.
\newblock Self-similarity in a general aggregation–fragmentation problem.
  application to fitness analysis.
\newblock {\em Journal de Mathématiques Pures et Appliquées}, 98(1):1 -- 27,
  2012.

\bibitem{CL2}
V.~Calvez, N.~Lenuzza, M.~Doumic, J.-P. Deslys, F.~Mouthon, and B.~Perthame.
\newblock Prion dynamic with size dependency - strain phenomena.
\newblock {\em J. of Biol. Dyn.}, 4(1):28--42, 2010.

\bibitem{Collet}
J.-F. Collet, T.~Goudon, F.~Poupaud, and A.~Vasseur.
\newblock The {B}ecker--{D}\"{o}ring system and its {L}ifshitz--{S}lyozov
  limit.
\newblock {\em SIAM J. on Appl. Math.}, 62(5):1488--1500, 2002.

\bibitem{Collet2000}
Jean-Fran{\c{c}}ois Collet and Thierry Goudon.
\newblock On solutions of the {L}ifshitz-{S}lyozov model.
\newblock {\em Nonlinearity}, 13(4):1239--1262, 2000.

\bibitem{Collet2002}
Jean-Fran{\c{c}}ois Collet, Thierry Goudon, and Alexis Vasseur.
\newblock Some remarks on large-time asymptotic of the {L}ifshitz-{S}lyozov
  equations.
\newblock {\em J. Statist. Phys.}, 108(1-2):341--359, 2002.

\bibitem{ColletHariz}
J.F. Collet and S.~Hariz.
\newblock A modified version of the lifshitz-slyozov model.
\newblock {\em Applied Mathematics Letters}, 12:81--95, 1999.

\bibitem{Deschamps2017}
Julien Deschamps, Erwan Hingant, and Romain Yvinec.
\newblock Quasi steady state approximation of the small clusters in
  becker{\textendash}d\"{o}ring equations leads to boundary conditions in the
  lifshitz{\textendash}slyozov limit.
\newblock {\em Communications in Mathematical Sciences}, 15(5):1353--1384,
  2017.

\bibitem{DG}
M.~Doumic and P.~Gabriel.
\newblock Eigenelements of a general aggregation-fragmentation model.
\newblock {\em Mathematical Models and Methods in Applied Sciences},
  20(05):757, 2009.

\bibitem{Doumic2009}
M.~Doumic, T.~Goudon, and T.~Lepoutre.
\newblock Scaling {L}imit of a {D}iscrete {P}rion {D}ynamics {M}odel.
\newblock {\em Comm. in Math. Sc.}, 7(4):839--865, 2009.

\bibitem{EscoMischler2}
M.~Escobedo, S.~Mischler, and B.~Perthame.
\newblock Gelation in coagulation and fragmentation models.
\newblock {\em Comm. Math. Phys.}, 231(1):157--188, 2002.

\bibitem{FL3}
F.~Filbet and P.~Lauren{\c{c}}ot.
\newblock Numerical approximation of the {L}ifshitz-{S}lyozov-{W}agner
  equation.
\newblock {\em SIAM J. Numer. Anal.}, 41(2):563--588 (electronic), 2003.

\bibitem{Gabriel_2015}
Pierre Gabriel.
\newblock {Global stability for the prion equation with general incidence}.
\newblock January 2015.

\bibitem{Greer}
M.L. Greer, L.~Pujo-Menjouet, and G.F. Webb.
\newblock A mathematical analysis of the dynamics of prion proliferation.
\newblock {\em J. Theoret. Biol.}, 242(3):598--606, 2006.

\bibitem{Laurencot2001}
P.~Lauren\c{c}ot.
\newblock Weak solutions to the lifshitz--slyozov--wagner equation.
\newblock {\em Indiana University Mathematical Journal}, 50:1319--1346, 2001.

\bibitem{Laurencot2007}
P.~Lauren{\c{c}}ot and C.~Walker.
\newblock Well-posedness for a model of prion proliferation dynamics.
\newblock {\em J. Evol. Equ.}, 7(2):241--264, 2007.

\bibitem{Laurencot2004}
Philippe Lauren{\c{c}}ot.
\newblock Steady states for a fragmentation equation with size diffusion.
\newblock {\em Banach Center Publications}, 66:211--219, 2004.

\bibitem{LifshitzSlyozov_1961}
I.M. Lifshitz and V.V. Slyozov.
\newblock The kinetics of precipitation from supersaturated solid solutions.
\newblock {\em Journal of Physics and Chemistry of Solids}, 19(1–2):35 -- 50,
  1961.

\bibitem{M2}
P.~Michel.
\newblock Optimal proliferation rate in a cell division model.
\newblock {\em Math. Model. Nat. Phenom.}, 1(2):23--44, 2006.

\bibitem{Michel2008}
P.~Michel.
\newblock A singular asymptotic behavior of a transport equation.
\newblock {\em C.R. Acad. Sci. Paris}, 346:155--159, 2008.

\bibitem{Niethammer}
B.~Niethammer.
\newblock Derivation of the {LSW}-theory for {O}stwald ripening by
  homogenization methods.
\newblock {\em Arch. Ration. Mech. Anal.}, 147(2):119--178, 1999.

\bibitem{NP4}
B.~Niethammer and R.L. Pego.
\newblock Non-self-similar behavior in the {LSW} theory of {O}stwald ripening.
\newblock {\em J. Statist. Phys.}, 95(5-6):867--902, 1999.

\bibitem{NP3}
B.~Niethammer and R.L. Pego.
\newblock On the initial-value problem in the {L}ifshitz-{S}lyozov-{W}agner
  theory of {O}stwald ripening.
\newblock {\em SIAM J. Math. Anal.}, 31(3):467--485 (electronic), 2000.

\bibitem{NP2}
B.~Niethammer and R.L. Pego.
\newblock The {LSW} model for domain coarsening: asymptotic behavior for
  conserved total mass.
\newblock {\em J. Statist. Phys.}, 104(5-6):1113--1144, 2001.

\bibitem{NV1}
B.~Niethammer and J.~J.~L. Vel{\'a}zquez.
\newblock On the convergence to the smooth self-similar solution in the {LSW}
  model.
\newblock {\em Indiana Univ. Math. J.}, 55(2):761--794, 2006.

\bibitem{Penrose1989}
Oliver Penrose.
\newblock Metastable states for the becker-d{\"o}ring cluster equations.
\newblock {\em Communications in Mathematical Physics}, 124(4):515--541, Dec
  1989.

\bibitem{BP}
B.~Perthame.
\newblock {\em Transport equations in biology}.
\newblock Frontiers in Mathematics. Birkh\"auser Verlag, Basel, 2007.

\bibitem{Prigent}
S.~Prigent, A.~Ballesta, F.~Charles, N.~Lenuzza, P.~Gabriel, L.M. Tine,
  H.~Rezaei, and M.~Doumic.
\newblock An efficient kinetic model for assemblies of amyloid fibrils and its
  application to polyglutamine aggregation.
\newblock {\em PLoS ONE}, 7(11):e43273, 11 2012.

\bibitem{Pruss}
J.~Pruss, L.~Pujo-Menjouet, G.F. Webb, and R.~Zacher.
\newblock Analysis of a model for the dynamics of prion.
\newblock {\em Dis. Cont. Dyn. Sys. Ser. B}, 6(1):225--235, 2006.

\bibitem{Simonett}
G.~Simonett and C.~Walker.
\newblock On the solvability of a mathematical model for prion proliferation.
\newblock {\em J. Math. Anal. Appl.}, 324(1):580, 2006.

\bibitem{Radford}
W-F Xue, S~W Homans, and S~E Radford.
\newblock Systematic analysis of nucleation-dependent polymerization reveals
  new insights into the mechanism of amyloid self-assembly.
\newblock {\em PNAS}, 105:8926--8931, 2008.

\end{thebibliography}
\end{document}